\theoremstyle{plain}
\newtheorem{theorem}{Theorem}[section]
\newtheorem{lemma}[theorem]{Lemma}
\newtheorem{proposition}[theorem]{Proposition}
\theoremstyle{definition}
\theoremstyle{remark}
\newtheorem{remark}[theorem]{Remark}
\title{The Classification of Epimorphisms from $A(\widetilde{A}_{n})$
to $W(\widetilde{A}_{n})$}
\author{Nuno Franco \\ \small
CIMA-UE and Universidade de \'{E}vora, Departamento de Matem\'{a}tica \\ \small  R. Rom\~{a}o Ramalho, 59, 7000 \'{E}vora, Portugal \\ \small  nmf@uevora.pt}
\begin{document}

\maketitle

\begin{abstract}
We present a complete classification for the epimorphisms from the affine Artin-Tits groups $A(\widetilde{A}_{n})$
to the corresponding Coxeter groups $W(\widetilde{A}_{n})$, for $n\geq 1$.
\end{abstract}

\bigskip

\noindent Keywords: Affine Artin-Tits groups; Coxeter groups; Classification problem
\noindent MSC classes: 20F36

\section{Introdution}

In 1947 Artin, in \cite{artin1}, raised the
problem of
determining all automorphisms of the braid groups and in another paper
\cite{artin}
in the same year he began a solution,  part of which involved
determining
epimorphisms to the quotient Coxeter group. An important point he
established
was that the kernel of the mapping to the Coxeter group is
characteristic.

More recently this program has been extended to some Artin-Tits groups
in the
papers of Cohen and Paris \cite{pariscohen} and Franco and Paris \cite{francoparis}. Both papers
determine epimorphisms up to a suitable equivalence
and prove the relevant kernel is characteristic.

So a complete classification of the epimorphisms, of an Artin-Tits group to its Coxeter group, is a first step in order to study the kernel of the standard epimorphism. Our contribution, with this classification, is start the extension of Artin's program to the affine type Artin-Tits groups.

Artin-Tits groups are a widely studied class of groups. We will study  a particular sub-family of this groups the $A(\widetilde{A}_{n})$ type Artin-Tits groups. In order to do so we start by some basic definitions in order to be able to explain our main goal.

Define a \textit{Coxeter matrix} of rank $n$ as a $n\times n$
symmetric matrix, $M=(m_{i,j})$, that verifies: $m_{i,i}=1$ for
all $i=1,\ldots,n,$ and $m_{i,j}\in \{2,3,\ldots\}\cup\{\infty\}$,
for all $i,j\in\{1,\ldots,n\}$, $i\neq j$. Let $M$ be a Coxeter
matrix. The \textit{Coxeter graph} associated to $M$ is the
labelled graph, $\Gamma$, defined as follows. The set of vertices
of $\Gamma$ is $\{1,\ldots,n\}$. If $m_{i,j}=2$ then there is no
edge between $i$ and $j$, if $m_{i,j}=3,$ then there is a
non-labelled edge between $i$ and $j$, and, finally, if
$m_{i,j}>3$ or $m_{i,j}=\infty$, then there is an edge between $i$
and $j$ labelled by $m_{i,j}$.

Let $G$ be a group. For $a,b\in G$ and $n\in \mathbb{N}$ we define
the word
$$prod_n(a,b)\left\{
\begin{array}{ll}
(ab)^{\frac{n}{2}} &\text{ if }n\equiv0(\text{mod }2) \\
(ab)^{\frac{n-1}{2}}a &\text{ if } n\equiv1(\text{mod }2)
\end{array}
\right.$$ Define the \textit{Coxeter group} $W$ associated to the
Coxeter graph  $\Gamma$ as the group presented by
$$W=W(\Gamma)=\left\langle
\begin{array}{cc}
s _{1},s_{2},\ldots ,s_{n} & \left|
\begin{array}{l}
s_i^2=1,\quad i\in \{1,\ldots,n\} \\
prod_{m_{i,j}}(s_{i},s_{j})=prod_{m_{j,i}}(s_{j},s_{i}) \\
\quad \quad \quad \quad \quad \quad \quad i\neq j\text{ and
}m_{i,j}\neq\infty
\end{array}
\right.
\end{array}
\right\rangle,$$ where $M=(m_{i,j})$ is the Coxeter matrix of
$\Gamma$. Define the \textit{Artin-Tits group} associated to $\Gamma$
to be the group presented by
$$A=A(\Gamma)=\left\langle
\begin{array}{cc}
s _{1},s_{2},\ldots ,s_{n} & \left|
\begin{array}{l}
prod_{m_{i,j}}(s_{i},s_{j})=prod_{m_{j,i}}(s_{j},s_{i}) \\
\quad \quad \quad \quad \quad \quad \quad \quad i\neq j\text{ and
}m_{i,j}\neq\infty
\end{array}
\right.
\end{array}
\right\rangle.$$

If the group $W(\Gamma)$ is finite, then we say that $A(\Gamma)$
is an \textit{Artin-Tits group of spherical type}.

The Artin-Tits group of type of spherical type $A(A_{n-1})$ is defined by the Coxeter graph in figure \ref{Fig1}.
\begin{figure}
  \begin{center}
  \includegraphics{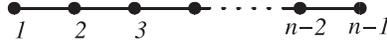}\\
  \end{center}
  \caption{Coxeter graph $A_{n-1}$}\label{Fig1}
\end{figure}These groups are also known as Artin braid groups and $W(A_{n-1})$ is isomorphic to the symmetric group $S_n$.
Let us recall the Artin presentation (see \cite{artin1}) of the braid group $A(A_{n-1})$, for $n>2$:

$$A(A_{n-1})=\left\langle
\begin{array}{cc}
\sigma _{1},\sigma _{2},\ldots ,\sigma _{n-1} & \left|
\begin{array}{ll}
\sigma _{i}\sigma _{j}=\sigma _{j}\sigma _{i} & (|i-j|\geq 2) \\
\sigma _{i}\sigma _{i+1}\sigma _{i}=\sigma _{i+1}\sigma _{i}\sigma
_{i+1} & (i=1,\ldots ,n-2)
\end{array}
\right.
\end{array}
\right\rangle. \text{ }
$$

Recall (see \cite{birman}) that the Garside element of $A(A_{n-1})$ is  $\Delta=\sigma_1\cdots\sigma_{n-1}\sigma_1\cdots\sigma_{n-2}\cdots\sigma_1\sigma_2\sigma_1.$ A positive braid is a braid which can be written  only with positive powers of the generators. A simple braid is a positive braid which divides $\Delta$.

The affine group $A(\widetilde{A}_{n-1})$, for $n>2$, is the defined by the Coxeter graph in figure \ref{Fig2}.

\begin{figure}[ht]
  \begin{center}
  \includegraphics{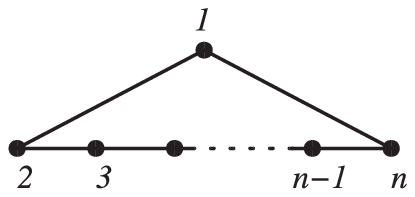}\\
  \end{center}
  \caption{Coxeter graph $\tilde{A}_{n-1}$}\label{Fig2}
\end{figure}

This is not a spherical type Artin-Tits group.
We can obtain a presentation of the affine Artin-Tits group
$A(\widetilde{A}_{n-1})$, we just add to the Artin presentation $A(A_{n-1})$
a generator $\sigma_n$ and the relations:

$$\sigma_n\sigma_1\sigma_n=\sigma_1\sigma_n\sigma_1$$
$$\sigma_n\sigma_{n-1}\sigma_n=\sigma_{n-1}\sigma_n\sigma_{n-1}$$
$$\sigma_n\sigma_i=\sigma_i\sigma_n\text{ for }i=2,\ldots,n-2.$$

The group $A(\widetilde{A}_{1})$ is defined by the Coxeter graph in figure \ref{Fig3}. This is a free group on two generators and  we will deal with it separately in section 9.

\begin{figure}[ht]
  \begin{center}
  \includegraphics{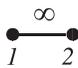}\\
  \end{center}
  \caption{Coxeter graph $\tilde{A}_{1}$}\label{Fig3}
\end{figure}

For a given Coxeter graph $\Gamma$, associated to a $n\times n$ Coxeter matrix,  there is an natural epimorphism $\mu:A(\Gamma)\rightarrow W(\Gamma)$ defined by $\mu(s_i)=s_i$ for $i=1,\ldots,n$.

\bigskip

\textbf{Main goal:} We will be interested in all the other epimorphisms in the case of the affine group of type $A(\widetilde{A}_{n-1})$. We will present a complete classification of these epimorphisms up to automorphisms of $W(\widetilde{A}_{n-1})$.

\bigskip

A full classification of the epimorphisms from the spherical type Artin-Tits groups into the corresponding Coxeter group can be found in \cite{artin}. This is the base point for several other results (see for example \cite{pariscohen}) concerning the classification of epimorphism of the Artin-Tits type groups to the Coxeter groups associated. The idea of this work is to split the Coxeter group, $W(\widetilde{A}_{n-1})$,  associated to $A(\widetilde{A}_{n-1})$. We know that $W(\widetilde{A}_{n-1})$ is isomorphic to $\mathbb{Z}^{n-1}\rtimes S_n$ (see \cite{bourbaki}) and we will project the image of an epimorphism to $S_n$ or $\mathbb{Z}^n$ in order to understand it. We will start by extending the result in \cite{artin} to the case of morphisms from the affine group $A(\widetilde{A}_{n-1})$ to $W(A_{n-1})\cong S_n$, $n>2$. In section 3, we start to reduce the number of candidates to be an epimorphism by using the relations in $A(\widetilde{A}_{n-1})$, $n>2$. In section 4, we will use another criterium on $\mathbb{Z}^n$, $n>2$ to  reduce even more our list. In section 5, we present a characterization of the kernel of a specific morphism. We proceed in section 6 by computing, up to some of the automorphisms of  $W(\widetilde{A}_{n-1})$, the images of the generators of $A(\widetilde{A}_{n-1})$, $n>2$. Using the  characterization obtained in section 5 and the results in section 6, we  write, in section 7, a complete list of epimorphisms from $A(\widetilde{A}_{n-1})$ to $W(\widetilde{A}_{n-1})$, $n>2$. We compute the classes of epimorphisms, up to automorphisms of $W(\widetilde{A}_{n-1})$, $n>2$, in section 8. We present the special case of $A(\widetilde{A}_{1})$ in section 9. In section 10 we present our main result,  a complete classification for the epimorphisms from $A(\widetilde{A}_{n-1})$
to $W(\widetilde{A}_{n-1})$, $n>1$.

\section{Epimorphisms from
$A(\widetilde{A}_{n-1})$ to de symmetric group $S_n$.}

We will present, in this section, the list of epimorphisms from
$A(\widetilde{A}_{n-1})$ to de symmetric group $S_n$, $n>2$. To do so we
start by recalling the result presented in \cite{artin} and after
it we will see as it proof can be adapted to obtain our list
epimorphisms from $A(\widetilde{A}_{n-1})$ to $S_n$.

\begin{theorem}\cite{artin} The possible representations of $A(A_{n-1})$
in the symetric group $S_n$ are:

\begin{enumerate}

\item $\sigma_i=(i,i+1)$.

\item $\sigma_1=(1,2)(3,4)(5,6)$, $D=(1,2,3)(4,5)$ for $n=6$.

\item $\sigma_1=(1,2,3,4)$, $D=(1,2)$  for $n=4$.

\item $\sigma_1=(1,3,2,4)$, $D=(1,2,3,4)$ for $n=4$.

\end{enumerate}

\end{theorem}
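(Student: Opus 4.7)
My plan is to analyze any homomorphism $\varphi\colon A(A_{n-1}) \to S_n$ through the images $t_i := \varphi(\sigma_i)$. The key initial observation is that the generators $\sigma_1,\ldots,\sigma_{n-1}$ are pairwise conjugate in $A(A_{n-1})$: from the braid relation one reads off $\sigma_{i+1} = w^{-1}\sigma_i w$ with $w = \sigma_i\sigma_{i+1}\sigma_i$. Hence all the $t_i$ lie in a single conjugacy class of $S_n$ and share a common cycle type $\tau$. The problem thus reduces to enumerating, up to simultaneous conjugation in $S_n$, all tuples $(t_1,\ldots,t_{n-1})$ of elements of cycle type $\tau$ that satisfy $t_it_j = t_jt_i$ for $|i-j|\ge 2$, the braid relation $t_it_{i+1}t_i = t_{i+1}t_it_{i+1}$, and together generate $S_n$.

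The central combinatorial step is to decide, for each candidate $\tau$, which pairs $(a,b)$ of elements of cycle type $\tau$ in $S_n$ satisfy $aba = bab$. For transpositions a direct computation shows this happens exactly when $a = b$ or $|\mathrm{supp}(a) \cap \mathrm{supp}(b)| = 1$, which leads directly to family (1) once one imposes the remaining commutations and surjectivity. For other cycle types one writes $a,b$ up to conjugation in terms of their support pattern, computes $aba\cdot(bab)^{-1}$, and determines when it is trivial. Combining this pairwise analysis with the long-range commutation relations and the requirement $\langle t_1,\ldots,t_{n-1}\rangle = S_n$ should severely restrict $\tau$, leaving only: transpositions in general, triple transpositions $\tau = (2,2,2)$ when $n=6$, and the two $4$-cycle configurations for $n=4$.

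The main obstacle is the exhaustive case analysis for cycle types with several parts or long cycles, where the braid relation interacts subtly with the support pattern and one must take care neither to miss nor to overcount $S_n$-conjugacy classes. I would first bound $|\mathrm{supp}(t_1)\cup\mathrm{supp}(t_2)|$ using the braid relation (forcing these supports to be tightly interlocked and hence controlled in size relative to $n$), then propagate through the chain $t_1,\ldots,t_{n-1}$ via the commutation relations, and finally use surjectivity to discard intransitive or otherwise small sub-images. The element $D$ appearing in cases (2)--(4) records a second piece of data beyond $\sigma_1$, effectively specifying $t_2$; together with $t_1$ it determines the whole tuple through the relations, and verifying that each listed pair $(\sigma_1, D)$ does extend consistently to an epimorphism is a concrete finite check.
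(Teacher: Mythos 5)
There is a genuine gap, and it sits exactly where you place your main load-bearing step. First, note that the paper does not prove this statement at all: it is quoted from Artin's paper \cite{artin}, and the paper only adapts Artin's preliminary lemmas later (for Theorem \ref{thoArtin}), so the benchmark is Artin's argument, which works with the element $D=\varphi(\sigma_1\cdots\sigma_{n-1})$ satisfying $D\,\varphi(\sigma_i)\,D^{-1}=\varphi(\sigma_{i+1})$ and the power identity that the paper recalls in the form $D_{ki}^{\,n-k+i+1}=(\sigma_k D_{ki})^{\,n-k+i}$. Your claim that one can ``first bound $|\mathrm{supp}(t_1)\cup\mathrm{supp}(t_2)|$ using the braid relation'' is false as stated: the braid relation $aba=bab$ alone puts no bound whatsoever on supports, since $a=b$ satisfies it for every permutation, and indeed even among the genuine exceptional solutions in the theorem (the $n=4$ cases with $4$-cycles) the pair $(t_1,t_2)$ has full support. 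So the pairwise analysis of braid pairs of a fixed cycle type $\tau$, which you present as the central combinatorial step, cannot by itself ``severely restrict $\tau$''; as $n$ grows the number of cycle types is unbounded and nothing in your plan makes the enumeration finite.

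What actually restricts the cycle type is the ingredient you relegate to a final ``propagation'' step: $t_1$ commutes with all of $t_3,\ldots,t_{n-1}$, which are $n-3$ conjugates of $t_1$ under powers of $D$, so $t_1$ lies in the centralizer of the subgroup they generate; combined with transitivity of the image, centralizers of such large subgroups are small, and this is what forces $t_1$ to be a transposition whenever $n>6$ (this is the content of Artin's Lemmas~3--5, the ones the paper tweaks with the bound $n>i\geq 3$ when passing to the affine case). The exceptional cases then require specific group-theoretic input, not pair-checking: the $n=6$ solution is the composition of the standard map with the outer automorphism of $S_6$, and the $n=4$ solutions exploit the normal Klein four subgroup of $S_4$; ruling out further exceptions uses the normal-subgroup structure of $S_n$ (only $1$, $A_n$, $S_n$ for $n\neq 4$). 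Your observation that the tuple is determined by $t_1$ together with $d=\varphi(D)$ is correct and is in fact the organizing principle of Artin's proof, but without the centralizer-plus-transitivity lemma your plan has no mechanism to terminate, so as written it does not constitute a proof.
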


We present now our result, which is a direct consequence of Artin's result:

\begin{theorem}\label{thoArtin}The possible representations of $A(\widetilde{A}_{n-1})$
in the symetric group $S_n$ are:

\begin{enumerate}

\item $\sigma_i=(i,i+1)$ and $\sigma_n=(1,n)$.

\item $\sigma_1=(1,2)(3,4)(5,6)$, $D=(1,2,3)(4,5)$ and
$\sigma_6=(1,5)(2,3)(4,6)$ for $n=6$.

\item $\sigma_1=(1,2,3,4)$, $D=(1,2)$ and $\sigma_4=(1,2,4,3)$ for $n=4$.

\item $\sigma_1=(1,2,3,4)$, $D=(1,2)$ and $\sigma_4=(1,3,4,2)$ for $n=4$.

\item $\sigma_i=(i,i+1)$ and $\sigma_4=\sigma_2$, for $n=4$.

\item $\sigma_1=(1,3,2,4)$, $D=(1,2,3,4)$ and $\sigma_4=(1,3,4,2)$ for $n=4$.

\item $\sigma_1=(1,3,2,4)$, $D=(1,2,3,4)$ and $\sigma_4=(1,2,4,3)$ for $n=4$.

\end{enumerate}

\end{theorem}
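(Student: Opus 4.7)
The approach is to reduce to Artin's classification (Theorem~2.1) by restriction. Given an epimorphism $\varphi:A(\widetilde{A}_{n-1})\to S_n$, consider its restriction to the subgroup of $A(\widetilde{A}_{n-1})$ generated by $\sigma_1,\ldots,\sigma_{n-1}$, which is a quotient of $A(A_{n-1})$. The key preliminary observation is that in every valid case the restriction is still surjective onto $S_n$: since $\varphi(\sigma_n)$ is conjugate in $\operatorname{Im}\varphi=S_n$ to $\varphi(\sigma_1)$ via the braid relation, one checks by inspection that in each of the seven listed cases the images of $\sigma_1,\ldots,\sigma_{n-1}$ already generate $S_n$, so the restriction must appear in Theorem~2.1. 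The problem then reduces to: for each of the four cases of Theorem~2.1, determine all $\tau=\varphi(\sigma_n)\in S_n$ satisfying the three affine relations
\begin{enumerate}
\item $\tau$ commutes with $\varphi(\sigma_i)$ for $2\le i\le n-2$;
\item $\tau\varphi(\sigma_1)\tau=\varphi(\sigma_1)\tau\varphi(\sigma_1)$;
\item $\tau\varphi(\sigma_{n-1})\tau=\varphi(\sigma_{n-1})\tau\varphi(\sigma_{n-1})$.
\end{enumerate}

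Then I would carry out the case analysis. For Artin's standard case $\sigma_i=(i,i+1)$ with $n\ge 5$, the centralizer of $\{(2,3),\ldots,(n-2,n-1)\}$ (which generates the full symmetric group on $\{2,\ldots,n-1\}$) is only $\{e,(1,n)\}$, and only $\tau=(1,n)$ passes the braid relation test, yielding item~1. For $n=4$ the commutation constraint is weaker (only involving $\sigma_2=(2,3)$), giving the four-element centralizer $\{e,(2,3),(1,4),(1,4)(2,3)\}$; a direct check singles out $\tau=(1,4)$ (item~1) and $\tau=(2,3)=\sigma_2$ (item~5), while the other two candidates fail the braid relation with $\sigma_1$. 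For each of Artin's non-standard cases~2, 3, 4, first compute the images $\varphi(\sigma_i)$ of all the $\sigma_i$ from the given data $\bigl(\varphi(\sigma_1),\varphi(D)\bigr)$ via the conjugation rule imposed by the role of $D$, then compute the centralizer of $\{\varphi(\sigma_2),\ldots,\varphi(\sigma_{n-2})\}$ and test the braid relations. Artin's case~2 ($n=6$) yields exactly one valid $\sigma_6$, giving item~2, while Artin's cases~3 and 4 (both $n=4$) each yield two valid choices of $\sigma_4$, giving items~3--4 and items~6--7 respectively.

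The main technical obstacle is the $n=4$ analysis: because the commutation constraint involves only a single element, the centralizer is much larger than in the generic case and the spurious candidates must be excluded by explicit multiplication of permutations in $S_4$. A secondary source of care is the $n=6$ case, where one must first reconstruct the tuple $\bigl(\varphi(\sigma_2),\ldots,\varphi(\sigma_5)\bigr)$ from the given pair $(\sigma_1,D)$ and then intersect the centralizers of three fixed-point-free involutions. In all cases, once a $\tau$ survives the centralizer and braid-relation tests, surjectivity of the extension is automatic since the restriction already surjects onto $S_n$.
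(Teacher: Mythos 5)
Your restrict-and-extend strategy is genuinely different from the paper's proof, which never reduces to the finite-type subgroup: the paper re-runs Artin's argument directly on the affine presentation, modifying his preliminary lemmas (e.g.\ adding to Lemma 1 the case where $\sigma_1$ commutes with $\sigma_n$, shifting indices in Lemma 3) and replacing Artin's conjugating element $D$ by the elements $D_{ki}=\sigma_k\cdots\sigma_n\sigma_1\cdots\sigma_{i-1}$. The trouble is that your version has a genuine gap at its first and most important step: the claim that the restriction of an arbitrary epimorphism $\varphi$ to $\langle\sigma_1,\ldots,\sigma_{n-1}\rangle$ is onto $S_n$. You justify this by ``inspection of the seven listed cases,'' which is circular, since the seven cases are exactly what is to be proved. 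Artin's Theorem (your Theorem~2.1) classifies only \emph{surjective} maps $A(A_{n-1})\to S_n$, so an epimorphism whose restriction is not surjective escapes your case analysis entirely, and no amount of care in the extension step can recover it.

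Worse, the claim is false as stated. Take $n=4$ and set $\varphi(\sigma_1)=\varphi(\sigma_3)=(1,2)$, $\varphi(\sigma_2)=(2,3)$, $\varphi(\sigma_4)=(1,4)$: all six defining relations of $A(\widetilde{A}_3)$ hold (transpositions sharing exactly one point satisfy the braid relation, $(2,3)$ and $(1,4)$ are disjoint, and $\sigma_1,\sigma_3$ have equal, hence commuting, images), the image is all of $S_4$, yet the restriction generates only the copy of $S_3$ fixing the letter $4$. This $\varphi$ agrees with item~5 of the theorem only after precomposing with the rotation $\sigma_i\mapsto\sigma_{i+1}$ of $A(\widetilde{A}_3)$ and conjugating by $(1,4,3,2)$; so the list can survive, but only if you first prove an a priori lemma that every epimorphism is equivalent, up to graph automorphisms of the source and conjugation in the target, to one with surjective restriction. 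That lemma is where the real content lies, and it is delicate: the usual proof that equal or commuting adjacent images propagate to all generators (Artin's Lemma 1) conjugates by $D=\sigma_1\cdots\sigma_{n-1}$, which carries $\sigma_i$ to $\sigma_{i+1}$ inside $B_n$, whereas in the affine group the corresponding rotation is an outer automorphism, so the propagation fails verbatim --- for $n=3$, the assignment $\sigma_1,\sigma_2\mapsto(1,2)$, $\sigma_3\mapsto(2,3)$ is even a surjection onto $S_3$ with two adjacent generators having equal images. Supplying affine substitutes for these lemmas is precisely what the paper's proof does and what your proposal omits. Your downstream computations (the centralizer $\{e,(1,n)\}$ for $n\geq5$, the two surviving candidates $(1,4)$ and $(2,3)$ at $n=4$, one extension of Artin's case 2 and two extensions each of his cases 3 and 4) are correct as far as they go, but they classify only the extensions of surjective restrictions, not the epimorphisms.
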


\begin{proof}

The result follows from Artin's proof by slightly changing the preliminary lemmas. This is just done by assuming the existence of an extra generator and the relations containing it.

As in Artin's paper we define for $k>i$:

$$D_{ki}=\sigma_k \cdots \sigma_n\sigma_1\cdots\sigma_{i-1}.$$

and we also have as in (6):

$$D^{n-k+i+1}_{ki}=(\sigma_kD_{ki})^{n-k+i}.$$

Regarding lemma 1 from \cite{artin}, we must add or if $\sigma_1$ is commutative with $\sigma_n$. The proof remains unchanged.

Lemma 2, 4 and 6 will be use has they are, without changes.

As to Lemma 3 we must add the case $k>i$ and then we must replace the integers
$t$ by $n-t$.  The proof is obtained by rewriting, the original proof,  with a shifting of the indices.

The conclusion of Lemma 5 is the same and in the proof we just have to
put an upper limit on $i\geq 3$ and write $n>i\geq 3$.

Now the proof of the theorem uses the adaptation of Artin's lemmas in the same way. We just have to search, among all possible permutations, for the  images of the $n-$th generator for the $n=4,6$ special cases.

\end{proof}

\section{Reducing the possible epimorphisms}

As we said in the introduction, the Coxeter group $W(\widetilde{A}_{n-1})$ is isomorphic to
$\mathbb{Z}^{n-1} \rtimes S_n$. For better computations we use the
following semidirect product $\mathbb{Z}^n \rtimes S_n$, where
$S_n $ acts on $\mathbb{Z}^n$ by permuting its coordinates but we
will impose that the sum of all coordinates is 0. Typically an
element in $W(\widetilde{A}_{n-1})$ is represented by
$[x_1,\ldots,x_n]s$, where $x_i\in \mathbb{Z}$,$\sum x_i=0$ and
$s\in S_n$. If $s=id$ then we will  write just $[x_1,\ldots,x_n]$. 

We will assume that:

$$\xi(\sigma_i)=[u_{i1},\ldots,u_{in}]\xi_l(\sigma_i),$$

where $\xi_l$ corresponds to the epimorphism in the $l^{th}$ case
in theorem \ref{thoArtin}.

\subsection{Case $l=1$}\label{sec31}

\begin{proposition}\label{prop31}

Let $\sigma_i$ be the generators of $A(\widetilde{A}_{n-1})$. We
define $\xi_{(x_1,\ldots,x_n,y)}$ from $A(\widetilde{A}_{n-1})$ to
$W(\widetilde{A}_{n-1})$ as:
$$\xi_{(x_1,\ldots,x_n,y)}(\sigma_i)=[\underbrace{y,\ldots,y}_{i-1},-(n-2)y-x_i,x_i,y,\ldots,y](i,i+1), n>i\geq 1$$
$$\xi_{(x_1,\ldots,x_n,y)}(\sigma_n)=[x_n,y,\ldots,y,-(n-2)y-x_n](1,n).$$

In this section we will denote $\xi_{(x_1,\ldots,x_n,y)}$ simply by $\xi$.
For all $x_i\in \mathbb{Z}$, $\xi(\sigma_i)$ verify the braid
relations.

\end{proposition}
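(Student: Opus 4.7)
The plan is to verify each defining relation of $A(\widetilde{A}_{n-1})$ directly in $\mathbb{Z}^{n}\rtimes S_n$, exploiting the fact that multiplication there is given by $[u]\,s \cdot [v]\,t = [u + s\!\cdot\! v]\,(st)$, where the permutation acts by permuting coordinates. Since the $S_n$-part of $\xi(\sigma_i)$ is precisely $\xi_1(\sigma_i)$, Theorem~\ref{thoArtin}(1) already guarantees that the permutation components of both sides of every braid/commutation relation agree. Hence the only task is to check the equality of the vector components in $\mathbb{Z}^n$.

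I would organise the verification according to the five families of relations in the presentation of $A(\widetilde{A}_{n-1})$: (a) the commutations $\sigma_i\sigma_j=\sigma_j\sigma_i$ with $2\le|i-j|\le n-2$ and $i,j<n$; (b) the commutations $\sigma_n\sigma_i=\sigma_i\sigma_n$ with $2\le i\le n-2$; (c) the classical braid relations $\sigma_i\sigma_{i+1}\sigma_i=\sigma_{i+1}\sigma_i\sigma_{i+1}$ for $1\le i\le n-2$; (d) the affine braid relation $\sigma_n\sigma_1\sigma_n=\sigma_1\sigma_n\sigma_1$; and (e) the affine braid relation $\sigma_n\sigma_{n-1}\sigma_n=\sigma_{n-1}\sigma_n\sigma_{n-1}$. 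For the commutations in (a) and (b) the involved transpositions have disjoint supports, so each acts trivially on the ``distinguished'' positions of the other's vector and everything collapses to the observation that the remaining uniform $y$-entries commute additively; this case is essentially mechanical.

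For case (c) I would write $\xi(\sigma_i)=[v_i](i,i+1)$, with $v_i$ concentrated at positions $i,i+1$ (on a ``background'' of $y$'s), and compute
\[
v_i + (i,i+1)\!\cdot\! v_{i+1} + (i,i+1)(i+1,i+2)\!\cdot\! v_i
\]
and the analogous expression from the right-hand side. Because the only non-$y$ entries of $v_i$ sit at positions $i,i+1$, tracking them through the two transpositions affects just the three coordinates $i,i+1,i+2$ (and yields a uniform shift of $3y$ elsewhere); the equality then reduces to a few identities relating $x_i,x_{i+1}$ and $y$ that the prescribed formulas are designed to satisfy, as an explicit test case with small $i$ confirms.

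The main obstacle will be (d) and (e), the affine braid relations involving $(1,n)$. Here the ``wraparound'' transposition interferes with $(1,2)$ (respectively $(n-1,n)$) on overlapping support, so when computing both sides of $\sigma_n\sigma_1\sigma_n=\sigma_1\sigma_n\sigma_1$ I must carefully track how positions $1,2,n$ (resp.\ $1,n-1,n$) are permuted by the successive actions of $(1,n)$ and $(1,2)$, whose product is a $3$-cycle. I would set up a position-by-position table: verify that on positions $1,2,n$ (resp.\ $1,n-1,n$) both sides produce the same combination of $x_1,x_n,y$ (resp.\ $x_{n-1},x_n,y$), and that on the remaining $n-3$ positions both sides yield $3y$. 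The coefficient $-(n-2)y$ in the middle entries of the $v_i$'s is precisely what is needed for the zero-sum condition and for the cancellations in these two relations to go through, so the asymmetric-looking formula for $\xi(\sigma_n)$ is forced by the affine relations; once the bookkeeping is carried out, each relation reduces to a true identity in $\mathbb{Z}$, completing the proof.
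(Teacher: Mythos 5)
Your proposal is correct and takes essentially the same route as the paper: a direct verification of each defining relation inside $\mathbb{Z}^{n}\rtimes S_n$, with the commutation relations dispatched exactly as in the paper by observing that each transposition only moves coordinates of the other factor's vector that are equal to $y$, and the braid relations checked by explicit position-by-position computation. The only differences are organizational: you note explicitly that the $S_n$-components agree automatically because they coincide with case 1 of Theorem \ref{thoArtin}, and you spell out the affine relation $\sigma_n\sigma_{n-1}\sigma_n=\sigma_{n-1}\sigma_n\sigma_{n-1}$ as a separate case, which the paper's displayed computation (whose case 1 formula for $\xi(\sigma_{i+1})$ only makes sense for $i+1\le n-1$) leaves implicit; that case does check out, so your slightly more systematic enumeration is, if anything, a small gain in completeness.
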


\begin{proof}

We will start by verifying the type $3$ relations:

\begin{enumerate}

\item Case $(i,i+1)$ with $i+1\leq n$ and $i\geq 1$.

We have

$\xi(\sigma_i) \xi( \sigma_{i+1})
\xi(\sigma_i)=\\
=[y,\ldots,y,-(n-2)y-x_i,x_i,y,\ldots,y](i,i+1)
[y,\ldots,y,-(n-2)y-x_{i+1},x_{i+1},y,\ldots,y](i+1,i+2)
\xi(\sigma_i)=\\
=[2y,\ldots,2y,-(n-2)y-x_i+y,y-(n-2)y-x_{i+1}
,x_i+x_{i+1},2y,\ldots,2y](i,i+2,i+1)
[y,\ldots,y,-(n-2)y-x_i,x_i,y,\ldots,y](i,i+1)=\\
=[3y,\ldots,3y,-2(n-2)y-x_i-x_{i+1}+y,y-(n-2)y
,y+x_i+x_{i+1},3y,\ldots,3y](i,i+2).$

and

$\xi(\sigma_{i+1}) \xi( \sigma_i) \xi(\sigma_{i+1})=\\
=[y,\ldots,y,-(n-2)y-x_{i+1},x_{i+1},y,\ldots,y](i+1,i+2)
[y,\ldots,y,-(n-2)y-x_i,x_i,y,\ldots,y](i,i+1)
\xi(\sigma_{i+1})=\\
=[2y,\ldots,2y,-2(n-2)y-x_i-x_{i+1},y+x_i
,x_{i+1}+y,2y,\ldots,2y](i,i+1,i+2)
[y,\ldots,y,-(n-2)y-x_{i+1},x_{i+1},y,\ldots,y](i+1,i+2)=\\
=\xi(\sigma_i) \xi( \sigma_{i+1}) \xi(\sigma_i)$

\item Case $(1,n)$.

We have

$\xi(\sigma_{1}) \xi( \sigma_n) \xi(\sigma_{1})=[-(n-2)y-x_1,x_1,y,\ldots,y](1,2)[x_n,y,\ldots,y,-(n-2)y-x_n](1,n)\xi(\sigma_{1})=\\
=[-(n-2)y-x_1+y,x_1+x_n,2y,\ldots,2y,y-(n-2)y-x_n](1,n,2)[-(n-2)y-x_1,x_1,y,\ldots,y](1,2)=[-(n-2)y+y,x_1+x_n+y,3y,\ldots,3y,y-2(n-2)y-x_n-x_1](2,n)$

and

$\xi(\sigma_n) \xi( \sigma_{1}) \xi(\sigma_n)=[x_n,y,\ldots,y,-(n-2)y-x_n](1,n)[-(n-2)y-x_1,x_1,y,\ldots,y](1,2)\xi(\sigma_n)=\\=[x_n+y,y+x_1,2y,\ldots,2y,-2(n-2)y-x_1-x_n](1,2,n)[x_n,y,\ldots,y,-(n-2)y-x_n](1,n)=\xi(\sigma_{1}) \xi( \sigma_n) \xi(\sigma_{1})$

\end{enumerate}

Verify type $2$ relations is much easier because it suffices to
notice that: in each product of two generators the permutations
will only act on coordinates that are equal (to $y$).

\end{proof}

\begin{proposition}
It does not exist any other solution to the braid equations.
\end{proposition}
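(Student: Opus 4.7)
The plan is to start with a hypothetical solution $\xi$ having the permutation assignments of Case $l=1$ and whittle down the freedom in its vector components until only the $(n+1)$-parameter family of the previous proposition remains. Write $\xi(\sigma_i)=[u_{i,1},\ldots,u_{i,n}](i,i+1)$ for $i<n$ and $\xi(\sigma_n)=[u_{n,1},\ldots,u_{n,n}](1,n)$, each vector summing to zero. I assume $n\geq 4$ throughout; the case $n=3$, in which the affine Coxeter graph is a triangle and carries no commutation relations, would have to be handled separately by direct expansion of the three cubic relations.

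The first step exploits the commutations $\sigma_i\sigma_j=\sigma_j\sigma_i$ available whenever $i$ and $j$ are non-adjacent in the Coxeter graph of $\widetilde{A}_{n-1}$. Since the transpositions $\pi_i,\pi_j$ have disjoint supports, the permutation parts of both sides agree automatically, and the relation reduces to an equality of vectors in $\mathbb{Z}^n$. Comparing coordinate by coordinate at a position $k$ that lies in the support of exactly one of $\pi_i,\pi_j$ yields an identity of the form $u_{j,k}=u_{j,\pi_i(k)}$. Running over every pair of commuting generators, including the commutations $\sigma_n\sigma_j=\sigma_j\sigma_n$ for $2\leq j\leq n-2$, one concludes that for each $j$ the coordinates of $u_j$ outside $\mathrm{supp}(\pi_j)$ all share a common value, which I denote by $y_j$. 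At this stage each $u_j$ depends only on $y_j$ and on the two (or one, after sum-zero) on-support entries.

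The second step uses the cubic braid relations $\sigma_i\sigma_{i+1}\sigma_i=\sigma_{i+1}\sigma_i\sigma_{i+1}$ for $i=1,\ldots,n-2$ together with the two extra relations $\sigma_n\sigma_1\sigma_n=\sigma_1\sigma_n\sigma_1$ and $\sigma_n\sigma_{n-1}\sigma_n=\sigma_{n-1}\sigma_n\sigma_{n-1}$. Repeating the calculation carried out in the previous proposition, but now with independent $y_i$ and $y_{i+1}$ (and analogously $y_1,y_n$ or $y_{n-1},y_n$) in place of a single $y$, and then evaluating the resulting vector equation at a coordinate outside the support of the common permutation part of both sides, isolates a scalar identity in the $y$'s that reduces to $y_i=y_{i+1}$. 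Chaining these equalities along the connected Coxeter graph forces all $y_j$ to a single common value $y$. Setting $x_j:=u_{j,j+1}$ for $j<n$ and $x_n:=u_{n,1}$, the sum-zero constraint pins down the remaining on-support entries, and $\xi$ must agree with $\xi_{(x_1,\ldots,x_n,y)}$. The main obstacle is this second step: one must check that the cubic equations really force equality of the off-support constants rather than merely mixing them with the $x_j$'s, and the key move is to test at a coordinate far from the common permutation's support, so that only the $y$'s contribute and a clean identity $y_i=y_{i+1}$ drops out.
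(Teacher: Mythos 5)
Your proof is correct and takes essentially the same route as the paper: the commutation relations force all off-support coordinates of each $\xi(\sigma_j)$ to a common constant, the cubic relations evaluated at a coordinate fixed by both transpositions give the paper's equation $2u_{ik}+u_{(i+1)k}=u_{ik}+2u_{(i+1)k}$, hence a single $y$, and the sum-zero condition then fixes the remaining on-support entry. Your caveat about $n=3$ is well taken but applies equally to the paper's own argument (which also implicitly needs a coordinate $k\neq i,i+1,i+2$ and nontrivial commutations); for $n=3$ a direct expansion of the cubic relations together with $\sum_j u_{ij}=0$ still yields the off-support equality, so the conclusion persists there.
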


\begin{proof}

Suppose that we have another solution for the braid equations. Let
us show first that:
$$u_{ik}=u_{i(k+1)}\text{ for }k\neq i,i+1.$$

We just have to see that from the relation

$$\sigma_k\sigma_i=\sigma_i\sigma_k$$

we obtain, among others, the equation

$$u_{kk}+u_{ik}=u_{kk}+u_{i(k+1)}.$$

We can conclude that $u_{ik}=u_{ij}$ where $k,j\neq i,i+1$.

Now we must show that:

$$u_{ik}=u_{(i+1)j}\text{ for }k\neq i,i+1\text{ and }j\neq i+1,i+2.$$

For this we will use the type $3$ braid relation

$$\sigma_i\sigma_{i+1}\sigma_i=\sigma_{i+1}\sigma_i\sigma_{i+1}$$

from which we get the equation

$$2u_{ik}+u_{(i+1)k}=u_{ik}+2u_{(i+1)k}.$$

So the new solution can differ, from the previous ones, for
each generator $\sigma_i$ only in coordinates $i,i+1$ if $i<n$ or
$1,n$ if $i=n$. But we have only one coordinate to change because
the equations
$$\sum_j u_{ij}=0$$
fix the second one. We obtain exactly previous solutions.

\end{proof}

\subsection{Case $l=2$}

In this case we are in $A(\widetilde{A}_{5})$, and we obtain, from the braid equations, the
following solutions for $\xi$:

$\xi(\sigma_1)=[x_{1}-x_{2}+x_{5}-x_{4}, -x_{1}+x_{2}-x_{5}+x_{4},
-x_{1}+x_{3},x_{1}-x_{3}, -x_{2}, x_{2}](1, 2) (3, 4) (5, 6) \\
\xi(\sigma_2)=[x_{1}-x_{2}+x_{5}-x_{3}-x_{4}, -x_{5}-x_{1}+x_{2},
-x_{1}+x_{2}-x_{5}+x_{3}+x_{4}, -x_{5}, x_{5}+x_{1}-x_{2}, x_{5}](1, 3) (2, 5) (4, 6), \\
\xi(\sigma_3)=[-x_{2}+x_{5}-x_{4}, -x_{3}, x_{3},
x_{2}-x_{5}+x_{4}, -x_{2}, x_{2}](1, 4)
(2, 3) (5,6), \\
\xi(\sigma_4)=[x_{1}-x_{2}+x_{5}-x_{4}, -x_{1}+x_{2}-x_{5}+x_{4},
-x_{1}+x_{2}-x_{5}+x_{3}, -x_{5}, x_{1}-x_{2}+x_{5}-x_{3},
x_{5}](1, 2)(3, 5) (4, 6),
\\ \xi(\sigma_5)=[x_{1}-x_{2}+x_{5}-x_{3}-x_{4}, -x_{1}, -x_{1}+x_{2}-x_{5}+x_{3}+x_{4},
x_{1}, -x_{2}, x_{2}](1, 3) (2, 4) (5, 6), \\
\xi(\sigma_6)=[-x_{4}, -x_{3}, x_{3}, -x_{5}, x_{4}, x_{5}](1, 5)
(2, 3) (4, 6)$

with $x_i \in \mathbb{Z}.$

\subsection{Case $l=3$}

In this case we are in $A(\widetilde{A}_{3})$, and we obtain, from the braid equations, the
following solutions for $\xi$:

$\xi(\sigma_1)=[x_{3}, -x_{3}-x_{1}-x_{2}, x_{1}, x_{2}](1, 2, 3, 4), \\
\xi(\sigma_2)=[-x_{2}-x_{1}, -x_{3}, x_{1}, x_{2}+x_{3}](1, 3, 4, 2), \\
\xi(\sigma_3)=[x_{3}, -x_{3}-x_{1}-x_{2}, x_{1}, x_{2}](1, 2, 3, 4), \\
\xi(\sigma_4)=[x_{3}, -x_{2}-x_{3}, x_{2}+x_{1}, -x_{1}](1,
2,4,3)$

with $x_i \in \mathbb{Z}.$

\subsection{Case $l=4$}

In this case we are in $A(\widetilde{A}_{3})$,  and we obtain, from the braid equations, the
following solutions for $\xi$:

$\xi(\sigma_1)=[-x_{1}, -x_{2}-x_{3}, x_{2}, x_{1}+x_{3}](1, 2, 3, 4), \\
\xi(\sigma_2)=[-x_{3}-x_{1}-x_{2}, x_{1}, x_{2}, x_{3}](1, 3, 4, 2), \\
\xi(\sigma_3)=[-x_{1}, -x_{2}-x_{3}, x_{2}, x_{1}+x_{3}](1, 2, 3, 4), \\
\xi(\sigma_4)=[-x_{3}-x_{1}-x_{2},x_{1}, x_{2}, x_{3}](1, 3, 4,
2)$

with $x_i \in \mathbb{Z}.$

\subsection{Case $l=5$}

In this case we are in $A(\widetilde{A}_{3})$,  and we obtain, from the braid equations, the
following solutions for $\xi$:

$\xi(\sigma_1)=[x_{1}, -x_{1}-2x_{3}, x_{3}, x_{3}](1,2), \\
\xi(\sigma_2)=[x_{3}, x_{4},-2x_{3}-x_{4}, x_{3}](2,3), \\
\xi(\sigma_3)=[x_{3}, x_{3}, x_{2},-2x_{3}-x_{2}](3,4), \\
\xi(\sigma_4)=[x_{3}, x_{4},-2x_{3}-x_{4}, x_{3}](2,3)$

with $x_i \in \mathbb{Z}.$

\subsection{Case $l=6$}

In this case we are in $A(\widetilde{A}_{3})$, and we obtain, from the braid equations, the
following solutions for $\xi$:

$\xi(\sigma_1)=[x_{2}, x_{3}, x_{1}, -x_{2}-x_{3}-x_{1}](1, 3, 2, 4), \\
\xi(\sigma_2)=[x_{2}, -x_{2}-x_{1},x_{1}+x_{3}, -x_{3}](1, 3, 4, 2), \\
\xi(\sigma_3)=[x_{2}+x_{3}+x_{1},-x_{1}, -x_{2}, -x_{3}](1, 4, 2, 3), \\
\xi(\sigma_4)=[x_{2}, -x_{2}-x_{1},x_{1}+x_{3}, -x_{3}](1, 3,4,2)$

with $x_i \in \mathbb{Z}.$

\subsection{Case $l=7$}

In this case we are in $A(\widetilde{A}_{3})$,  and we obtain, from the braid equations, the
following solutions for $\xi$:

$\xi(\sigma_1)=[x_{2}+x_{3}+x_{1}, -x_{1}, -x_{3}, -x_{2}](1, 3, 2, 4), \\
\xi(\sigma_2)=[x_{2}+x_{3}+x_{1},-x_{1}-x_{2}, -x_{1}-x_{3}, x_{1}](1, 3, 4, 2), \\
\xi(\sigma_3)=[x_{2},x_{3}, -x_{2}-x_{3}-x_{1}, x_{1}](1, 4, 2, 3), \\
\xi(\sigma_4)=[x_{1}+x_{2}, -x_{1},-x_{2}-x_{3}-x_{1},
x_{1}+x_{3}](1, 2, 4, 3)$

with $x_i \in \mathbb{Z}.$

\section{One criterium to be an Epimorphism}

We will present sufficient condition for a morphism, from a group
$G$ to $\mathbb{Z}^n$, to be an epimorphism (for details see for \cite{ZeCarlos}).

Let us introduce first some notation: Let
$\zeta:G\rightarrow\mathbb{Z}^n$ be a morphism and
$g_1,\ldots,g_k$ a generating system of $G$. Denote by $M_{\zeta}$ the
$k\times n$ matrix formed by $\zeta(g_1),\ldots,\zeta(g_k)$, and, for $p\leq n$,
$$det_{i_1,\ldots,i_p}^{j_1,\ldots,j_p}=det \left (
\left [\begin{array}{ccc}
  \zeta(g_{i_1})_{j_1} & \cdots & \zeta(g_{i_1})_{j_p} \\
  \vdots &  & \vdots \\
  \zeta(g_{i_p})_{j_1} & \cdots & \zeta(g_{i_p})_{j_p}
\end{array} \right ] \right ).$$

We will denote by $det_{i_1,\ldots,i_p}=det_{i_1,\ldots,i_p}^{1,\ldots,p}$.

\begin{lemma}\label{lemmadet1}

In the previous conditions, let $r\leq n$. If $det_{i_1,\ldots,i_n}=1$ then
$gcd(\left \{det_{q_1,\ldots,q_r}^{j_1,\ldots,j_r}, \{q_1,\ldots,q_r\}\subset \{i_1,\ldots,i_n\}\text{ and }\{j_1,\ldots,j_r\}\subset \{1,\ldots,n\}\right \})=1$, where $gcd$ stands for \emph{greatest common divisor}.

\end{lemma}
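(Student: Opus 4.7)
The plan is to derive the statement from the generalized Laplace expansion of the $n\times n$ determinant $det_{i_1,\ldots,i_n}$. The lemma is purely linear-algebraic: it concerns only the $n\times n$ submatrix $N$ whose rows are $\zeta(g_{i_1}),\ldots,\zeta(g_{i_n})$, and the group-theoretic setting plays no role beyond providing this matrix.

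First I would fix an arbitrary $r$-subset $\{q_1,\ldots,q_r\}\subset\{i_1,\ldots,i_n\}$ and let $\{q_{r+1},\ldots,q_n\}$ denote the complementary rows inside $\{i_1,\ldots,i_n\}$. Applying Laplace expansion to $N$ along the block of rows indexed by $q_1,\ldots,q_r$ yields
$$1 \;=\; det_{i_1,\ldots,i_n} \;=\; \sum_{J} \varepsilon_J \, det_{q_1,\ldots,q_r}^{\,J}\cdot det_{q_{r+1},\ldots,q_n}^{\,J^c},$$
where $J=\{j_1,\ldots,j_r\}$ ranges over all $r$-subsets of $\{1,\ldots,n\}$, $J^c=\{1,\ldots,n\}\setminus J$, and $\varepsilon_J\in\{\pm 1\}$ is the standard Laplace sign.

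Next I would let $d$ denote the gcd in the statement. Each minor $det_{q_1,\ldots,q_r}^{\,J}$ that appears in the expansion belongs to the set over which $d$ is computed (since $\{q_1,\ldots,q_r\}\subset\{i_1,\ldots,i_n\}$ and $J\subset\{1,\ldots,n\}$). Hence $d$ divides every term on the right-hand side, therefore divides the left-hand side, which equals $1$; so $d=1$. The degenerate case $r=n$ is immediate, since the only relevant minor is then $det_{i_1,\ldots,i_n}=1$ itself.

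The argument is essentially immediate once the correct identity is invoked, so I do not expect any real obstacle. The only care needed is to apply Laplace expansion to the $n\times n$ submatrix $N$ rather than to the full $k\times n$ matrix $M_\zeta$, and to note that the precise values of the signs $\varepsilon_J$ are irrelevant because only divisibility is used.
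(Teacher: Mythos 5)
Your proof is correct and takes essentially the same approach as the paper: the paper's one-line argument that $det_{i_1,\ldots,i_n}$ is a linear combination of the minors $det_{q_1,\ldots,q_r}^{j_1,\ldots,j_r}$ is exactly the generalized Laplace expansion you invoke, so you have simply made explicit (signs, fixed row block, the degenerate case $r=n$) what the paper leaves implicit.
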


\begin{proof}

It is sufficient to notice that, for each $r$,
$det_{i_1,\ldots,i_n}$ is a linear combination of all
 $det_{q_1,\ldots,q_r}^{j_1,\ldots,j_r}$ and if their $gcd$
$\neq 1$ then we have a contradiction.

\end{proof}

Remark that the previous lemma implies that the $gcd$ of all
elements in columns of the matrix are $1$.

\begin{proposition}\label{propcrit1}
Let
$\zeta:G\rightarrow\mathbb{Z}^n$ be a morphism,
$g_1,\ldots,g_k$ a generating system of $G$ and $r\leq n$. If there exists $\{q_1,\ldots,q_r\}\subset \{1,\ldots,k\}$ and $\{j_1,\ldots,j_r\}\subset \{1,\ldots,n\}$ such that $\#\{q_1,\ldots,q_r\}=r$ and $det_{q_1,\ldots,q_r}^{j_1,\ldots,j_r}=\pm 1$ then $rank(M_{\zeta})\geq r$.
\end{proposition}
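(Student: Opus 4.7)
The statement is essentially a standard fact from linear algebra: a matrix containing a nonvanishing $r \times r$ minor must have rank at least $r$. My plan is to prove exactly that, interpreting $M_\zeta$ as a matrix with integer entries and working over $\mathbb{Q}$ (or $\mathbb{Z}$, since rank is preserved).

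First, I would extract from $M_\zeta$ the $r \times r$ submatrix
\[
N = \bigl(\zeta(g_{q_a})_{j_b}\bigr)_{1 \le a, b \le r},
\]
whose determinant is $\det_{q_1,\ldots,q_r}^{j_1,\ldots,j_r} = \pm 1 \ne 0$. A nonvanishing determinant implies the rows of $N$ are $\mathbb{Q}$-linearly independent, so $\operatorname{rank}(N) = r$.

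Next, I would argue that linear independence of these row projections forces linear independence of the full rows $\zeta(g_{q_1}), \ldots, \zeta(g_{q_r})$ of $M_\zeta$. Indeed, if $\sum_a \lambda_a \zeta(g_{q_a}) = 0$ for some $\lambda_a \in \mathbb{Q}$, then restricting to coordinates $j_1, \ldots, j_r$ yields $\sum_a \lambda_a (N_{a,1}, \ldots, N_{a,r}) = 0$, whence all $\lambda_a = 0$ by the previous step. Thus $M_\zeta$ contains $r$ linearly independent rows, so $\operatorname{rank}(M_\zeta) \ge r$.

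There is really no obstacle here; the hypothesis $\pm 1$ is stronger than needed for this rank statement (any nonzero minor suffices), but the unit determinant hypothesis is presumably chosen to dovetail with Lemma \ref{lemmadet1} and with the eventual use of this criterion to detect surjectivity onto $\mathbb{Z}^n$ (where invertibility over $\mathbb{Z}$, not merely over $\mathbb{Q}$, matters). For the current proposition only the nonvanishing of the minor is used.
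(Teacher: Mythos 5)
Your proof is correct and complete: a square submatrix with determinant $\pm 1$ (indeed, any nonzero determinant) has $\mathbb{Q}$-linearly independent rows, and any linear relation among the corresponding full rows of $M_{\zeta}$ restricts, on the columns $j_1,\ldots,j_r$, to a relation among the rows of your submatrix $N$, forcing all coefficients to vanish; hence $\operatorname{rank}(M_{\zeta})\geq r$. There is, however, nothing in the paper to compare this against: the paper states Proposition \ref{propcrit1} with no proof at all, treating it as a known fact and referring the reader to \cite{ZeCarlos} for the details of this section, so your write-up simply supplies the standard argument the paper leaves implicit. Your closing remark is also accurate and worth keeping: the rank bound only needs the minor to be nonzero, while the unimodularity hypothesis $\pm 1$ is what matters over $\mathbb{Z}$, where it feeds into Lemma \ref{lemmadet1} and the surjectivity criterion of Proposition \ref{propcrit} --- surjectivity onto $\mathbb{Z}^n$ requires the rows to span the full lattice, not merely to have full rank over $\mathbb{Q}$.
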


A direct consequence of Proposition \ref{propcrit1} is:

\begin{proposition}\label{propcrit}
Let
$\zeta:G\rightarrow\mathbb{Z}^n$ be a morphism and
$g_1,\ldots,g_k$ a generating system of $G$. Then $det_{i_1,\ldots,i_n}=1$ for some $\{i_1,\ldots,i_n\}\subset \{1,\ldots,k\}$ with $\#\{i_1,\ldots,i_n\}=n$  if and
only if  $\zeta$ is an epimorphism.
\end{proposition}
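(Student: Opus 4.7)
My plan is to establish the two implications of the biconditional separately, leaning on Proposition \ref{propcrit1} in one direction and on the Cauchy--Binet formula in the other.

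For the ``if'' direction (some $n\times n$ subdeterminant equals $\pm 1$ implies $\zeta$ is surjective), I would first apply Proposition \ref{propcrit1} with $r=n$ to obtain $\operatorname{rank}(M_\zeta)\geq n$. The sharper observation, though, is that the $n\times n$ integer matrix whose rows are $\zeta(g_{i_1}),\ldots,\zeta(g_{i_n})$ has determinant $\pm 1$ and therefore belongs to $GL_n(\mathbb{Z})$; its rows form a $\mathbb{Z}$-basis of $\mathbb{Z}^n$. Since these rows lie in the image of $\zeta$, we get $\operatorname{im}(\zeta)=\mathbb{Z}^n$.

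For the converse, surjectivity of $\zeta$ yields, for each standard basis vector $e_j$, an expression $e_j=\sum_{i=1}^{k}a_{ij}\zeta(g_i)$ with integer coefficients. Collecting the $a_{ij}$ into an $n\times k$ integer matrix $A$ gives $A\,M_\zeta=I_n$, and applying Cauchy--Binet produces
$$1=\det(A\,M_\zeta)=\sum_{|S|=n}\det(A_{\bullet S})\,\det\bigl((M_\zeta)_{S\bullet}\bigr),$$
so the greatest common divisor of the $n\times n$ subdeterminants of $M_\zeta$ equals $1$. This is precisely the content of Lemma \ref{lemmadet1} in reverse, and combined with the ``if'' direction it closes the equivalence provided we are happy with ``gcd $=1$'' as the formulation.

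The main obstacle is that ``$\gcd=1$'' of the maximal minors does not by itself force any single one to equal $\pm 1$: the $3\times 2$ integer matrix with rows $(2,0),(0,3),(3,2)$ has maximal minors $6,4,-9$ of gcd $1$, yet none of them is unimodular. So the converse as literally stated is strictly stronger than what the Cauchy--Binet argument delivers. I would therefore either read the conclusion of the proposition as ``the gcd of the $n\times n$ subdeterminants equals $1$'', which is the standard integer-matrix criterion for surjectivity and suffices for everything the paper subsequently uses; or, if the unimodular form is genuinely required, argue case by case using the explicit parametric expressions for $\xi$ computed in Section 3 to exhibit a single $n\times n$ minor equal to $\pm 1$ whenever the morphism is onto.
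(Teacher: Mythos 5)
The paper offers no proof of this proposition at all: it is announced as ``a direct consequence of Proposition \ref{propcrit1}'', with the general theory delegated to the reference \cite{ZeCarlos}. Your forward implication is correct and in fact supplies exactly what that one-line justification omits: Proposition \ref{propcrit1} with $r=n$ only bounds the rank, and full rank over $\mathbb{Q}$ merely makes $\operatorname{im}(\zeta)$ a finite-index sublattice of $\mathbb{Z}^n$; it is your unimodularity observation --- an $n\times n$ submatrix of determinant $\pm 1$ lies in $GL_n(\mathbb{Z})$, so its rows form a $\mathbb{Z}$-basis contained in the image --- that actually yields surjectivity. (The sign is immaterial, since permuting $i_1,\ldots,i_n$ flips it.)

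Your analysis of the converse is also sound, and the defect you identify is in the paper's statement, not in your argument: your example with rows $(2,0),(0,3),(3,2)$ does generate $\mathbb{Z}^2$ (from $(3,2)-(2,0)=(1,2)$ and $(2,0)-2(1,2)=(0,-4)$ one gets $(0,1)$ and then $(1,0)$), yet its maximal minors $6,4,-9$ are none of them $\pm 1$; so for $k>n$ the ``only if'' direction as literally written is false, and Cauchy--Binet correctly delivers only $\gcd=1$, the converse of Lemma \ref{lemmadet1}. Two remarks. First, your claim that the gcd reformulation ``closes the equivalence'' still needs one ingredient you did not supply: $\gcd=1\Rightarrow$ surjective does not follow from your single-minor ``if'' direction (your own counterexample shows no unimodular minor need exist); it requires, e.g., Smith normal form, under which the gcd of the maximal minors equals the product of the invariant factors, i.e.\ the index of the image lattice in $\mathbb{Z}^n$. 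Second, the gcd version is precisely what the paper uses downstream: in Section 7 epimorphy for $n$ odd is deduced from the gcd of \emph{several} maximal determinants being $1$, not from a single unimodular minor, while the genuine ``if and only if'' claims in cases $l=3,4,6,7$ concern square situations with exactly as many effective generators as the rank of the target, where surjectivity and unimodularity do coincide. So your proposed repair --- read the conclusion as ``the gcd of the $n\times n$ subdeterminants equals $1$'' --- is the correct one and matches every subsequent application in the paper.
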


\section{Characterizing a kernel}

Let $n>2$. Consider de following sequence:

$$A(\widetilde{A}_{n-1})\stackrel{\xi}{\longrightarrow} W(\widetilde{A}_{n-1})\simeq \mathbb{Z}^{n-1}
\rtimes S_n \stackrel{p}{\longrightarrow} S_n$$

We will start this section by computing $\xi(ker(p \circ \xi))$.

Consider de Cayley graph $\mathcal{G}$ of $W(\widetilde{A}_{n-1})$
and its projection $p(\mathcal{G})$. Let us denote by
$Ch(p(\mathcal{G}))$ the set of paths in $p(\mathcal{G})$ with
origin in $id$. Let $\gamma\in Ch(p(\mathcal{G}))$ be a path
described by de sequence $(l_1,\ldots, l_k)$ the labels of the
edges in $\gamma$. Note that some of the $l_i$ can be negative
going a long an edge in the inverse sense. We define a morphism
$\phi$ from $Ch(p(\mathcal{G}))$ to $A(\widetilde{A}_{n-1})$ by:

$$\phi(\gamma)=\sigma^{sg(l_1)}_{|l_1|}\cdots\sigma^{sg(l_k)}_{|l_k|}.$$

\begin{proposition}

In the previous conditions:

$$\phi(\pi_1(p(\mathcal{G}),id))=ker(p \circ \xi).$$

\end{proposition}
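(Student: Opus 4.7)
The plan is to establish the two inclusions, after first verifying that $\phi$ gives a well-defined homomorphism out of $\pi_1(p(\mathcal{G}), id)$. Since $p(\mathcal{G})$ is a $1$-complex, its fundamental group at $id$ consists of edge loops modulo insertion/deletion of backtracking pairs; the formula for $\phi$ sends a backtracking pair $(\ldots, l, -l, \ldots)$ to a word containing $\sigma_{|l|}^{sg(l)}\sigma_{|l|}^{-sg(l)} = 1$, so $\phi$ factors through $\pi_1(p(\mathcal{G}), id)$ to give a well-defined map into $A(\widetilde{A}_{n-1})$.

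For the inclusion $\phi(\pi_1(p(\mathcal{G}),id)) \subseteq \ker(p \circ \xi)$, I would take a loop $\gamma = (l_1, \ldots, l_k)$ at $id$. The key point is that $\mathcal{G}$ is the Cayley graph of $W(\widetilde{A}_{n-1})$ with generators the images $\xi(\sigma_i)$ (which generate because $\xi$ is surjective), so that after projecting, the edge labeled $i$ leaving a vertex $\tau \in S_n$ in $p(\mathcal{G})$ arrives at $\tau\cdot p(\xi(\sigma_i))$. Concatenating all $k$ edges of $\gamma$ therefore reaches the vertex $\prod_j p(\xi(\sigma_{|l_j|}))^{sg(l_j)} = p(\xi(\phi(\gamma)))$. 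Because $\gamma$ is a loop at $id$, this product is $id$, and so $\phi(\gamma) \in \ker(p\circ \xi)$.

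For the reverse inclusion, I would take $w \in \ker(p \circ \xi)$ and pick any expression $w = \sigma_{i_1}^{\epsilon_1}\cdots \sigma_{i_k}^{\epsilon_k}$ with $\epsilon_j \in \{+1,-1\}$. The label sequence $\gamma = (\epsilon_1 i_1, \ldots, \epsilon_k i_k)$ is a path in $p(\mathcal{G})$ with origin $id$ whose endpoint is $\prod_j p(\xi(\sigma_{i_j}))^{\epsilon_j} = p(\xi(w)) = id$. Hence $\gamma$ is a loop based at $id$, representing an element of $\pi_1(p(\mathcal{G}), id)$, and by construction $\phi(\gamma) = w$.

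The only real conceptual point is to pin down the labelled graph structure on $p(\mathcal{G})$, i.e.\ the convention that identifies the label $i$ on an edge with multiplication by $p(\xi(\sigma_i))$; once this is fixed, both inclusions reduce to the tautology that a word $\sigma_{i_1}^{\epsilon_1}\cdots\sigma_{i_k}^{\epsilon_k}$ in $A(\widetilde{A}_{n-1})$ lies in $\ker(p\circ\xi)$ exactly when the corresponding edge sequence closes up into a loop at $id$ in $p(\mathcal{G})$. There is no algebraic obstacle beyond this bookkeeping; the content of the proposition is really the dictionary between closed paths and kernel elements, which is essentially built into the definitions of $\phi$ and of the projected Cayley graph.
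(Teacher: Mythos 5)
Your proof is correct. For comparison: the paper states this proposition with no proof at all --- it is treated as immediate from the definitions of $\phi$ and of the projected Cayley graph --- so your two-inclusion argument is precisely the routine verification the paper leaves implicit, not a divergent route. You also correctly isolate the one convention that genuinely needs fixing, namely that the edge labelled $i$ at a vertex $\tau$ of $p(\mathcal{G})$ ends at $\tau\, p(\xi(\sigma_i))$; this is indeed how the paper uses $p(\mathcal{G})$ in Section 7, where the permutation parts $p(\xi(\sigma_i))$ vary with the case $l$. Your well-definedness check (a backtracking pair maps to $\sigma_{|l|}^{sg(l)}\sigma_{|l|}^{-sg(l)}=1$, so $\phi$ descends to $\pi_1(p(\mathcal{G}),id)$) is a worthwhile detail the paper does not record.

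One small slip, harmless to the argument: your parenthetical claim that the $\xi(\sigma_i)$ generate $W(\widetilde{A}_{n-1})$ ``because $\xi$ is surjective'' assumes exactly what the surrounding sections are trying to decide --- at this point $\xi$ is only a candidate morphism, and Section 7 exists to determine for which parameters it is an epimorphism. Fortunately nothing in your proof actually uses surjectivity of $\xi$: the forward inclusion needs only that the endpoint of the path traced by $\gamma$ is $p(\xi(\phi(\gamma)))$, and the reverse inclusion needs only that every label sequence issuing from $id$ is a path, i.e.\ that every vertex of $p(\mathcal{G})$ carries an edge of each label; this holds because the vertex set of $p(\mathcal{G})$ is all of $S_n$ ($p$ being onto), while connectivity of $p(\mathcal{G})$ follows from surjectivity of $p\circ\xi=\xi_l$ onto $S_n$ (Theorem \ref{thoArtin}), not of $\xi$. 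With that parenthetical removed or corrected, the proof stands as written.
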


We will construct a particular generating set, which can be
divided in two parts. One appears naturally from the fact that
$A(A_{n-1})$ injects naturally in $A(\widetilde{A}_{n-1})$. These generators are
the generators of the pure braid group $P(A(A_{n-1}))$, the kernel of the standard epimorphism $\mu$. This also allow
us to build a maximal tree of $p(\mathcal{G})$ based on $id$ which
do not contain any edge labelled $n$ and no branches leaving $id$
with length greater than $\frac{n(n-1)}{2}$. The second one is
characterized by the following lemma:

\begin{lemma}\label{lemma2ndgenerators}

All generators, $g$,  of $\phi(\pi_1(p(\mathcal{G}),id))$ that
include in its writting the $n^{th}$ generator can be written as:

$$
g=s\sigma_nt^{-1}.
$$

For some simple elements $s,t\in A_{n-1}$.

\end{lemma}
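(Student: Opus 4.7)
The plan is to apply the standard correspondence between generators of the fundamental group of a graph and edges outside a chosen spanning tree. Fix the maximal tree $T$ of $p(\mathcal{G})$ described immediately before the lemma: it contains no edge labelled $n$ and every branch from $id$ has length at most $\frac{n(n-1)}{2}$. For each vertex $u$ of $p(\mathcal{G})$, let $\gamma_u$ denote the unique path in $T$ from $id$ to $u$. Then $\pi_1(p(\mathcal{G}),id)$ has a free basis consisting of the loops $\gamma_{u_1}\cdot e\cdot\gamma_{u_2}^{-1}$, one for each edge $e$ of $p(\mathcal{G})$ outside $T$ joining a pair of vertices $u_1,u_2$.

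Next I would observe that any such basis element $g$ whose image under $\phi$ involves the letter $\sigma_n$ must come from an edge $e$ labelled $n$, because $T$ contains no $n$-labelled edges and so $\sigma_n$ can only be contributed by the non-tree edge traversed in the loop. Setting $s:=\phi(\gamma_{u_1})$ and $t:=\phi(\gamma_{u_2})$, applying $\phi$ yields exactly
\[
g=s\,\sigma_n\,t^{-1}.
\]
Since $\gamma_{u_1}$ and $\gamma_{u_2}$ stay in $T$ and hence use only labels in $\{1,\ldots,n-1\}$, both $s$ and $t$ lie in the natural copy of $A(A_{n-1})$ inside $A(\widetilde{A}_{n-1})$, and since each edge of $T$ is traversed outward from $id$, both are positive braids.

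To conclude that $s$ and $t$ are simple, I would use that the bound $\frac{n(n-1)}{2}$ is exactly the Coxeter length of the longest element of $S_n$; the construction of $T$ from the pure braid group generators recalled just before the lemma forces $\gamma_u$ to be a geodesic in the Cayley graph of $S_n$ with respect to $\{s_1,\ldots,s_{n-1}\}$, i.e.\ a reduced expression for $u$. Reading such a reduced expression as a positive braid gives an element of $A(A_{n-1})$ whose length equals the Coxeter length of $u$, which is precisely the definition of a simple element (a positive divisor of $\Delta$). Applied to $\gamma_{u_1}$ and $\gamma_{u_2}$, this shows that $s$ and $t$ are simple.

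The main obstacle I anticipate is making the geodesic property of $T$ fully rigorous; if one is uneasy about inferring it from the tree-building recipe of the preceding paragraph, it suffices to replace $T$ by a breadth-first spanning tree of the subgraph of $p(\mathcal{G})$ obtained after deleting all $n$-labelled edges. Such a tree automatically assigns to each $u\in S_n$ a path of length equal to its Coxeter length, which is at most $\frac{n(n-1)}{2}$, and thereby delivers the reduced expressions required to invoke the simplicity criterion above.
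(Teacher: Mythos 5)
Your proof is correct and follows essentially the same route as the paper's: both use the standard spanning-tree basis of $\pi_1(p(\mathcal{G}),id)$ with respect to a maximal tree containing no $n$-labelled edge, so that the only basis loops involving $\sigma_n$ are of the form $s\sigma_n t^{-1}$ with $s,t$ the braids read off the two tree paths. Your geodesic/breadth-first refinement merely makes rigorous the point the paper handles by simply identifying the vertices of $p(\mathcal{G})$ with the simple elements of $A_{n-1}$, i.e.\ that tree paths spell reduced words and hence map under $\phi$ to simple braids.
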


\begin{proof}
It suffices to notice that all the vertices of $p(\mathcal{G})$
are the simple elements in $A_{n-1}$. We also have that a maximal
tree in the Cayley graph of $S_n$ is a maximal tree of
$p(\mathcal{G})$. So the only way to obtain a generator with the
$n^{th}$ generator is by going from the identity to some vertex
$s$ then along the edge labeled $n$ to the vertex $t$ and return
to the identity.

\end{proof}

\section{The behavior of $W(\widetilde{A}_{n-1})$ automorphisms}

In \cite{Franzsen} the author proves that the automorphisms of
$W(\widetilde{A}_{n-1})$, $n>1$, are all inner by graph. This means that all automorphisms are inner automorphisms (conjugations) or a Coxeter graph automorphism.  Using this result we will eliminate some of the parameters of the morphisms $\xi_{(x_1,\ldots,x_n,y)}$, again denoted simply by $\xi$ on this section, defined in section \ref{sec31}. We start by using the inner automorphisms.

\begin{proposition}

Let $\xi$ from $A(\widetilde{A}_{n-1})$ to $W(\widetilde{A}_{n-1})$ be the morphism defined in proposition \ref{prop31} and $n>2$.
Then, modulo inner automorphisms of $W(\widetilde{A}_{n-1})$, we assume
that $x_i$, for $i=1,\ldots,n-1$, can appear only in the expression defining $\xi(\sigma_n).$

\end{proposition}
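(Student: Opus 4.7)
My plan is to produce an inner automorphism of $W(\widetilde{A}_{n-1})$, concretely a conjugation by a translation $g=[a_1,\ldots,a_n]$ in the coroot--lattice subgroup (so $\sum_j a_j=0$), that transforms $\xi_{(x_1,\ldots,x_n,y)}$ into a morphism of the form $\xi_{(0,\ldots,0,x'_n,y)}$. The only ingredient needed is the semidirect--product multiplication in $\mathbb{Z}^n\rtimes S_n$, which yields, for any $[v]\tau\in W(\widetilde{A}_{n-1})$,
$$g\,[v]\tau\,g^{-1}\;=\;[\,a+v-\tau(a)\,]\,\tau.$$
For each generator the associated permutation is a transposition, either $\tau_i=(i,i+1)$ for $i<n$ or $\tau_n=(1,n)$, so the ``shift vector'' $a-\tau(a)$ is supported on only two coordinates.

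First I would expand $g\,\xi(\sigma_i)\,g^{-1}$ using this formula and verify that the result is again of the form given by Proposition~\ref{prop31}: the coordinates different from $i,i+1$ (respectively different from $1,n$) remain equal to $y$, while a direct computation shows that the parameter $x_i$ in $\xi(\sigma_i)$ is replaced by $x_i-(a_i-a_{i+1})$ for $i<n$, and that $x_n$ in $\xi(\sigma_n)$ is replaced by $x_n+(a_1-a_n)$. I would then solve the triangular system $a_i-a_{i+1}=x_i$ for $i=1,\ldots,n-1$, obtaining $a_j=a_1-(x_1+\cdots+x_{j-1})$; the scalar $a_1$ is pinned down by $\sum_j a_j=0$, forcing $a_1=\tfrac1n\sum_{i=1}^{n-1}(n-i)\,x_i$. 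With this choice the new parameters are $x'_i=0$ for $i<n$ and $x'_n=x_n+\sum_{i=1}^{n-1}x_i$, so $g\,\xi\,g^{-1}=\xi_{(0,\ldots,0,x'_n,y)}$, which is the normal form asserted.

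The delicate point, and what I expect to be the main obstacle, is the integrality of $a_1$: the expression $\tfrac1n\sum_{i=1}^{n-1}(n-i)\,x_i$ is only rational in general, so the constructed $a$ need not lie in the coroot lattice and the conjugation might fail to be by an honest element of $W(\widetilde{A}_{n-1})$. The resolution I would pursue is to exploit that the conjugation action of $[a]$ on the sum--zero subgroup is insensitive to the diagonal component of $a$ (the centre $\mathbb{Z}\mathbf{1}$ of the ambient $\mathbb{Z}^n\rtimes S_n$), so only the differences $a_i-a_{i+1}$ enter the induced automorphism; one can thus produce any integer $a$ with the required differences (e.g.\ take $a_1=0$ and $a_j=-(x_1+\cdots+x_{j-1})$) and, when the congruence $\sum(n-i)x_i\equiv 0\pmod n$ fails, absorb the residual mod--$n$ class separately into $x_n$ by a further conjugation in the coroot lattice. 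Once this bookkeeping is settled, the remainder of the argument is the routine semidirect--product calculation sketched above.
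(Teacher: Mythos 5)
Your computational core coincides with the paper's: the paper also conjugates only by translations, just recursively, using the zero--sum elements $w_{i,t}=[0,\ldots,0,t,0,\ldots,0,-t]$, and the product of all its conjugators is again a single zero--sum vector, so up to packaging your first two paragraphs reproduce the paper's mechanism (your parameter shifts $x_i\mapsto x_i-(a_i-a_{i+1})$, $x_n\mapsto x_n+(a_1-a_n)$ are correct, as is the forced value $a_1=\tfrac1n\sum_{i=1}^{n-1}(n-i)x_i$). The divergence is exactly at your ``delicate point'', and there the proposal has a genuine gap, in both halves of the proposed resolution. (a) An integral $a$ with the prescribed differences but $\sum_j a_j\neq 0$ is not an element of $W(\widetilde{A}_{n-1})$. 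Conjugation by $[a]$ inside $\mathbb{Z}^n\rtimes S_n$ does restrict to an automorphism of $W(\widetilde{A}_{n-1})$, and it does depend only on $a$ modulo $\mathbb{Z}\mathbf{1}$, but it is \emph{inner} if and only if $n\mid\sum_j a_j$: any inner automorphism fixing all the permutation parts $(i,i+1)$, $(1,n)$ must be conjugation by a pure translation $[b]$ with $\sum_j b_j=0$, and $\mathrm{conj}_{[a]}=\mathrm{conj}_{[b]}$ forces $a-b\in\mathbb{Z}\mathbf{1}$, i.e.\ $n\mid\sum_j a_j$. When the congruence fails, your $[a]$ lives in the extended affine Weyl group and induces a nontrivial diagram \emph{rotation} composed with an inner map --- not an inner automorphism, so it is not available under the proposition's ``modulo inner automorphisms''. (b) The residual class cannot be ``absorbed by a further conjugation in the coroot lattice'': for every zero--sum integral translation the induced shifts $\delta_i=-(a_i-a_{i+1})$ of $x_1,\ldots,x_{n-1}$ satisfy $\sum_{i=1}^{n-1}(n-i)\delta_i=na_1-\sum_j a_j\equiv 0\pmod n$, so $\sum_{i=1}^{n-1}(n-i)x_i \bmod n$ is an \emph{invariant} of all coroot--lattice conjugations; no subsequent conjugation can repair a failing congruence. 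A minimal instance is $n=3$, $(x_1,x_2)=(1,0)$: the system $a_1-a_2=1$, $a_2-a_3=0$, $a_1+a_2+a_3=0$ gives $3a_1=2$, and the obstruction persists under any further zero--sum conjugation.

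Two remarks to calibrate this. First, what your construction actually proves is normalization modulo inner--by--rotation automorphisms; since the paper ultimately classifies epimorphisms up to \emph{all} automorphisms of $W(\widetilde{A}_{n-1})$ (inner by graph), this weaker statement may be serviceable downstream, but it is not the proposition as stated. Second, the obstruction you uncovered is not an artifact of your packaging: the paper's own recursion only appears to avoid it because its bookkeeping asserts that $\psi_{w_{i+1,t}}$ fixes $\xi(\sigma_j)$ for all $j\neq i,i+1,n$, which fails for $j=n-1$ (the conjugator's last coordinate $-t$ meets the transposition $(n-1,n)$, shifting $x_{n-1}$ by $-t$; one checks this directly for $w_{2,-x_1}$ acting on $\xi(\sigma_{n-1})$ with the paper's own multiplication convention). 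So your instinct that integrality is the main obstacle was right; the gap is that neither of your two fixes, as described, stays within inner automorphisms, and the second is outright impossible because of the mod--$n$ invariant.
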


\begin{proof}

Consider de element
$w_{i,t}=[\underbrace{0,\cdots,0}_{i-1},t,0,\cdots,0,-t]\in
W(\widetilde{A}_{n-1})$. Let $\psi_{w_{i,t}}$ be the inner
automorphism of $W(\widetilde{A}_{n-1})$ associated to $w_{i,t}$.
So we have

$\psi_{w_{i+1,t}}(\xi(\sigma_i))=[\underbrace{y,\ldots,y}_{i-1},
-(n-2)y-x_i-t,x_i+t,y,\ldots,y](i,i+1);$

$\psi_{w_{i+1,t}}(\xi(\sigma_{i+1}))=[\underbrace{y,\ldots,y}_{i},
-(n-2)y-x_{i+1}+t,x_{i+1}-t,y,\ldots,y](i+1,i+2);$

$\psi_{w_{i+1,t}}(\xi(\sigma_{n}))=[x_n+t,y,\ldots,y,-(n-2)y-x_n-t](1,n);$

$\psi_{w_{i+1,t}}(\xi(\sigma_{j}))=\xi(\sigma_{j}) \text{ for } j\neq i,i+1,n.$

So we can put all $x_i$ only in the image of $\xi(\sigma_{n})$, with $i=1,\cdots,n-1$, conjugating
recursively by the elements

$$w_{2,-x_1},w_{3,-x_1-x_2},\cdots,w_{n-1,\sum_{i=1,\cdots,n-2}-x_i}\text{ and } w_{1,\sum_{i=1,\cdots,n-1} x_i}$$

So for the first conjugation:

$\psi_{w_{2,-x_1}}(\xi(\sigma_1))=[
-(n-2)y,0,y,\ldots,y](1,2);$

$\psi_{w_{2,-x_1}}(\xi(\sigma_{2}))=[y,
-(n-2)y-x_{2}-x_1,x_{2}+x_1,y,\ldots,y](2,3);$

$\psi_{w_{2,-x_1}}(\xi(\sigma_{n}))=[x_n-x_1,y,\ldots,y,-(n-2)y-x_n+x_1](1,n);$

$\psi_{w_{2,-x_1}}(\xi(\sigma_{j}))=\xi(\sigma_{j}) \text{ for } j\neq 1,2,n.$

The second conjugation acts as follows:

$\psi_{w_{3,-x_1-x_2}}\psi_{w_{2,-x_1}}(\xi(\sigma_1))=\psi_{w_{3,-x_1-x_2}}([
-(n-2)y,0,y,\ldots,y](1,2))=\psi_{w_{2,-x_1}}(\xi(\sigma_1));$

$\psi_{w_{3,-x_1-x_2}}\psi_{w_{2,-x_1}}(\xi(\sigma_{2}))=\psi_{w_{3,-x_1-x_2}}([y,
-(n-2)y-x_{2}-x_1,x_{2}+x_1,y,\ldots,y](2,3))=$

$=[y,-(n-2)y,0,y,\ldots,y](2,3);$

$\psi_{w_{3,-x_1-x_2}}\psi_{w_{2,-x_1}}(\xi(\sigma_{3}))=\psi_{w_{3,-x_1-x_2}}(\xi(\sigma_{3}))=$

$=\psi_{w_{3,-x_1-x_2}}([y,y,-(n-2)y-x_3,x_3,y,\ldots,y](3,4))=$

$=[y,y,-(n-2)y-x_3-x_2-x_1,x_3+x_2+x_1,y,\ldots,y](3,4);$

$\psi_{w_{3,-x_1-x_2}}\psi_{w_{2,-x_1}}(\xi(\sigma_{n}))=[x_n-2x_1-x_2,y,\ldots,y,-(n-2)y-x_n+2x_1+x_2](1,n);$

$\psi_{w_{3,-x_1-x_2}}\psi_{w_{2,-x_1}}(\xi(\sigma_{j}))=\xi(\sigma_{j}) \text{ for } j\neq 1,2,3,n.$

and so on.

The case of the last inner automorphism is slightly different:

$\psi_{w_{1,\sum_{i=1,\cdots,n-1}x_i}}\psi_{w_{n-1,\sum_{i=1,\cdots,n-2}-x_i}}
\cdots\psi_{w_{3,-x_1-x_2}}\psi_{w_{2,-x_1}}(\xi(\sigma_{n-1}))=$

$=\psi_{w_{1,\sum_{i=1,\cdots,n-1}x_i}}([y,
\ldots,y,-(n-2)y-x_{n-1}-x_{n-2}-\cdots-x_1,x_{n-1}+x_{n-2}+\cdots+x_1](n-1,n))=$

$=[x_{n-1}+x_{n-2}+\cdots+x_1,y,\ldots,y,-x_{n-1}-x_{n-2}-\cdots-x_1]$

$[y,\ldots,y,-(n-2)y-x_{n-1}-x_{n-2}-\cdots-x_1,x_{n-1}+x_{n-2}+\cdots+x_1](n-1,n)$

$[-x_{n-1}-x_{n-2}-\cdots-x_1,y,\ldots,y,x_{n-1}+x_{n-2}+\cdots+x_1]=$

$=[y,
\ldots,y,-(n-2)y,0](n-1,n).$

In the end of this process we have all the $x_i$ appear only in the expression defining $\xi(\sigma_n).$

\end{proof}

\begin{proposition}\label{propxiyp}

Let $\xi$ from $A(\widetilde{A}_{n-1})$ to $W(\widetilde{A}_{n-1})$ be the morphism defined in proposition \ref{prop31} and $n>2$.
Let $y,p\in\mathbb{Z}$, $\xi_{(y,p)}$ from $A(\widetilde{A}_{n-1})$ to $W(\widetilde{A}_{n-1})$ be the morphism defined by

$\xi_{(y,p)}(\sigma_i)=[\underbrace{y,\ldots,y}_{i-1},-(n-2)y,0,y,\ldots,y](i,i+1);$

$\xi_{(y,p)}(\sigma_n)=[p,y,\ldots,y,-(n-2)y-p](1,n);$

Then, for $p=x_n-(n-5)x_1-(n-6)x_2-\cdots-x_{n-4}+2x_{n-1}+x_{n-2}$, $\xi_{(y,p)}$ and $\xi$  are equal up to automorphisms of $W(\widetilde{A}_{n-1})$.

\end{proposition}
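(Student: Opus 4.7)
The plan is to pick up exactly where the previous proposition stopped. That proof showed that the composition
$$
\Psi := \psi_{w_{1,\,x_1+\cdots+x_{n-1}}} \circ \psi_{w_{n-1,\,-x_1-\cdots-x_{n-2}}} \circ \cdots \circ \psi_{w_{3,\,-x_1-x_2}} \circ \psi_{w_{2,\,-x_1}}
$$
sends every $\xi(\sigma_i)$ with $i<n$ to $[\underbrace{y,\dots,y}_{i-1},-(n-2)y,0,y,\dots,y](i,i+1)$, i.e.\ to $\xi_{(y,p)}(\sigma_i)$, independently of the $x_j$. So the only thing still to check is that $\Psi(\xi(\sigma_n))$ has the form $[p,y,\ldots,y,-(n-2)y-p](1,n)$ for the value of $p$ stated.

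To carry this out I would first record the conjugation rule in the semidirect product: for $w=[v_1,\ldots,v_n]$ and $g=[a_1,\ldots,a_n]\pi$,
$$
\psi_w(g)=\bigl[\,a+w-\pi(w)\,\bigr]\,\pi.
$$
Applied to $g=\xi(\sigma_n)$ with $\pi=(1,n)$, I would observe that $\pi$ fixes every intermediate position and swaps positions $1$ and $n$. Consequently, for $k\in\{2,\ldots,n-1\}$ the element $w_{k,t}-\pi(w_{k,t})$ equals $[t,0,\ldots,0,-t]$, so $\psi_{w_{k,t}}$ simply adds $t$ to the first coordinate and $-t$ to the last. For $k=1$, however, $w_{1,t}-\pi(w_{1,t})=[2t,0,\ldots,0,-2t]$, so the contribution is doubled.

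Third, I would accumulate the contributions to the first coordinate of $\xi(\sigma_n)$. The chain of middle conjugations contributes
$$
-\sum_{k=1}^{n-2}\sum_{j=1}^{k}x_j \;=\; -\sum_{j=1}^{n-2}(n-1-j)\,x_j,
$$
and the final conjugation by $w_{1,\,x_1+\cdots+x_{n-1}}$ contributes $2\sum_{j=1}^{n-1}x_j$. Hence the new first coordinate is
$$
p \;=\; x_n \;+\; \sum_{j=1}^{n-2}\bigl(2-(n-1-j)\bigr)x_j \;+\; 2x_{n-1},
$$
which, after reindexing and using $2-(n-1-j)=j-n+3$, matches the closed-form expression in the statement; the last coordinate is then forced to be $-(n-2)y-p$ by the constraint that all coordinates sum to zero.

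The main obstacle is purely bookkeeping: keeping track of the accumulated offsets on the first coordinate across $n-1$ successive conjugations, paying attention to the fact that the final conjugation doubles (because its support intersects both fixed points of $(1,n)$) while the earlier ones do not. Once the formula for the conjugation action is written down once and for all, the rest is linear algebra in $\mathbb{Z}$, and the identification with $\xi_{(y,p)}$ is immediate from the shape of $\Psi(\xi(\sigma_i))$ already established in the preceding proposition.
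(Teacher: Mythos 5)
Your proposal is correct and is essentially the paper's own proof: the paper applies exactly the same chain $\psi_{w_{1,\sum_{i\leq n-1}x_i}}\psi_{w_{n-1,-\sum_{i\leq n-2}x_i}}\cdots\psi_{w_{2,-x_1}}$ carried over from the preceding proposition and computes its effect on $\xi(\sigma_n)$; the only difference is that the paper grinds through the conjugations step by step, while you derive the increment rule once (conjugation by $w_{k,t}$ adds $t$ to the first coordinate of $\xi(\sigma_n)$ for $2\leq k\leq n-1$, and $2t$ for $k=1$, since $(1,n)$ moves both nonzero entries of $w_{1,t}$) and then sum, which is cleaner bookkeeping.

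One point you should have flagged rather than asserted away: your closed form $p=x_n+\sum_{j=1}^{n-2}(j+3-n)x_j+2x_{n-1}$, i.e.\ $p=x_n-(n-4)x_1-(n-5)x_2-\cdots-x_{n-4}+x_{n-2}+2x_{n-1}$, is correct (for $n=6$ it gives $x_6-2x_1-x_2+x_4+2x_5$, which a direct check of the five conjugations confirms), but it does \emph{not} literally match the expression printed in the proposition, whose leading coefficient is $-(n-5)$. The printed formula is internally inconsistent — coefficients decreasing by one from $-(n-5)$ at $x_1$ cannot reach $-1$ at $x_{n-4}$ — and the same off-by-one slip appears in the paper's intermediate displays (e.g.\ after the middle conjugations the first coordinate should read $x_n-(n-2)x_1-(n-3)x_2-\cdots-x_{n-2}$, not $x_n-(n-3)x_1-\cdots$). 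So the discrepancy is a typo in the paper, not an error in your computation; and since the proposition only needs $p$ to be \emph{some} fixed integer combination of the $x_i$ (the point being the reduction to the two parameters $(y,p)$), the substance of your argument, like the paper's, is unaffected.
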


\begin{proof}

After conjugating $\xi$ by $\psi=\psi_{w_{1,\sum_{i=1,\cdots,n-1} x_i}}\psi_{w_{n-1,\sum_{i=1,\cdots,n-2}-x_i}}\cdots\psi_{w_{3,-x_1-x_2}}\psi_{w_{2,-x_1}},$ we obtain:

$\psi(\xi(\sigma_i))=[\underbrace{y,\ldots,y}_{i-1},-(n-2)y,0,y,\ldots,y](i,i+1)=\xi_{(y,p)}(\sigma_i),\text{ for } 1 \leq i <n.$

To see what happens to  $\psi(\xi(\sigma_n))$ we will compute the last conjugation:

$\psi_{w_{1,\sum_{i=1,\cdots,n-1}x_i}}\psi_{w_{n-1,\sum_{i=1,\cdots,n-2}-x_i}}\cdots\psi_{w_{3,-x_1-x_2}}\psi_{w_{2,-x_1}}(\xi(\sigma_n))=$

$=\psi_{w_{1,\sum_{i=1,\cdots,n-1}x_i}}([x_n-(n-3)x_1-(n-4)x_2-\cdots-2x_{n-3}-x_{n-2},$

$,y,\ldots,y,-(n-2)y-x_n+(n-3)x_1+(n-4)x_2-\cdots+2x_{n-3}+x_{n-2}](1,n))=$

$=[x_{n-1}+x_{n-2}+\cdots+x_1,y,\ldots,y,-x_{n-1}-x_{n-2}-\cdots-x_1]$

$[x_n-(n-3)x_1-(n-4)x_2-\cdots-2x_{n-3}-x_{n-2},y,\ldots,y,-(n-2)y-x_n+(n-3)x_1+(n-4)x_2-\cdots+2x_{n-3}+x_{n-2}](1,n)$

$[-x_{n-1}-x_{n-2}-\cdots-x_1,y,\ldots,y,x_{n-1}+x_{n-2}+\cdots+x_1]=$

$=[x_n-(n-4)x_1-(n-5)x_2-\cdots-x_{n-3}+x_{n-1},y,
\ldots,y,-(n-2)y-x_n+(n-4)x_1+(n-5)x_2+\cdots+x_{n-3}-x_{n-1}](1,n)$

$[-x_{n-1}-x_{n-2}-\cdots-x_1,y,\ldots,y,x_{n-1}+x_{n-2}+\cdots+x_1]=$

$=[x_n-(n-5)x_1-(n-6)x_2-\cdots-x_{n-4}+2x_{n-1}+x_{n-2},$

$,y,\ldots,y,-(n-2)y-x_n+(n-5)x_1+(n-6)x_2+\cdots+x_{n-4}-2x_{n-1}-x_{n-2}](1,n).$

So $p=x_n-(n-5)x_1-(n-6)x_2-\cdots-x_{n-4}+2x_{n-1}+x_{n-2}.$

\end{proof}

\begin{remark}

Proposition \ref{propxiyp} states that instead of $n+1$ parameters in $\xi$, the $x_i$, for $i=1,\ldots,n$, and $y$, we just have to deal with
two parameters $y$ and $p$. It does not matter the values of each individual $x_i$, it just matters the value of $p$.
So from now on we will just deal with the morphisms $\xi_{(y,p)}$ with $y,p\in \mathbb{Z}.$

\end{remark}

The graph automorphisms form a dihedral group generated by a
rotation, $\rho$, of the graph by an angle of $\frac{2\pi}{n}$ and
a symmetry.

\medskip

\textbf{The rotation $\rho$}

\medskip

In order to understand the action of $\rho$ in $\mathbb{Z}^n$ we
will use as generating set, for $\mathbb{Z}^n$,
$\{w_i=w_{i,1}\}_{i=1,\ldots,n-1}.$ Now we fix

$$s_i=(i,i+1)\text{ for }i=1,\ldots,n-1$$
and
$$s_n=[1,0,\ldots,0,-1](1,n),$$

as generators of $W(\widetilde{A}_{n-1})$. The rotation $\rho$ (see Figure \ref{Fig4}), is
defined by:

$$\rho(s_i)=s_{i+1}\text{ for }i\neq n;$$

$$\rho(s_n)=s_1.$$

\begin{figure}[ht]
  \begin{center}
  \includegraphics{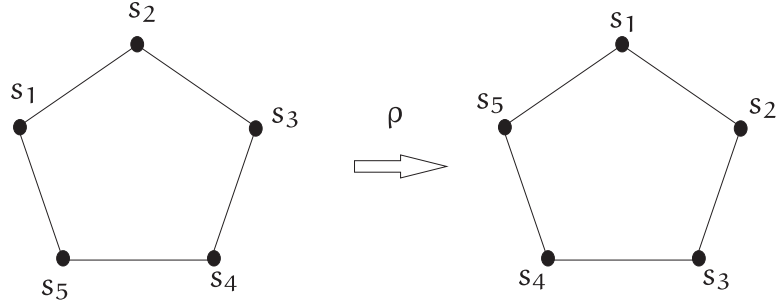}\\
  \end{center}
  \caption{The rotation $\rho$ acting on Coxeter graph $\tilde{A}_{4}$}\label{Fig4}
\end{figure}

\begin{lemma}\label{lemmaw}

Let $1 \leq k<n$. In the previous conditions we have

$$w_{1}=s_ns_1\cdots s_{n-2}s_{n-1}s_{n-2}\cdots s_1;$$

$$w_{k}=s_{k-1}\cdots s_1 w_{1} s_1\cdots s_{k-1}.$$

\end{lemma}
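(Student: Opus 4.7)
The plan is to verify the two identities by direct computation in the semidirect product $\mathbb{Z}^n \rtimes S_n$, using the standard rule $[a]\sigma \cdot [b]\tau = [a + \sigma(b)]\,\sigma\tau$, where $\sigma$ acts on $b$ by permuting coordinates.

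For the first identity, I would first observe that the product $s_1 s_2 \cdots s_{n-2} s_{n-1} s_{n-2} \cdots s_1$ involves only the generators $s_1,\ldots,s_{n-1}$, each of which carries the trivial $\mathbb{Z}^n$-part, so the product lies in $S_n \subset W(\widetilde{A}_{n-1})$. It is a classical identity in $S_n$ that this product equals the transposition $(1,n)$. Then I compute
\[
s_n \cdot \bigl(s_1\cdots s_{n-2} s_{n-1} s_{n-2}\cdots s_1\bigr) = [1,0,\ldots,0,-1](1,n)\cdot[0,\ldots,0](1,n),
\]
and since $(1,n)$ fixes the zero tuple, this equals $[1,0,\ldots,0,-1] = w_{1,1} = w_1$.

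For the second identity, I would prove by induction on $k \geq 1$ the equivalent recursion $w_k = s_{k-1} w_{k-1} s_{k-1}$ (with the $k=1$ case being vacuous). The inductive step is a short semidirect-product computation: since $s_{k-1}=(k-1,k)$ has trivial $\mathbb{Z}^n$-part and the tuple of $w_{k-1}$ has a $1$ in position $k-1$ and $-1$ in position $n$, one gets
\[
s_{k-1}\,w_{k-1} = [(k-1,k)\cdot(0,\ldots,1,0,\ldots,0,-1)]\,(k-1,k) = [0,\ldots,0,1,0,\ldots,-1]\,(k-1,k),
\]
where the $1$ now sits in position $k$; multiplying by $s_{k-1}$ on the right cancels the permutation factor and leaves exactly $w_k$. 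Iterating the recursion yields the stated formula $w_k = s_{k-1}\cdots s_1\,w_1\,s_1\cdots s_{k-1}$.

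There is no real obstacle here — both identities reduce to checking a semidirect-product multiplication and recognizing the classical expression for $(1,n)$ as a palindromic product of simple transpositions. The only point to be careful about is the convention for how $S_n$ acts on tuples, to make sure the sign of the transported coordinate comes out correctly when multiplying $s_n$ by the $S_n$-part in the first identity, and when swapping coordinates $k-1$ and $k$ in the inductive step.
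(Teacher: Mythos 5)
Your proposal is correct and follows essentially the same route as the paper's proof: the first identity via the classical fact that $s_1\cdots s_{n-2}s_{n-1}s_{n-2}\cdots s_1=(1,n)$ together with a direct semidirect-product multiplication, and the second by induction on the recursion $w_k=s_{k-1}w_{k-1}s_{k-1}$, where conjugation by $s_{k-1}=(k-1,k)$ simply shifts the $1$ in the tuple from position $k-1$ to position $k$. Your write-up is in fact slightly more explicit than the paper's (spelling out the multiplication rule and the vacuous base case), but there is no substantive difference.
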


\begin{proof}

The case $w_{1}$ is just a direct computation using the fact that
$s_1\cdots s_{n-2}s_{n-1}s_{n-2}\cdots s_1=(1,n).$ Suppose that

$w_{k-1}=s_{k-2}\cdots s_1 w_{1} s_1\cdots
s_{k-2}=[\underbrace{0,\ldots,0}_{k-2},1,0,\ldots,0,-1].$

Now we have that

$s_{k-1}w_{k-1}s_{k-1}=s_{k-1}s_{k-1}[\underbrace{0,\ldots,0}_{k-1},1,0,\ldots,0,-1]=w_{k}.$

\end{proof}

\begin{proposition}\label{proprho1}

Let $1 \leq k<n$. In the previous conditions, we have:

$$\rho(w_{i})=w_{i+1}w_{1}^{-1}\text{ for }i=1,\ldots,n-2;$$

and

$$\rho(w_{n-1})=w_{1}^{-1}.$$

\end{proposition}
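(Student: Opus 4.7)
The plan is to reduce everything to a direct computation of $\rho(w_1)$, then transport the result to $w_i$ via the conjugation formula $w_i = s_{i-1}\cdots s_1\,w_1\,s_1\cdots s_{i-1}$ from Lemma \ref{lemmaw}.

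First I would compute $\rho(w_1)$ directly. Applying $\rho$ term-by-term to $w_1 = s_n s_1 \cdots s_{n-2} s_{n-1} s_{n-2}\cdots s_1$ gives
\[
\rho(w_1) \;=\; s_1 s_2 \cdots s_{n-1} s_n s_{n-1} \cdots s_2 \;=\; s_1\,\bigl(\beta\, s_n\, \beta^{-1}\bigr),
\]
where $\beta = s_2\cdots s_{n-1}$ is a pure permutation in $S_n\subset W(\widetilde{A}_{n-1})$, realizing the cycle $(2,n,n-1,\ldots,3)$. The key step is the conjugation $\beta s_n \beta^{-1}$: since $\beta$ has trivial translation part, it acts on $s_n = [1,0,\ldots,0,-1](1,n)$ by permuting the coordinates of the translation part (fixing position $1$ and sending position $n$ to position $2$) and conjugating the transposition $(1,n)$ to $(1,2)$. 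This produces $\beta s_n \beta^{-1} = [1,-1,0,\ldots,0](1,2)$, and left-multiplication by $s_1 = (1,2)$ cancels the $(1,2)$ to yield $\rho(w_1) = [-1,1,0,\ldots,0]$. A direct addition confirms $w_2 w_1^{-1} = [0,1,0,\ldots,0,-1] + [-1,0,\ldots,0,1] = [-1,1,0,\ldots,0]$, proving the case $i=1$.

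For $i\ge 2$ I would apply $\rho$ to $w_i = \alpha_i\,w_1\,\alpha_i^{-1}$ with $\alpha_i = s_{i-1}\cdots s_1$, obtaining
\[
\rho(w_i) \;=\; \rho(\alpha_i)\,\rho(w_1)\,\rho(\alpha_i)^{-1}.
\]
Here $\rho(\alpha_i) = s_i s_{i-1}\cdots s_2$ is the cycle $(2,3,\ldots,i+1)$ in $S_n$, a pure permutation. Since $\rho(w_1)$ is a pure translation, the conjugation simply permutes its coordinates: the entry $-1$ at position~$1$ is fixed (position $1$ lies outside the cycle), while the entry $+1$ at position~$2$ is carried to position~$i+1$. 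Hence $\rho(w_i) = [-1,0,\ldots,0,1,0,\ldots,0]$ with $-1$ at position~$1$ and $+1$ at position~$i+1$. For $1\le i\le n-2$ this vector equals $w_{i+1}w_1^{-1}$ by direct addition; for $i=n-1$ the $+1$ lands at position~$n$, so $\rho(w_{n-1}) = [-1,0,\ldots,0,1] = w_1^{-1}$, giving the special case.

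The only real obstacle is the explicit bookkeeping for $\beta s_n \beta^{-1}$ in the first step, where one must track simultaneously the coordinate permutation acting on $[1,0,\ldots,0,-1]$ and the conjugation of the transposition $(1,n)$ inside the semidirect product $\mathbb{Z}^{n-1}\rtimes S_n$. Once $\rho(w_1)$ is pinned down explicitly, the rest reduces to the observation that conjugation of a pure translation by a pure permutation acts by permuting coordinates.
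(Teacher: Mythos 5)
Your proof is correct and takes essentially the same route as the paper: both arguments compute $\rho(w_1)=[-1,1,0,\ldots,0]=w_2w_1^{-1}$ directly from the expression in Lemma \ref{lemmaw} and then transport this to general $i$ via the conjugation formula $w_i=s_{i-1}\cdots s_1\,w_1\,s_1\cdots s_{i-1}$, using that conjugating a pure translation by a pure permutation just permutes coordinates. The only difference is packaging — the paper handles $i\geq 2$ by induction, conjugating by one $s_{k+1}$ at a time, whereas you conjugate $\rho(w_1)$ by the full cycle $\rho(s_{i-1}\cdots s_1)=(2,3,\ldots,i+1)$ in a single step — and your bookkeeping (the cycle structure of $s_2\cdots s_{n-1}$, the conjugate $(1,2)$ of $(1,n)$, and the landing position $i+1$ of the entry $+1$) all checks out under the paper's conventions.
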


\begin{proof}

We will start by computing $\rho(w_{i}).$ By lemma \ref{lemmaw} we
have

$$w_{1}=s_ns_1\cdots s_{n-2}s_{n-1}s_{n-2}\cdots s_1,$$

so

$\rho(w_{1})=s_1s_2\cdots s_{n-1}s_{n}s_{n-1}\cdots
s_2=(1,2)(2,3)\cdots(n-1,n)[1,0,\ldots,0,-1](1,n)\\(n-1,n)\cdots(2,3)=(n,\ldots,1)[1,0,\ldots,0,-1](1,n)(2,\ldots,n)=
(n,\ldots,1)[1,0,\ldots,0,-1](1,\ldots,n)=[-1,1,0,\ldots,0]=w_{2}w_{1}^{-1}.$

Suppose that $\rho(w_{k})=w_{k+1}w_{1}^{-1}\text{ for }1<k<n-1.$ By
lemma \ref{lemmaw} we have

$\rho(w_{k+1})=\rho(s_kw_{k}s_k)=s_{k+1}[-1,\underbrace{0,\ldots,0}_{k-1},1,0,\ldots,0]s_{k+1}=\\
=[-1,\underbrace{0,\ldots,0}_{k},1,0,\ldots,0]=w_{k+2}w_{1}^{-1}.$

Suppose that $k=n-1.$ By lemma \ref{lemmaw} we have

$\rho(w_{n-1})=\rho(s_{n-2}w_{n-2}s_{n-2})=s_{n-1}[-1,\underbrace{0,\ldots,0}_{n-3},1,0]s_{n-1}=\\
=[-1,0,\ldots,0,1]=w_{1}^{-1}.$

\end{proof}

\medskip

\textbf{The symmetry $\gamma$}

\medskip

Now we analyze the other generating automorphism, the symmetry $\gamma$ (see Figure \ref{Fig5}), of
$W(\widetilde{A}_{n-1})$. Let $s_i$, as before, denote the image
of $\sigma_i$ by the standard epimorphism. So

$$\gamma(s_1)=s_1;$$

and

$$\gamma(s_i)=s_{n-i+2}\text{ for }i\neq 1.$$

\begin{figure}[ht]
  \begin{center}
  \includegraphics{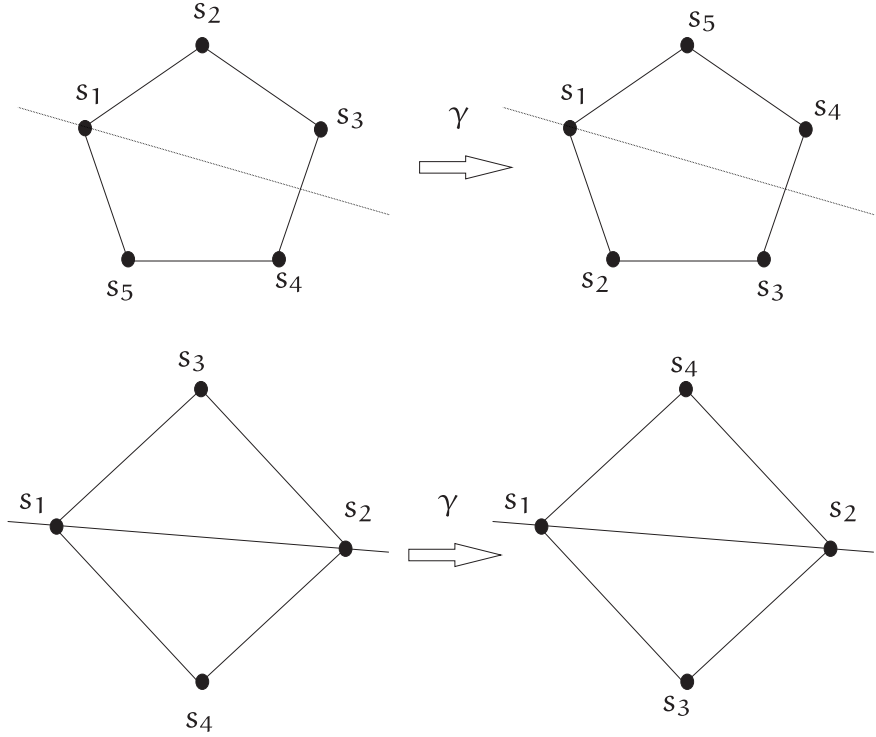}\\
  \end{center}
  \caption{The symmetry  $\gamma$ acting on Coxeter graphs $\tilde{A}_{4}$ and $\tilde{A}_{3}$}\label{Fig5}
\end{figure}

\begin{proposition}\label{propsym}

In these conditions we have:

$$\gamma(w_{1})=w_2^{-1}w_3;$$

$$\gamma(w_{2})=w_{3}w_{1}^{-1}$$

$$\gamma(w_{3})=w_{3};$$

$$\gamma(w_{i})=w_{3}w_{n+3-i}^{-1},\text{
for }i=4,\ldots,n-1;$$

\end{proposition}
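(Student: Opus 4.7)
My plan is to mirror the strategy used in the proof of Proposition \ref{proprho1}: compute $\gamma(w_1)$ directly, then obtain $\gamma(w_k)$ for $k \geq 2$ by induction using the recursive form $w_k = s_{k-1} w_{k-1} s_{k-1}$, which is an immediate consequence of Lemma \ref{lemmaw}. Starting from $w_1 = s_n s_1 s_2 \cdots s_{n-2} s_{n-1} s_{n-2} \cdots s_1$ and applying $\gamma$ letter-by-letter, using $\gamma(s_1)=s_1$, $\gamma(s_2)=s_n$ and $\gamma(s_j)=s_{n-j+2}$ for $j \geq 3$, the base case becomes the word
\[
\gamma(w_1) \;=\; s_2\, s_1\, (s_n s_{n-1} \cdots s_4 s_3 s_4 \cdots s_{n-1} s_n)\, s_1,
\]
which I would then multiply out step by step in $W(\widetilde{A}_{n-1}) \simeq \mathbb{Z}^{n-1} \rtimes S_n$. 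Since $w_1$ lies in the translation subgroup, the accumulated permutation must collapse to the identity; the translation content is supplied entirely by the two occurrences of $s_n = [1,0,\ldots,0,-1](1,n)$ (the only generator with non-trivial vector part) and shuffled by the transpositions sitting between and around them, and should collapse to $w_2^{-1} w_3$.

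Once $\gamma(w_1)$ is known, $\gamma(w_2)$ and $\gamma(w_3)$ reduce to conjugations of pure translations. Concretely $\gamma(w_2) = s_1 \gamma(w_1) s_1$ and $\gamma(w_3) = s_n \gamma(w_2) s_n$, and a short direct check shows that conjugation by $s_n$, despite its own non-trivial translation part, acts on a pure translation simply as the transposition $(1,n)$ on coordinates. Swapping coordinates $1$ and $2$ of $w_2^{-1}w_3$ yields $w_3 w_1^{-1}$, and then swapping coordinates $1$ and $n$ of $w_3 w_1^{-1}$ yields $w_3$. For $k \geq 4$ I would induct on $k$ using $\gamma(w_k) = s_{n-k+3}\, \gamma(w_{k-1})\, s_{n-k+3}$; assuming $\gamma(w_{k-1}) = w_3 w_{n+4-k}^{-1}$, whose only nonzero coordinates are $+1$ at position $3$ and $-1$ at position $n+4-k$, conjugation by the transposition $(n-k+3,\, n-k+4)$ moves the $-1$ entry from position $n+4-k$ to position $n+3-k$, producing $w_3 w_{n+3-k}^{-1}$; the base case $k = 4$ gives $s_{n-1} w_3 s_{n-1} = w_3 w_{n-1}^{-1}$ by the same coordinate-swap argument.

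The hard part will be the calculation of $\gamma(w_1)$ itself. The length $2n+1$ word displayed above has to be fully expanded in the semidirect product, and one must verify that after the cascade of transpositions the two translation contributions from the two copies of $s_n$ land in precisely the coordinates required to reconstruct $w_2^{-1}w_3$. Everything downstream is then essentially routine conjugation arithmetic on pure translation vectors.
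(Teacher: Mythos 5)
Your proposal is correct and follows essentially the same route as the paper's own proof: the paper likewise computes $\gamma(w_1)$ directly from the expression in Lemma \ref{lemmaw} (arriving at exactly your word $s_2s_1s_ns_{n-1}\cdots s_4s_3s_4\cdots s_ns_1$ and the value $w_2^{-1}w_3$), then obtains $\gamma(w_2)$, $\gamma(w_3)$ and the base case $\gamma(w_4)=w_3w_{n-1}^{-1}$ by conjugating with $\gamma(s_{k-1})$ via $w_k=s_{k-1}w_{k-1}s_{k-1}$, and finishes by the same induction with the same coordinate-swap bookkeeping, including your (correct) observation that conjugation by $s_n$ acts on pure translations simply as the transposition $(1,n)$. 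The only piece you defer, the semidirect-product expansion of $\gamma(w_1)$, is exactly the computation the paper carries out explicitly, and it works as you predict; the sole quibble is that this word has length $2n-2$, not $2n+1$.
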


\begin{proof}

We will start by computing $\gamma(w_{i}).$ By lemma \ref{lemmaw}
we have

$$w_{1}=s_ns_1\cdots s_{n-2}s_{n-1}s_{n-2}\cdots s_1,$$

so for $i=1$ we have

$\gamma(w_{1})=s_2s_1s_{n}s_{n-1}\cdots s_{4}s_{3}s_{4}\cdots
s_{n}s_1=$

$=s_2s_1[1,0,\ldots,0,-1](1,n)(3,n)[1,0,\ldots,0,-1](1,n)s_1=$

$=[0,0,1,0\ldots,0,-1]s_2s_1(1,n)(3,n)[1,0,\ldots,0,-1](1,n)s_1=$

$=[0,0,1,0\ldots,0,-1]s_2s_1(1,n)[1,0,-1,0,\ldots,0](3,n)(1,n)s_1=$

$=[0,-1,1,0\ldots,0]s_2s_1(1,n)(3,n)(1,n)s_1=w_2^{-1}w_3.$

Let $i=2.$

$$\gamma(w_{2})=\gamma(s_{1}w_{1}s_1)=\gamma(s_1)w_{2}^{-1}w_{3}\gamma(s_1)=w_{3}w_{1}^{-1}.$$

Let $i=3$ then

$\gamma(w_{3})=\gamma(s_{2}w_{2}s_2)=\gamma(s_2)w_{3}w_{1}^{-1}\gamma(s_2)=$

$=s_{n}w_{3}w_{1}^{-1}s_n=
w_1(1,n)w_{3}(1,n)=w_{3}(1,n)(1,n)=w_{3}.$

Let $i=4$ then

$\gamma(w_{4})=\gamma(s_{3}w_{3}s_3)=\gamma(s_3)w_{3}^{-1}\gamma(s_3)=$

$=s_{n-1}w_{3}^{-1}s_{n-1}=
w_{3}^{-1}w_{n-1}.$

Suppose that the formula is valid for $4<i<n.$ We have

$\gamma(w_{i+1})=\gamma(s_{i}w_{i}s_i)=\gamma(s_i)w_{3}w_{n-i+3}^{-1}\gamma(s_i)=$

$=s_{n-i+2}w_{3}
w_{n-i+3}^{-1}s_{n-i+2}= w_{3}w_{n-(i+1)+3}^{-1}.$

\end{proof}

\section{Conditions to be an Epimorphism}

In this section we will use the criterium of the previous section.
We will obtain for each case $l$ conditions on the parameters that
tell us when $\xi_l$ is an epimorphism. Let us fix $n>2$.

\subsection{Case l=1}

We are working in $A(\widetilde{A}_{n-1})$ and we have to compute
$\xi_{(y,p)}(\phi(\pi_1(p(\mathcal{G}),id))$. We will start by studding
this set.

\begin{lemma}\label{lemseq1}

Let $y,p\in\mathbb{Z}$, $\sigma_i \in A(\widetilde{A}_{n-1})$, $1\leq i \leq n-1$ we have:

$$\xi_{(y,p)}(\sigma_k\cdots\sigma_1)=[\underbrace{-(n-1-k)y,\ldots,-(n-1-k)y}_k,0,ky,\ldots,ky](1,\ldots,k+1)$$

and

$$\xi_{(y,p)}(\sigma_1\cdots\sigma_k)=[-(nk-2k)y,\underbrace{(k-1)y,\ldots,(k-1)y}_{k},ky,\ldots,ky](1,k+1,\ldots,2)$$

 for $k<n$.

\end{lemma}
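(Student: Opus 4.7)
The plan is to prove both identities by induction on $k$, relying on the multiplication rule in $\mathbb{Z}^n \rtimes S_n$ described in Section 3. The base case $k=1$ is immediate from Proposition \ref{propxiyp}, which gives $\xi_{(y,p)}(\sigma_1) = [-(n-2)y, 0, y, \ldots, y](1,2)$; this matches both displayed formulas once one observes that the cycle $(1,2)$ agrees with $(1,\ldots,k+1)$ and $(1,k+1,\ldots,2)$ when $k=1$, and that the scalar pattern $-(n-2)y, 0, y,\ldots,y$ coincides with the two patterns (note that $-(n-1-k)y = -(n-2)y$ and $-(nk-2k)y = -(n-2)y$ when $k=1$, while $(k-1)y = 0$).

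For the inductive step I treat the two claims in parallel. For the first formula, write
\[
\sigma_{k+1}\sigma_k\cdots\sigma_1 = \sigma_{k+1}\cdot(\sigma_k\cdots\sigma_1),
\]
apply the inductive hypothesis to the right-hand factor, and compute the product $\xi_{(y,p)}(\sigma_{k+1})\cdot\xi_{(y,p)}(\sigma_k\cdots\sigma_1)$. At the permutation level one checks directly that $(k+1,k+2)(1,\ldots,k+1) = (1,\ldots,k+2)$. At the vector level, the permutation $(1,\ldots,k+1)$ of the second factor acts on the vector of the first factor (whose only non-$y$ entries sit at positions $k+1$ and $k+2$); adding the result to the inductive-hypothesis vector yields the predicted values $-(n-1-(k+1))y$ at the first $k+1$ positions, $0$ at position $k+2$, and $(k+1)y$ at the remaining positions. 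The second formula is handled the same way using $\sigma_1\cdots\sigma_{k+1} = (\sigma_1\cdots\sigma_k)\cdot\sigma_{k+1}$, where the permutation updates as $(1,k+1,\ldots,2)(k+1,k+2) = (1,k+2,k+1,\ldots,2)$ and the leading coordinate $-(nk-2k)y$ updates to $-(n(k+1)-2(k+1))y$, the block of $(k-1)y$ entries extends by one position and is replaced by $ky$'s, and the trailing $ky$'s become $(k+1)y$'s.

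The main obstacle is purely bookkeeping: one must carefully track how the permutation carried by one factor acts on the vector of the other under the semidirect-product law, distinguishing the ``boundary'' positions (where the specific values $-(n-1-k)y$, $0$, $(k-1)y$, $ky$ of the inductive hypothesis appear) from the generic $y$ positions that are invariant under the small transpositions $(k+1,k+2)$. A convenient sanity check at each step is that the $n$ coordinates must sum to zero, as required for membership in $W(\widetilde{A}_{n-1}) \subset \mathbb{Z}^n \rtimes S_n$ under the embedding from Section 3.
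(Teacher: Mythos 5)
Your overall strategy is exactly the paper's: the paper proves this lemma by induction on $k$, with the base case $k=1$ immediate and the inductive step carried out by multiplying $\xi_{(y,p)}(\sigma_{k+1})$ against the inductive-hypothesis expression, using precisely your two decompositions $\sigma_{k+1}\cdot(\sigma_k\cdots\sigma_1)$ and $(\sigma_1\cdots\sigma_k)\cdot\sigma_{k+1}$ and the same permutation identities. So the skeleton of your argument matches the paper's proof step for step.

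There is, however, one concrete slip in the bookkeeping, and since this lemma \emph{is} bookkeeping it matters: you assert that ``the permutation $(1,\ldots,k+1)$ of the second factor acts on the vector of the first factor.'' Under the semidirect-product law consistent with your own permutation identity $(k+1,k+2)(1,\ldots,k+1)=(1,\ldots,k+2)$ (which forces left-to-right composition), it is the \emph{first} factor's permutation that twists the \emph{second} factor's vector, i.e.\ $[u]s\cdot[v]s'=[u+s(v)](ss')$. Your stated rule produces the wrong boundary entries: in $\xi_{(y,p)}(\sigma_{k+1})\,\xi_{(y,p)}(\sigma_k\cdots\sigma_1)$ the cycle $(1,\ldots,k+1)$ fixes position $k+2$, so under your rule coordinate $k+2$ of the product would be $0+ky=ky$ rather than the required $0$, and the entry $-(n-2)y$ of the first vector would be pushed into one of the first $k+1$ coordinates, destroying the constant block $-(n-1-(k+1))y$. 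With the correct rule the transposition $(k+1,k+2)$ swaps the entries $0$ and $ky$ of the inductive-hypothesis vector, and adding $[y,\ldots,y,-(n-2)y,0,y,\ldots,y]$ gives $y-(n-1-k)y=-(n-2-k)y$ in the first $k$ coordinates, $-(n-2)y+ky=-(n-2-k)y$ at $k+1$, $0+0$ at $k+2$, and $(k+1)y$ beyond, exactly as claimed; the analogous correction applies to the second product. Note also that your proposed sanity check (coordinates summing to zero) cannot catch this error, since any permutation of $u$ added to $v$ still sums to zero. In fairness, the paper itself is loose about this convention (the displayed computations in the proof of Proposition \ref{prop31} twist in the opposite direction), but only one choice makes the present formulas come out, so the rule must be fixed before one can assert that the ``predicted values'' appear; everything else in your plan is fine and identical to the paper's proof.
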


\begin{proof}

By induction the case  $k=1$ being trivial let us look to the
general case.

$\xi_{(y,p)}(\sigma_{k+1}\cdots\sigma_1)=\xi_{(y,p)}(\sigma_{k+1})\xi_{(y,p)}(\sigma_{k}\cdots\sigma_1)=\\
=[y,\ldots,y,-(n-2)y,0,y,\ldots,y](k+1,k+2)[\underbrace{-(n-1-k)y,\ldots,
-(n-1-k)y}_k,0,ky,\ldots,ky](1,\ldots,k+1)=\\=[\underbrace{-(n-1-(k+1))y,
\ldots,-(n-1-(k+1))y}_{k+1},0,(k+1)y,\ldots,(k+1)y](1,\ldots,k+2)$

and

$\xi_{(y,p)}(\sigma_{1}\cdots\sigma_{k+1})=\xi_{(y,p)}(\sigma_{1}\cdots\sigma_k)\xi_{(y,p)}(\sigma_{k+1})=\\
=[-(nk-2k)y,\underbrace{(k-1)y,\ldots,(k-1)y}_k,ky,\ldots,ky](1,k+1,\ldots,2)[y,\ldots,y,-(n-2)y,0,y,\ldots,y](k+1,k+2)=\\
=[-(n(k+1)-2(k+1))y,\underbrace{ky,\ldots,ky}_{k+1},(k+1)y,\ldots,(k+1)y](1,k+2,\ldots,2).$

\end{proof}

\begin{lemma}\label{lemseq2}

Let $y,p\in\mathbb{Z}$, $\sigma_i \in A(\widetilde{A}_{n-1})$, $1\leq i \leq n-1$ we have:

$$\xi_{(y,p)}((\sigma_k\cdots\sigma_1)^{-1})=[0,\underbrace{(n-1-k)y,\ldots,(n-1-k)y}_k,-ky,\ldots,-ky](1,k+1,\ldots,2)$$

and

$$\xi_{(y,p)}((\sigma_1\cdots\sigma_k)^{-1})=[\underbrace{-(k-1)y,\ldots,-(k-1)y}_k,(nk-2k)y,-ky,\ldots,-ky](1,\ldots,k+1)$$

for $k<n$.

\end{lemma}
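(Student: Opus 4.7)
The plan is to derive Lemma \ref{lemseq2} directly from Lemma \ref{lemseq1} by inverting in $W(\widetilde{A}_{n-1})\simeq\mathbb{Z}^{n-1}\rtimes S_n$. In this semidirect product the inverse of $[v]s$ is given by $([v]s)^{-1}=[-s^{-1}(v)]\,s^{-1}$, where $s^{-1}(v)$ stands for the coordinate-permutation action of $s^{-1}$ on $v$ (in the same convention already in force in the proof of Lemma \ref{lemseq1}). Since Lemma \ref{lemseq1} supplies closed formulas for $\xi_{(y,p)}(\sigma_k\cdots\sigma_1)$ and $\xi_{(y,p)}(\sigma_1\cdots\sigma_k)$, each of the two identities in Lemma \ref{lemseq2} reduces to a single application of this inverse formula.

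For the first identity, I would write $\xi_{(y,p)}(\sigma_k\cdots\sigma_1)=[v]\,s$ with $s=(1,2,\ldots,k+1)$; then $s^{-1}=(1,k+1,k,\ldots,2)$, which is exactly the permutation appearing on the right-hand side of the claim. Under the coordinate action, $s^{-1}$ moves the entry originally at position $k+1$ of $v$ (namely $0$) into position $1$, slides the $k$ copies of $-(n-1-k)y$ from positions $1,\ldots,k$ into positions $2,\ldots,k+1$, and fixes the remaining $ky$ entries in positions $k+2,\ldots,n$. Negating yields precisely the stated vector. The second identity is handled symmetrically: the permutation $(1,k+1,k,\ldots,2)$ has inverse $(1,2,\ldots,k+1)$, and the corresponding coordinate shuffle sends $-(nk-2k)y$ from position $1$ to position $k+1$, the $k$ copies of $(k-1)y$ from positions $2,\ldots,k+1$ to positions $1,\ldots,k$, and leaves the trailing $ky$ entries fixed.

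The only subtlety is to invoke the same semidirect-product conventions (which coordinate action, which multiplication order) that are implicit in the forward computations of Lemma \ref{lemseq1}, so that the inverse formula is compatible with what is already on the page; after that, the argument is a short permutation-bookkeeping exercise, and I do not expect a genuine obstacle. A valid alternative, more parallel to Lemma \ref{lemseq1}, is a direct induction on $k$ using $(\sigma_k\cdots\sigma_1)^{-1}=(\sigma_{k-1}\cdots\sigma_1)^{-1}\sigma_k^{-1}$ together with the easily computed expression for $\xi_{(y,p)}(\sigma_k^{-1})$, but this is strictly longer than simply inverting the formulas already established.
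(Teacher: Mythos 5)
Your proposal is correct and takes essentially the same approach as the paper: the paper also derives Lemma \ref{lemseq2} directly from Lemma \ref{lemseq1}, its one-line proof being the observation that the stated elements multiply with the formulas of Lemma \ref{lemseq1} to give $[0,\ldots,0]$, i.e.\ that they are the inverses in $\mathbb{Z}^{n-1}\rtimes S_n$. Your explicit computation via $([v]s)^{-1}=[-s^{-1}(v)]\,s^{-1}$ (with the coordinate-action and composition conventions implicit in the proof of Lemma \ref{lemseq1}, which you correctly flag) is just the constructive form of that same verification, and your permutation bookkeeping checks out against both stated formulas.
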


\begin{proof}

Notice that $\xi_{(y,p)}((\sigma_k\cdots\sigma_1)^{-1})\xi_{(y,p)}(\sigma_k\cdots\sigma_1)=\xi_{(y,p)}(\sigma_k\cdots\sigma_1)\xi_{(y,p)}((\sigma_k\cdots\sigma_1)^{-1})=[0,\ldots,0]$ and $\xi_{(y,p)}((\sigma_1\cdots\sigma_k)^{-1})\xi_{(y,p)}(\sigma_1\cdots\sigma_k)=\xi_{(y,p)}(\sigma_1\cdots\sigma_k)\xi_{(y,p)}((\sigma_1\cdots\sigma_k)^{-1})=[0,\ldots,0].$

\end{proof}

\begin{lemma}\label{lemgentype1}

Let $g$ be a generator of $P(A(A_{n-1}))$. Then
$\xi_{(y,p)}(g)=[Q_1(y),\ldots,Q_n(y)]$ for some homogeneous polynomials
$Q_i$.

\end{lemma}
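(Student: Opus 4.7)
The plan is to exploit the fact that the translation parts of the $\xi_{(y,p)}$-images of the non-affine generators are uniformly linear in $y$ with integer coefficients, together with the way the semidirect product structure of $\mathbb{Z}^n\rtimes S_n$ combines these translation parts.

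First I would note the crucial point: the parameter $p$ appears only in $\xi_{(y,p)}(\sigma_n)$, and by definition the pure braid group $P(A(A_{n-1}))\subset A(A_{n-1})\hookrightarrow A(\widetilde{A}_{n-1})$ sits inside the subgroup generated by $\sigma_1,\dots,\sigma_{n-1}$. Hence any generator $g$ of $P(A(A_{n-1}))$ can be expressed as a word in $\sigma_1^{\pm 1},\dots,\sigma_{n-1}^{\pm 1}$, so $\xi_{(y,p)}(g)$ does not depend on $p$. Moreover, inspecting the formula from Proposition~\ref{prop31}, each translation coordinate of $\xi_{(y,p)}(\sigma_i)$ for $1\le i\le n-1$ is an integer multiple of $y$, i.e.\ a homogeneous polynomial of degree $1$ in $y$.

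Next I would show by induction on word length that if $w$ is any word in $\sigma_1^{\pm 1},\dots,\sigma_{n-1}^{\pm 1}$ then $\xi_{(y,p)}(w)=[R_1(y),\dots,R_n(y)]\,s$ with $R_j(y)$ homogeneous of degree $1$ in $y$. The inductive step uses the multiplication law $[u]s\cdot[v]t=[u+s\!\cdot\!v]\,st$ in $\mathbb{Z}^{n}\rtimes S_n$ together with the inverse law $([u]s)^{-1}=[-s^{-1}\!\cdot\!u]\,s^{-1}$: both operations act on coordinates by permutation, sign change, and $\mathbb{Z}$-linear combination, none of which disturbs the property of being a homogeneous polynomial of degree $1$ in $y$. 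Since the starting generators $\xi_{(y,p)}(\sigma_i^{\pm 1})$ already satisfy this, the property propagates through arbitrary words.

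Finally, since $g\in P(A(A_{n-1}))=\ker\mu|_{A(A_{n-1})}$, the permutation part $s$ of $\xi_{(y,p)}(g)$ is trivial, so $\xi_{(y,p)}(g)=[Q_1(y),\dots,Q_n(y)]$ with each $Q_i$ a homogeneous polynomial (of degree $1$) in $y$, as required.

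The argument is essentially structural and requires no heavy computation; the main point to be careful about is verifying that neither the $S_n$-action on $\mathbb{Z}^n$ nor the passage to inverses in the semidirect product can mix degrees in $y$, which is immediate once the multiplication and inversion formulas are written out. No genuine obstacle arises; the subtlety is simply making explicit that the induction runs over the free monoid on $\{\sigma_1^{\pm 1},\dots,\sigma_{n-1}^{\pm 1}\}$ and is independent of which standard generating set of $P(A(A_{n-1}))$ one prefers.
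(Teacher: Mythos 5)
Your proof is correct, and it takes a genuinely different route from the paper's. You argue structurally: the generators of $P(A(A_{n-1}))$ are words in $\sigma_1^{\pm1},\dots,\sigma_{n-1}^{\pm1}$ alone, the translation coordinates of $\xi_{(y,p)}(\sigma_i^{\pm1})$ for $i<n$ are integer multiples of $y$, and multiplication and inversion in $\mathbb{Z}^n\rtimes S_n$ only permute, negate and add coordinates, so every such word has image $[c_1y,\dots,c_ny]s$ with $c_i\in\mathbb{Z}$, and purity kills the permutation part $s$. (Your displayed product and inversion formulas use the mirror convention to the one the paper actually computes with --- the paper permutes the left factor's vector by the right factor's permutation --- but this is immaterial, since your argument only uses that coordinates transform by permutation, sign change and addition.) The paper instead computes $\xi_{(y,p)}(a_{ij})$ explicitly for the standard pure braid generators $a_{ij}=\sigma_{j-1}\cdots\sigma_{i+1}\sigma_i^2\sigma_{i+1}^{-1}\cdots\sigma_{j-1}^{-1}$ using the prepared Lemmas~\ref{lemseq1} and~\ref{lemseq2}, ending with a closed formula whose entries are $2y$ except for $-(n-2)y$ in two positions; indeed the paper concedes the lemma is ``trivial by the preceding lemmas'' and says the computation is done only ``to obtain more information about the polynomials $Q_i$.'' That extra information is not decorative: Remark~\ref{remgentype1} (for $n$ even, each $Q_i$ is a homogeneous polynomial in $2y$, i.e.\ has even coefficient) is read off from the explicit coordinates and later carries the entire proof that $\xi_{(y,p)}$ fails to be an epimorphism when $n$ and $p$ are both even. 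Your induction yields only ``integer multiple of $y$'' and cannot detect this parity, since $\xi_{(y,p)}(\sigma_i)$ itself has coordinates equal to $y$ with odd coefficient $1$; so your proof fully settles the lemma as stated but would leave the subsequent parity obstruction unsupported. If you want to recover it within your structural framework at almost no cost, observe that $a_{ij}$ is a conjugate of $\sigma_i^2$ and that conjugating a pure translation $[v]$ in $\mathbb{Z}^n\rtimes S_n$ merely permutes its coordinates; hence $\xi_{(y,p)}(a_{ij})$ is a coordinate permutation of $\xi_{(y,p)}(\sigma_i^2)=[2y,\dots,2y,-(n-2)y,-(n-2)y,2y,\dots,2y]$, which gives the paper's explicit formula, and with it the Remark, without any of the computations in Lemmas~\ref{lemseq1} and~\ref{lemseq2}.
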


\begin{proof}

This result is trivial by the preceding lemmas. We will write a
proof of it to be able to obtain more information about the
polynomials $Q_i$. It is well known (see \cite{birman}) that the elements
$$a_{ij}=\sigma_{j-1}\cdots\sigma_{i+1}\sigma_{i}^2\sigma_{i+1}^{-1}\cdots\sigma_{j-1}^{-1}$$
generate the pure braid group $P(A(A_{n-1}))$. So let us compute
$\xi_{(y,p)}(a_{ij})$ for all $i<j$:

$\xi_{(y,p)}(a_{ij})=\xi_{(y,p)}(\sigma_{j-1}\cdots\sigma_{1})(\xi_{(y,p)}(\sigma_{i}\cdots\sigma_{1}))^{-1}\xi_{(y,p)}(\sigma_{i}^2)
(\xi_{(y,p)}(\sigma_{j-1}\cdots\sigma_{1})(\xi_{(y,p)}(\sigma_{i}\cdots\sigma_{1}))^{-1})^{-1}$

So we start by computing

$\xi_{(y,p)}(\sigma_{j-1}\cdots\sigma_{1})(\xi_{(y,p)}(\sigma_{i}\cdots\sigma_{1}))^{-1}=[\underbrace{-(n-2-j)y,\ldots,-(n-2-j)y}_{j-1},
0,(j-1)y,\ldots,(j-1)y](1,\ldots,j) [0,\underbrace{(n-1-i)y,\ldots,(n-1-i)y}_i,-iy,\ldots,-iy](1,i+1,\ldots,2)=\\
=[\underbrace{(j-i+1)y,\ldots,(j-i+1)y}_i,\underbrace{-(n-2-j+i)y,\ldots,-(n-2-j+i)y}_{j-i-1},0,(j-1-i)y,\ldots,(j-1-i)y](i+1,\ldots,j).$

now we have

$(\xi_{(y,p)}(\sigma_{j-1}\cdots\sigma_{1})(\xi_{(y,p)}(\sigma_{i}\cdots\sigma_{1}))^{-1})^{-1}=[-(j-i+1)y,\ldots,-(j-i+1)y,
0,\underbrace{(n-2-j+i)y,\ldots,(n-2-j+i)y}_{j-i-1},-(j-1-i)y,\ldots,-(j-1-i)y](i+1,j,\ldots,i+2).$

For the missing factor it is easy to compute and we have:

$\xi_{(y,p)}(\sigma_i^2)=[y,\ldots,y,-(n-2)y,0,y,\ldots,y](i+1,i+2)[y,
\ldots,y,-(n-2)y,0,y,\ldots,y](i,i+1)=\\
=[2y,\ldots,2y,-(n-2)y,-(n-2)y,2y,\ldots,2y].$

So

$\xi_{(y,p)}(a_{ij})=\xi_{(y,p)}(\sigma_{j-1}\cdots\sigma_{1})(\xi_{(y,p)}(\sigma_{i}\cdots\sigma_{1}))^{-1}\xi_{(y,p)}(\sigma_{i}^2)
(\xi_{(y,p)}(\sigma_{j-1}\cdots\sigma_{1})(\xi_{(y,p)}(\sigma_{i}\cdots\sigma_{1}))^{-1})^{-1}=\\
=[\underbrace{(j-i+3)y,\ldots,(j-i+3)y}_{i-1},(j-i+3-n)y,\underbrace{-(n-4-j+i)y,\ldots,-(n-4-j+i)y}_{j-i-1},
-(n-2)y,\underbrace{(j-i+1)y,\ldots,(j-i+1)y}_{n-j+1}](i+1,\ldots,j)
(\xi_{(y,p)}(\sigma_{j-1}\cdots\sigma_{1})(\xi_{(y,p)}(\sigma_{i}\cdots\sigma_{1}))^{-1})^{-1}=\\
=[\overbrace{\underbrace{2y,\ldots,2y}_{i-1},-(n-2)y,2y,\ldots,2y}^{j-1},-(n-2)y,2y,\ldots,2y]$

\end{proof}

\begin{remark}\label{remgentype1}

We notice, from the proof of lemma \ref{lemgentype1} that not only
$\xi_{(y,p)}(g)=[Q_1(y),\ldots,Q_n(y)]$ for some homogeneous polynomials
$Q_i$, but in the case were $n$ is even, we have
$$\xi_{(y,p)}(g)=[Q'_1(2y),\ldots,Q'_n(2y)].$$

\end{remark}

\begin{lemma}\label{lemgentype2}

Let $g\in\phi(\pi_1(p(\mathcal{G}),id))$ be a generator such that

$$
g=s\sigma_nt^{-1}.
$$

For some simple elements $s,t\in A_{n-1}$. Then
$$\xi_{(y,p)}(g)=[Q_1(y,p),\ldots,Q_n(y,p)].$$
Were $Q_i$ are homogeneous polynomials with integer coefficients.

\end{lemma}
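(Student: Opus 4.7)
The plan is to track the behavior of the $\mathbb{Z}^n$-component of $\xi_{(y,p)}$ factor by factor, exploiting the multiplication rule $([A]\pi)([B]\tau)=[A+\pi\cdot B]\pi\tau$ in the semidirect product $\mathbb{Z}^n\rtimes S_n$, where $\pi\cdot B$ denotes the permutation of the coordinates of $B$ by $\pi$. Under this rule, the $\mathbb{Z}^n$-coordinate of any product is an integer combination of permuted copies of the $\mathbb{Z}^n$-coordinates of its factors.

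First I would observe that for $1\le i<n$ the image
$$\xi_{(y,p)}(\sigma_i)=[\underbrace{y,\ldots,y}_{i-1},-(n-2)y,0,y,\ldots,y](i,i+1)$$
has $\mathbb{Z}^n$-part whose entries are homogeneous polynomials of degree one in $y$ (and independent of $p$), while
$$\xi_{(y,p)}(\sigma_n)=[p,y,\ldots,y,-(n-2)y-p](1,n)$$
has $\mathbb{Z}^n$-part whose entries are homogeneous of degree one in $y,p$. Consequently, writing $s$ and $t$ as positive words in $\sigma_1,\ldots,\sigma_{n-1}$ (which is possible since by hypothesis they are simple elements of $A(A_{n-1})$), a straightforward induction on length, driven by the multiplication rule above, yields
$$\xi_{(y,p)}(s)=[V_s(y)]\pi_s,\qquad \xi_{(y,p)}(t^{-1})=[V_{t^{-1}}(y)]\pi_t^{-1},$$
with every coordinate of $V_s$ and $V_{t^{-1}}$ a homogeneous linear polynomial in $y$ with integer coefficients. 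This is, in spirit, just a repetition of the computation already performed in Lemma \ref{lemgentype1}, without ever introducing the variable $p$.

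Finally, combining these ingredients through the semidirect-product law gives
$$\xi_{(y,p)}(g)=\bigl[V_s(y)+\pi_s\cdot[p,y,\ldots,y,-(n-2)y-p]+\pi_s(1,n)\cdot V_{t^{-1}}(y)\bigr]\pi_s(1,n)\pi_t^{-1},$$
and because $g\in ker(p\circ\xi)$ the permutation $\pi_s(1,n)\pi_t^{-1}$ collapses to the identity. The bracketed vector is then an integer linear combination of $y$ and $p$ coordinate by coordinate, so each entry $Q_i(y,p)$ is a homogeneous polynomial of degree one in $y,p$ with integer coefficients, as required. I do not anticipate any genuine obstacle: the preceding lemma already handled the portion coming from $s$ and $t$, and the only new contribution is the single linear summand coming from the lone occurrence of $\sigma_n$ inside $g$.
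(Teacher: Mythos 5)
Your argument is correct, but it takes a genuinely different route from the paper's. The paper proves this lemma by induction on $l(t)$ (after assuming $l(s)\geq l(t)$): the base case $l(t)=0$ forces $s$ to be the simple element representing $(1,n)$ and is computed explicitly using Lemma \ref{lemseq1}, and the inductive step peels off an initial letter $\sigma_k$ of $t$, splitting into the case where $\sigma_k$ is also an initial letter of $s$ (so that $g$ is a conjugate $\sigma_k(s_1\sigma_n t_1^{-1})\sigma_k^{-1}$) and the case where it is not (where one inserts $\sigma_k^{-1}\sigma_k$ and uses that $\sigma_k s_1$ is still simple), carrying out the conjugations coordinatewise. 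You instead argue structurally from the semidirect-product law: the $\mathbb{Z}^n$-part of any product is an integer combination of permuted copies of the $\mathbb{Z}^n$-parts of the factors; the factors coming from $s$ and $t^{-1}$ (positive words in $\sigma_1,\ldots,\sigma_{n-1}$, inverses included via $([A]\pi)^{-1}=[-\pi^{-1}A]\pi^{-1}$) contribute integer multiples of $y$; the lone $\sigma_n$ contributes linear forms in $y,p$; and the permutation part is trivial because $g\in \ker(p\circ\xi)$ by the proposition of section 5 (equivalently, $\pi_s(1,n)=\pi_t$ by the very construction of these generators). Your route is shorter, needs no case analysis, no base-case computation, and no normalization $l(s)\geq l(t)$. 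What the paper's heavier computation buys, and yours does not, is the finer information recorded in Remark \ref{remgentype2}: when $n$ is even, $\xi_{(y,p)}(g)=[Q'_1(2y,p),\ldots,Q'_n(2y,p)]$, i.e., every $y$-coefficient is even. That parity refinement is precisely what is used later to show that $\xi_{(y,p)}$ is not an epimorphism when $n$ and $p$ are both even, and it is invisible to your coarse bookkeeping, since individual coordinates of $V_s$ (a bare $y$, for instance) are odd multiples of $y$; the evenness only emerges from the explicit induction (base-case coefficients $-(n-2)(n-1)$ and $n-2$, increments $\pm 2y$ and $(n-2)y$). So your proof fully establishes the lemma as stated, but the remark that follows it would require either the paper's explicit computation or a supplementary parity argument grafted onto yours.
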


\begin{proof}

By Lemma \ref{lemma2ndgenerators} we know that it suffices to check this type of generators.
We will identify $s$ and $t$ with its images in $S_n$. Now suppose
that $l(s)\geq l(t)$, we will proceed by induction on $l(t)$.

If $l(t)=0$ then
$s=\sigma_1\cdots\sigma_{n-1}\sigma_{n-2}\cdots\sigma_1$ which is
the simple element with associated permutation $(1,n)$. We have

$\xi_{(y,p)}(g)=\xi_{(y,p)}(s)\xi_{(y,p)}(\sigma_n)=\xi_{(y,p)}(\sigma_1\cdots\sigma_{n-1})\xi_{(y,p)}(\sigma_{n-2}\cdots\sigma_1)\xi_{(y,p)}(\sigma_n)= \\
=[-(n(n-1)-2(n-1))y,(n-2)y,\ldots,(n-2)y](1,n,\ldots,2)
[\underbrace{-y,\ldots,-y}_{n-2},0,(n-2)y](1,\ldots,n-1)\xi_{(y,p)}(\sigma_n)=\\
=[-(n(n-1)-(3n-4))y,(n-3)y,\ldots,(n-3)y,(n-2)y](1,n)
[p,y,\ldots,y,-(n-2)y-p](1,n)=\\
=[-(n(n-1)-2(n-1))y-p,(n-2)y,\ldots,(n-2)y,(n-2)y+p].$

If the result is true for $l(t)=r$, suppose that $\sigma_k$ is a
initial letter of $t$, then we write:

$$g=s\sigma_nt^{-1}=s\sigma_nt_1^{-1}\sigma_k^{-1}.$$

Suppose that $\sigma_k$ is also an initial letter of $s$, then

$$g=\sigma_k s_1\sigma_nt_1^{-1}\sigma_k^{-1}.$$

Now we know that
$\xi_{(y,p)}(s_1\sigma_nt_1^{-1})=[Q_1(y,p),\ldots,Q_n(y,p)]$ by
hypothesis and so

$\xi_{(y,p)}(g)=[\underbrace{y,\ldots,y,-(n-2)y}_k,0,y,\ldots,y](k,k+1)[Q_1(y,p),\ldots,Q_n(y,p)]
[\underbrace{-y,\ldots,-y,0}_k,(n-2)y,-y,\ldots,-y](k,k+1)=\\
=[y+Q_1(y,p),\ldots,y+Q_{k-1}(y,p),-(n-2)y+Q_{k+1}(y,p),Q_{k}(y,p),
y+Q_{k+2}(y,p),\ldots,y+Q_n(y,p)](k,k+1)[-y,\ldots,-y,0,(n-2)y,-y,\ldots,-y](k,k+1)=\\
=[Q_1(y,p),\ldots,Q_{k-1}(y,p),Q_{k+1}(y,p),Q_{k}(y,p),
Q_{k+2}(y,p),\ldots,Q_n(y,p)].$

Suppose now that $\sigma_k$ is not an initial letter of $s$, then

$$g=\sigma_k^{-1}\sigma_k s_1\sigma_nt_1^{-1}\sigma_k^{-1}.$$

So we have $\sigma_k s_1$ is a simple element hence $\sigma_k
s_1\sigma_nt_1^{-1}$ is a generator and $\xi_{(y,p)}(\sigma_k
s_1\sigma_nt_1^{-1})=[Q_1(y,p),\ldots,Q_n(y,p)]$ and
$\xi_{(y,p)}(g)=[\underbrace{-y,\ldots,-y}_{k-1},0,(n-2)y,-y,\ldots,-y](k,k+1)
[Q_1(y,p),\ldots,Q_n(y,p)]
[\underbrace{-y,\ldots,-y}_{k-1},0,(n-2)y,-y,\ldots,-y](k,k+1)=\\
=[-y+Q_1(y,p),\ldots,-y+Q_{k-1}(y,p),Q_{k+1}(y,p),(n-2)y+Q_{k}(y,p),
-y+Q_{k+2}(y,p),\ldots,-y+Q_n(y,p)](k,k+1)
[\underbrace{-y,\ldots,-y}_{k-1},0,(n-2)y,-y,\ldots,-y](k,k+1)=\\
=[-2y+Q_1(y,p),\ldots,-2y+Q_{k-1}(y,p),(n-2)y+Q_{k+1}(y,p),(n-2)y+Q_{k}(y,p),
-2y+Q_{k+2}(y,p),\ldots,-2y+Q_n(y,p)].$

\end{proof}

\begin{remark}\label{remgentype2}

We notice, from the proof of lemma \ref{lemgentype2} that not only
$\xi_{(y,p)}(g)=[Q_1(y,p),\ldots,Q_n(y,p)]$ for some homogeneous
polynomials $Q_i$, but in the case were $n$ is even, we have
$$\xi_{(y,p)}(g)=[Q'_1(2y,p),\ldots,Q'_n(2y,p)].$$

\end{remark}

\begin{proposition}\label{proppoly}

Let $g\in \phi(\pi_1(p(\mathcal{G}),id))$ then
$$\xi_{(y,p)}(g)=[Q_1(y,p),\ldots,Q_n(y,p)].$$
Were $Q_i$ are homogeneous polynomials with integer coefficients.

\end{proposition}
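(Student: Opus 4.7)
The plan is to leverage the generating set for $\phi(\pi_1(p(\mathcal{G}),id))$ constructed in Section 5 together with Lemmas \ref{lemgentype1} and \ref{lemgentype2}. That generating set is the union of two families: the Artin generators $a_{ij}$ of the pure braid group $P(A(A_{n-1}))$ (coming from the natural injection $A(A_{n-1}) \hookrightarrow A(\widetilde{A}_{n-1})$), and the elements of the form $s\sigma_n t^{-1}$ with $s,t$ simple in $A_{n-1}$, as in Lemma \ref{lemma2ndgenerators}. Any $g \in \phi(\pi_1(p(\mathcal{G}),id))$ is therefore a word in these generators and their inverses.

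The first step is to observe that Lemmas \ref{lemgentype1} and \ref{lemgentype2} give more than their bare statements. Inspection of the explicit coordinates produced in those proofs shows that every such generator is sent by $\xi_{(y,p)}$ into the abelian subgroup $\mathbb{Z}^{n-1} \subset W(\widetilde{A}_{n-1})$ (the permutation part is trivial, consistent with $g \in \ker(p\circ\xi)$), and moreover every coordinate of that image is a \emph{degree one} homogeneous polynomial in $y$ (type 1) or in $y,p$ (type 2), with integer coefficients; only entries of the form $cy$ and $cy+dp$, $c,d\in\mathbb{Z}$, appear.

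The second step is purely formal. Inside $\mathbb{Z}^{n-1}$ the group operation is coordinatewise addition, so if $\xi_{(y,p)}(g_1) = [Q_1^{(1)},\ldots,Q_n^{(1)}]$ and $\xi_{(y,p)}(g_2) = [Q_1^{(2)},\ldots,Q_n^{(2)}]$ are both pure translations with homogeneous degree-$1$ integer entries, then $\xi_{(y,p)}(g_1 g_2) = [Q_1^{(1)} + Q_1^{(2)},\ldots,Q_n^{(1)} + Q_n^{(2)}]$ again has homogeneous degree-$1$ integer entries, and similarly $\xi_{(y,p)}(g_1^{-1}) = [-Q_1^{(1)},\ldots,-Q_n^{(1)}]$. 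A straightforward induction on the length of a word in the chosen generators expressing $g$ then delivers the proposition.

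The only mild obstacle is guaranteeing that homogeneity is preserved under summation; that reduces to the observation that both families of generators produce polynomials of the \emph{same} homogeneity degree, namely $1$. Since that is visible directly from the formulas in Lemmas \ref{lemgentype1} and \ref{lemgentype2}, no further work is needed and the proof is essentially a one-line induction on top of those two computational lemmas.
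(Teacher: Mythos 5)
Your proof is correct and takes essentially the same route as the paper: the paper's proof also reduces to the two families of generators from Section 5 (the pure braid generators and the elements $s\sigma_n t^{-1}$ of Lemma \ref{lemma2ndgenerators}) and invokes Lemmas \ref{lemgentype1} and \ref{lemgentype2}. The only difference is that you make explicit the reduction step the paper calls obvious, namely that coordinatewise addition and negation in the translation subgroup preserve homogeneous integer-coefficient entries.
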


\begin{proof}

It is obvious that we can suppose that $g$ is a generator of
$\phi(\pi_1(p(\mathcal{G}),id)$. As we saw in section 3 there are
two types of generator, those appearing as generators of
$P(A(A_{n-1}))$ and the ones in which we have the occurrence of
$\sigma_n$. The cases are both treated in lemmas \ref{lemgentype1}
and \ref{lemgentype2} respectively.

\end{proof}

Now the first result concerning the parameters is:

\begin{proposition}

If $gcd(y,p)\neq1$ then $\xi_{(y,p)}$ is not an epimorphism.

\end{proposition}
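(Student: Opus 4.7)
The plan is to apply Proposition \ref{propcrit} to the restriction of $\xi_{(y,p)}$ to the kernel of $p \circ \xi_{(y,p)}$, exploiting the homogeneity information established in Proposition \ref{proppoly}.

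First I would reduce surjectivity of $\xi_{(y,p)}$ to surjectivity of a morphism into $\mathbb{Z}^{n-1}$. If $\xi_{(y,p)}$ is an epimorphism onto $W(\widetilde{A}_{n-1}) \cong \mathbb{Z}^{n-1} \rtimes S_n$, then in particular every element of $\ker p \cong \mathbb{Z}^{n-1}$ must have a preimage, and that preimage lies in $\ker(p\circ\xi_{(y,p)})$. Hence the restriction $\xi_{(y,p)}|_{\ker(p\circ\xi_{(y,p)})}$ must surject onto $\ker p \cong \mathbb{Z}^{n-1}$. By the proposition in Section 5, this kernel equals $\phi(\pi_1(p(\mathcal{G}),id))$, and a generating set is described by the pure braid generators $a_{ij}$ together with the generators $s\sigma_n t^{-1}$ of Lemma \ref{lemma2ndgenerators}.

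Next I would invoke Proposition \ref{proppoly}: for every generator $g$ of $\phi(\pi_1(p(\mathcal{G}),id))$, one has
$$\xi_{(y,p)}(g)=[Q_1(y,p),\ldots,Q_n(y,p)],$$
with each $Q_i$ a homogeneous polynomial with integer coefficients (and no constant term). Setting $d=\gcd(y,p)>1$, every monomial $c\,y^a p^b$ appearing in $Q_i$ has $a+b\geq 1$, so $c\,y^a p^b$ is divisible by $d^{a+b}\geq d$. Therefore $d \mid Q_i(y,p)$ for every $i$ and for every generator $g$.

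This says that every entry of the matrix $M_{\xi_{(y,p)}|_{\ker(p\circ\xi_{(y,p)})}}$ is divisible by $d$, so every $r\times r$ minor is divisible by $d^r$, in particular no minor equals $\pm 1$. By Proposition \ref{propcrit} (equivalently Proposition \ref{propcrit1} with $r=n-1$), the restriction cannot surject onto $\mathbb{Z}^{n-1}$, contradicting the first step. Hence $\xi_{(y,p)}$ is not an epimorphism. The one point that requires care is the homogeneity argument for divisibility, but since Proposition \ref{proppoly} already guarantees homogeneity with integer coefficients, this reduces to the elementary observation that a homogeneous form of positive degree with integer coefficients vanishes modulo any common divisor of its arguments; all heavy lifting was done in Section 7 when establishing Proposition \ref{proppoly}.
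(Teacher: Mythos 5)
Your proposal is correct and follows essentially the same route as the paper: the paper's proof also deduces from Proposition \ref{proppoly} that, for $q=\gcd(y,p)>1$, the image of $\ker(p\circ\xi_{(y,p)})$ lies in $(q\mathbb{Z})^{n}$, which precludes surjectivity onto the translation lattice $\mathbb{Z}^{n-1}$. You merely make explicit the steps the paper leaves implicit (the reduction to the restriction on $\ker(p\circ\xi_{(y,p)})$ and the appeal to the determinant criterium of Proposition \ref{propcrit}), and your homogeneity-based divisibility argument is exactly the intended one.
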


\begin{proof}

Suppose that $gcd(y,p)=q\neq1$. By proposition \ref{proppoly} we
can conclude that $Im(\xi_{(y,p)})\subset (q\mathbb{Z})^{n}$.

\end{proof}

For the rest of this section we will suppose that $gcd(y,p)=1$.

In order to use the criterium described above, it is sufficient to
verify it for a submatrix of $M$. In order to do so we will choose
some particular elements in the image of $\xi_{(y,p)}$ and show after that
this submatrix, on the first $n-1$ columns, verify the hypothesis
of the criterium.

Let us define:

$$g_k=\sigma_{k-1}\cdots\sigma_1\sigma_{k+1}\cdots\sigma_n(\sigma_{k}\cdots\sigma_1
\sigma_{k+1}\cdots\sigma_{n-1})^{-1}\text{ for }k=1,\ldots,n-1;$$
$$g_n=\sigma_n^2;$$
$$g_{n+i}=\sigma_i^2.$$

\begin{lemma}\label{lemmagk}

In the previous conditions we have:

$$\xi_{(y,p)}(g_k)=[\underbrace{0,\ldots,0}_{k-1},p+(n^2-nk-n)y,-p-(n^2-nk-n)y,0,\ldots,0].$$

\end{lemma}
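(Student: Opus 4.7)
The plan is to evaluate $\xi_{(y,p)}(g_k)$ by direct computation. I would first write $g_k = uv^{-1}$ where
$$u = \sigma_{k-1}\cdots\sigma_1\,\sigma_{k+1}\cdots\sigma_n \qquad\text{and}\qquad v = \sigma_k\cdots\sigma_1\,\sigma_{k+1}\cdots\sigma_{n-1},$$
noting that only $u$ involves the generator $\sigma_n$, so all of the $p$-dependence of the final answer will come from this factor.

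The two ingredients needed are $\xi_{(y,p)}(\sigma_{k-1}\cdots\sigma_1)$ and $\xi_{(y,p)}(\sigma_k\cdots\sigma_1)$, given directly by Lemma \ref{lemseq1}, together with $\xi_{(y,p)}(\sigma_{k+1}\cdots\sigma_{n-1})$, which I would establish by an induction analogous to that of Lemma \ref{lemseq1}: the argument carries over to this shifted range of indices and produces a vector whose first $k$ coordinates equal $(n-1-k)y$, whose $(k+1)$-st coordinate equals $-(n-1-k)(n-2)y$, and whose last $n-k-1$ coordinates equal $(n-2-k)y$, with permutation $(k+1, n, n-1, \ldots, k+2)$. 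Multiplying this on the right by $\xi_{(y,p)}(\sigma_n) = [p, y, \ldots, y, -(n-2)y-p](1, n)$ inserts the parameter $p$ and yields $\xi_{(y,p)}(\sigma_{k+1}\cdots\sigma_n)$, whose permutation is the $(n-k)$-cycle $(1, n, n-1, \ldots, k+1)$.

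I would then compose on the left with $\xi_{(y,p)}(\sigma_{k-1}\cdots\sigma_1)$ to obtain $\xi_{(y,p)}(u)$, and similarly compute $\xi_{(y,p)}(v)$ by composing $\xi_{(y,p)}(\sigma_k\cdots\sigma_1)$ with $\xi_{(y,p)}(\sigma_{k+1}\cdots\sigma_{n-1})$. A direct check shows that both $\xi_{(y,p)}(u)$ and $\xi_{(y,p)}(v)$ have the same $n$-cycle permutation $(1, 2, \ldots, k, n, n-1, \ldots, k+1)$, consistent with $g_k \in \ker(p\circ\xi_{(y,p)})$. Computing $\xi_{(y,p)}(v)^{-1}$ via the standard semidirect-product inverse formula and then multiplying it on the right of $\xi_{(y,p)}(u)$ produces an element of identity permutation whose coordinates simplify: positions $1, \ldots, k-1$ and $k+2, \ldots, n$ cancel completely, and positions $k$ and $k+1$ reduce to $p + n(n-1-k)y = p + (n^2 - nk - n)y$ and $-p - (n^2 - nk - n)y$ respectively.

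The main obstacle is the bookkeeping associated with the full $n$-cycle permutation: when applying the semidirect-product rule, each coordinate can pull from essentially any coordinate of the other factor, so the identifications must be tracked very carefully and the $y$-terms assembled correctly to give the identity $n(n-1-k)$. A secondary subtlety is the boundary cases $k = 1$ and $k = n-1$, in which respectively $\sigma_{k-1}\cdots\sigma_1$ or $\sigma_{k+1}\cdots\sigma_{n-1}$ is empty and some coordinate ranges in the general formulas collapse; in both cases the computation simplifies and can be checked directly to agree with the claimed formula.
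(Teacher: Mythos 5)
Your proposal is correct, and it reaches the result by a genuinely different decomposition than the paper. The paper's proof never computes $\xi_{(y,p)}(\sigma_{k+1}\cdots\sigma_{n-1})$ directly: it inserts trivial factors to rewrite $g_k$ as $\sigma_{k-1}\cdots\sigma_1(\sigma_1\cdots\sigma_k)^{-1}(\sigma_{1}\cdots\sigma_{n-1})\sigma_n(\sigma_{1}\cdots\sigma_{n-1})^{-1}(\sigma_{1}\cdots\sigma_{k})(\sigma_{k}\cdots\sigma_{1})^{-1}$, using the identity $\sigma_{k+1}\cdots\sigma_{n-1}=(\sigma_1\cdots\sigma_k)^{-1}\sigma_1\cdots\sigma_{n-1}$, so that every block is already covered by Lemmas \ref{lemseq1} and \ref{lemseq2}; it then evaluates the two halves (the part up to $\sigma_n$ and the remaining tail) and multiplies, at the cost of a case split on the parity of $k$ when writing an intermediate permutation as a product of cycles. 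You instead keep the natural factorization $g_k=uv^{-1}$ and supply the missing ingredient yourself, namely a shifted analogue of Lemma \ref{lemseq1} for $\xi_{(y,p)}(\sigma_{k+1}\cdots\sigma_{n-1})$; your claimed intermediate vector is right (its coordinates sum to zero, and it checks against small cases, e.g.\ for $n=5$, $k=1$ one gets $[3y,-9y,2y,2y,2y]$ with the $4$-cycle on $\{2,3,4,5\}$, up to the paper's cycle-orientation convention), your observation that $\xi(u)$ and $\xi(v)$ carry the same $n$-cycle is correct, and the final cancellation does produce $[0,\ldots,0,p+(n^2-nk-n)y,-p-(n^2-nk-n)y,0,\ldots,0]$ with the nonzero entries in positions $k$, $k+1$ (verified, e.g., for $n=4$, $k=2$: $[0,p+4y,-p-4y,0]$). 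What each route buys: the paper's rewriting avoids proving any new lemma but pays with heavier word manipulation and the parity distinction; your route requires one extra (routine) induction but uses the natural two-factor structure of $g_k$, avoids the parity split entirely, and isolates all $p$-dependence cleanly in the factor $u$. Your flagged boundary cases $k=1$ and $k=n-1$ are exactly the right ones to check, and both do degenerate harmlessly as you say.
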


\begin{proof}

This proof is a simple and direct computation using lemmas
\ref{lemseq1} and \ref{lemseq2}. Let us decompose

$$g_k=\sigma_{k-1}\cdots\sigma_1(\sigma_1\cdots\sigma_k)^{-1}(\sigma_{1}\cdots\sigma_{n-1})\sigma_n
(\sigma_{1}\cdots\sigma_{n-1})^{-1}(\sigma_{1}\cdots\sigma_{k})(\sigma_{k}\cdots\sigma_{1})^{-1}.$$

Assume for now that $k$ is even. We will start by computing

$\sigma_{k-1}\cdots\sigma_1(\sigma_1\cdots\sigma_k)^{-1}(\sigma_{1}\cdots\sigma_{n-1})\sigma_n=\\
=[\underbrace{-(n-1)y,\ldots,-(n-1)y}_{k-1},-(k-1)y,
(nk-k-1)y,-y,\ldots,-y](1,3,\ldots,k+1)(2,4,\ldots,k)
(\sigma_{1}\cdots\sigma_{n-1})\sigma_n=\\
=[\underbrace{-y,\ldots,-y}_{k-1},(n-k-1)y,
(-n^2+3n-k+nk-3)y,\underbrace{(n-3)y,\ldots,(n-3)y}_{n-k-1}]
(1,\ldots,k)(n,\ldots,k+1)\sigma_n=\\
=[\underbrace{0,\ldots,0}_{k-1},(n-k-1)y+p,
-p+(-n^2+2n-k+nk-1)y,\underbrace{(n-2)y,\ldots,(n-2)y}_{n-k-1}]
(1,\ldots,k,n,\ldots_k+1).$

Assume for now that $k$ is odd the only change is in step 1, we
must replace the permutation $(1,3,\ldots,k+1)(2,4,\ldots,k)$ by
$(1,3,\ldots,k,2,4,\ldots,k+1)$ but after we obtain exactly the
same result.

Now we compute

$(\sigma_{1}\cdots\sigma_{n-1})^{-1}(\sigma_{1}\cdots\sigma_{k})(\sigma_{k}\cdots\sigma_{1})^{-1}=\\
=[\underbrace{-(n-k-1)y,\ldots,-(n-k-1)y}_{k},\underbrace{-(n-k-2)y,\ldots,-(n-k-2)y}_{n-k-1},
(n^2-3n+2-nk-2k)y](k+1,\ldots,n)(\sigma_{k}\cdots\sigma_{1})^{-1}=\\
=[-(n-k-1)y,\underbrace{0,\ldots,0}_{k-1},-(n-2)y,\ldots,-(n-2)y,
(n^2-2n-nk+2k+1)y](k+1,\ldots,n,k,\ldots,1).$

So we have our result by multiplying these to elements in order to
obtain $\xi_{(y,p)}(g_k)$.

\end{proof}

A direct consequence of Lemma \ref{lemmagk} is:

\begin{lemma}\label{lemmagn}
For $i=1,\ldots,n-1$ we have:
$$\xi_{(y,p)}(g_{n})=[-(n-2)y,2y,\ldots,2y,-(n-2)y].$$
$$\xi_{(y,p)}(g_{n+i})=[\underbrace{2y,\ldots,2y}_{i-1},-(n-2)y,-(n-2)y,2y,\ldots,2y].$$
\end{lemma}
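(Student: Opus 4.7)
The plan is to verify both formulas by direct computation in the semidirect product $\mathbb{Z}^n \rtimes S_n$, using the multiplication rule $[v_1]\pi_1 \cdot [v_2]\pi_2 = [v_1 + \pi_1(v_2)]\pi_1\pi_2$, where $\pi_1$ acts on $v_2$ by permuting its coordinates. Since $g_n$ and $g_{n+i}$ are each squares of a single generator, the calculations are very short; the only subtlety is keeping track of which coordinates get swapped by the permutation part.

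For $g_n = \sigma_n^2$, I would start from $\xi_{(y,p)}(\sigma_n) = [p, y, \ldots, y, -(n-2)y-p](1,n)$ and square it. The permutation part is $(1,n)^2 = \mathrm{id}$. The permutation $(1,n)$ applied to the second copy of the vector swaps the first and last entries, yielding $[-(n-2)y-p, y, \ldots, y, p]$, and adding componentwise to the first copy gives exactly $[-(n-2)y, 2y, \ldots, 2y, -(n-2)y]$.

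For $g_{n+i} = \sigma_i^2$ with $1 \le i \le n-1$, the same procedure applies to $\xi_{(y,p)}(\sigma_i) = [y,\ldots, y, -(n-2)y, 0, y,\ldots, y](i,i+1)$, whose nonstandard entries are at positions $i$ and $i+1$. Squaring gives identity permutation, and the swap $(i,i+1)$ applied to the vector moves $-(n-2)y$ to position $i+1$ and $0$ to position $i$; adding produces $2y$ in every coordinate except positions $i$ and $i+1$, where both entries equal $-(n-2)y$. This is the required formula, and it in fact already appeared inside the computation of $\xi_{(y,p)}(\sigma_i^2)$ in the proof of Lemma \ref{lemgentype1}.

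There is no genuine obstacle here: both statements are two-line unfoldings of the semidirect product rule, and the ``direct consequence of Lemma \ref{lemmagk}'' phrasing in the statement simply reflects that the same computational mechanism is at play, not that these squares must be obtained by specialising the longer words $g_k$.
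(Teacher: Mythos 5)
Your computation is correct: both identities follow from squaring the images in $\mathbb{Z}^n \rtimes S_n$, and since each factor's permutation part is an involution swapping exactly the two distinguished coordinates, the resulting vector $v+\tau v$ is the same under either convention for the semidirect-product action, so your stated multiplication rule causes no discrepancy with the paper's computations. The paper gives no separate proof -- it merely labels the lemma a ``direct consequence'' of Lemma \ref{lemmagk} -- but the computation of $\xi_{(y,p)}(\sigma_i^2)$ appears verbatim inside the proof of Lemma \ref{lemgentype1} (it is the case $j=i+1$ of $\xi_{(y,p)}(a_{ij})$), so your direct squaring is exactly the justification the paper implicitly relies on, as you correctly observe.
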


We must now compute some determinants in order to prove later that
their $gcd$ is $1$. Using as a list of generators
$g_1,\ldots,g_{2n-1}$ defined above we have:

\begin{lemma}\label{lemgcd1}

Let $k=1,\ldots,n-1$, then
$$gcd(y,p-(n^2-nk-n)y)=1.$$

\end{lemma}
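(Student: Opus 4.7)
The plan is to reduce the claim immediately to the standing assumption $\gcd(y,p)=1$ (stated just before Lemma \ref{lemmagk} for the remainder of the section). First I would let $d$ be any positive common divisor of $y$ and $p-(n^2-nk-n)y$. Since $d\mid y$ and $(n^2-nk-n)$ is an integer (here I use that $n$ and $k$ are integers with $1\le k\le n-1$, so the coefficient is well-defined regardless of its sign), we get $d\mid (n^2-nk-n)y$.

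Next I would add: writing
\[
p=\bigl(p-(n^2-nk-n)y\bigr)+(n^2-nk-n)y,
\]
both summands are divisible by $d$, hence $d\mid p$. Combining with $d\mid y$, we obtain $d\mid \gcd(y,p)=1$, so $d=1$. This forces $\gcd(y,p-(n^2-nk-n)y)=1$, as required.

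There is essentially no obstacle here: the statement is a one-line consequence of the elementary fact that $\gcd(a,b)=\gcd(a,b+ca)$ for any integer $c$, applied with $a=y$, $b=p$, $c=-(n^2-nk-n)$. The only thing worth noting is that the result is uniform in $k\in\{1,\ldots,n-1\}$ precisely because the reduction never uses anything about the coefficient $(n^2-nk-n)$ beyond its being an integer. This lemma will be combined in the sequel with the other $\gcd$ computations to verify the criterion of Proposition \ref{propcrit}, so the real work lies in the subsequent determinant calculations, not in this preparatory step.
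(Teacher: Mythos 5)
Your proof is correct and is essentially the same argument as the paper's: the paper assumes a common divisor $q>1$ of $y$ and $p-(n^2-nk-n)y$ and derives $p=q(t_2+(n^2-nk-n)t_1)$, contradicting the standing assumption $\gcd(y,p)=1$, which is exactly your reduction via $p=\bigl(p-(n^2-nk-n)y\bigr)+(n^2-nk-n)y$ phrased contrapositively. No further comment is needed.
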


\begin{proof}

Suppose that we have $p-(n^2-nk-n)y=qt_1$ and $y=qt_2$ with $q>1$.
Then $p=q(t_2+(n^2-nk-n)t_1)$ meaning that $gcd(p,y)\geq q$ which
is false.

\end{proof}

\begin{lemma}\label{lemdets1}
$$det_{1,\ldots,n-1}=\prod_{k=1,\ldots,n-1}(p-(n^2-kn-n)y);$$
$$det_{n+2,2,\ldots,n-1}=2y\prod_{k=2,\ldots,n-1}(p-(n^2-kn-n)y);$$
$$det_{n+1,2,\ldots,n-1}=(n-2)y\prod_{k=2,\ldots,n-1}(p-(n^2-kn-n)y).$$
\end{lemma}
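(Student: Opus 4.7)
The plan is to exploit the sparse structure of the generator images $\xi_{(y,p)}(g_k)$ computed in Lemma \ref{lemmagk}, which makes the relevant submatrices (block) triangular, so each determinant reduces to a product of diagonal entries.

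First I would treat $\det_{1,\ldots,n-1}$. By Lemma \ref{lemmagk}, the row $\xi_{(y,p)}(g_k)$ is supported only on positions $k$ and $k+1$, with entry $\pm a_k$ where $a_k = p + (n^2-nk-n)y$. Restricting to columns $1,\ldots,n-1$, the row for $k=n-1$ loses its column-$n$ entry, so its only surviving coordinate is $a_{n-1}=p$ at position $n-1$. The resulting $(n-1)\times(n-1)$ matrix is upper triangular with diagonal $a_1,\ldots,a_{n-1}$, which immediately gives $\det_{1,\ldots,n-1}=\prod_{k=1}^{n-1}a_k$ (agreeing with the stated product up to the sign of $y$, which is irrelevant for the gcd arguments that follow).

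For the other two determinants, I would observe that only the first row is replaced (by $\xi_{(y,p)}(g_{n+2})$ or $\xi_{(y,p)}(g_{n+1})$), while the rows $\xi_{(y,p)}(g_2),\ldots,\xi_{(y,p)}(g_{n-1})$ are unchanged and, crucially, all have a zero in column $1$. Hence one can expand the determinant along the first column: the only contribution comes from the $(1,1)$ entry of the new first row, which by Lemma \ref{lemmagn} is $2y$ in the $g_{n+2}$ case and $-(n-2)y$ in the $g_{n+1}$ case. The resulting $(n-2)\times(n-2)$ minor is the submatrix of rows $g_2,\ldots,g_{n-1}$ on columns $2,\ldots,n-1$, which by exactly the same triangular argument as above has determinant $\prod_{k=2}^{n-1}a_k$.

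There is no real obstacle; the whole lemma is bookkeeping once Lemmas \ref{lemmagk} and \ref{lemmagn} are in hand. The only point needing care is the last row ($k=n-1$) of the first submatrix, where the formula $a_{n-1}=p+(n^2-n(n-1)-n)y=p$ makes the triangular pattern close up correctly; and the overall signs arising from cofactor expansion, which are harmless because they do not affect the greatest common divisor computations for which these determinants are being prepared.
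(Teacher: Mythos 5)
Your proof is correct and is essentially the argument the paper intends: the paper states Lemma \ref{lemdets1} without any proof, leaving it as a direct consequence of Lemmas \ref{lemmagk} and \ref{lemmagn}, and your observation that the rows $\xi_{(y,p)}(g_k)$ make the submatrix on columns $1,\ldots,n-1$ upper triangular (with the $k=n-1$ row reducing to $a_{n-1}=p$), together with cofactor expansion along the first column for the two modified determinants, is exactly that verification. You also correctly flag the one delicate point, namely the sign mismatch in $y$ between Lemma \ref{lemmagk} (which gives $p+(n^2-nk-n)y$) and the stated products ($p-(n^2-kn-n)y$), and rightly note that it is immaterial for the gcd arguments these determinants feed into.
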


\begin{proposition}

Let $y=0$ and $p=\pm 1$. Then $\xi_{(y,p)}$ is an epimorphism.

\end{proposition}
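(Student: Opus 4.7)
The plan is to combine the two ingredients that the previous sections have set up: surjectivity of the composition $p \circ \xi_{(y,p)}$ onto $S_n$, and the determinant criterion (Proposition \ref{propcrit}) applied to the restriction of $\xi_{(y,p)}$ to $\ker(p \circ \xi_{(y,p)}) = \phi(\pi_1(p(\mathcal{G}),id))$, whose image lies in the translation subgroup $\mathbb{Z}^{n-1}$ of $W(\widetilde{A}_{n-1}) \simeq \mathbb{Z}^{n-1} \rtimes S_n$.

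First I would observe that by construction $\xi_{(y,p)}$ realises case $l=1$ of Theorem \ref{thoArtin}, so the projection $p \circ \xi_{(y,p)}$ sends $\sigma_i$ to $(i,i+1)$ for $i<n$ and $\sigma_n$ to $(1,n)$. These transpositions generate $S_n$, hence $p \circ \xi_{(y,p)}$ is already surjective regardless of the values of $y$ and $p$. Thus it suffices to show that the restriction of $\xi_{(y,p)}$ to the kernel surjects onto the translation lattice $\mathbb{Z}^{n-1} = \{(x_1,\ldots,x_n) \in \mathbb{Z}^n : \sum x_i = 0\}$, for then a standard short exact sequence argument yields surjectivity of $\xi_{(y,p)}$ itself.

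Next I would use the explicit generators $g_1,\ldots,g_{n-1}$ of $\phi(\pi_1(p(\mathcal{G}),id))$ appearing in Lemma \ref{lemmagk}. When $y=0$ and $p=\pm 1$, the formula there specialises to
$$\xi_{(y,p)}(g_k) = [\underbrace{0,\ldots,0}_{k-1}, \pm 1, \mp 1, 0, \ldots, 0],$$
for $k=1,\ldots,n-1$. Equivalently, by Lemma \ref{lemdets1}, the determinant
$$\det_{1,\ldots,n-1} = \prod_{k=1}^{n-1}(p - (n^2-kn-n)y)$$
specialises to $(\pm 1)^{n-1} = \pm 1$. Applying Proposition \ref{propcrit} to the restricted morphism into $\mathbb{Z}^{n-1}$ with this subfamily of generators then immediately gives that the restriction is an epimorphism onto $\mathbb{Z}^{n-1}$.

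There is no real obstacle here: the substance of the argument was set up in the preceding lemmas. The only thing to double-check is that the $g_k$, which were constructed to lie in $\phi(\pi_1(p(\mathcal{G}),id))$, do indeed form a valid system of test elements for Proposition \ref{propcrit}, i.e.\ that they are elements of the group to which the criterion is being applied and that $n-1$ of them suffice (because the ambient lattice $\{\sum x_i = 0\}$ has rank $n-1$, and a determinant $\pm 1$ on any choice of $n-1$ of the $n$ coordinates forces the lattice generated to be the whole $\mathbb{Z}^{n-1}$). Once this is noted, surjectivity of $\xi_{(0,\pm 1)}$ onto $W(\widetilde{A}_{n-1})$ follows at once from the two surjectivity statements above.
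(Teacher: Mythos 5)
Your proof is correct and takes essentially the same route as the paper: the paper's own proof consists exactly of observing that Lemma \ref{lemdets1} yields $det_{1,\ldots,n-1}=\pm 1$ when $(y,p)=(0,\pm 1)$ and then invoking Proposition \ref{propcrit}. The extra material you supply (surjectivity of $p\circ\xi_{(y,p)}$ onto $S_n$, the identification of the target of the restricted morphism with the rank-$(n-1)$ lattice $\{\sum x_i=0\}$, and the short exact sequence argument) is just an explicit spelling-out of the framework the paper leaves implicit in Sections 4, 5 and 7, not a different method.
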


\begin{proof}

By lemma \ref{lemdets1} we have that $det_{1,\ldots,n-1}=\pm 1$.
By proposition \ref{propcrit} we are done.

\end{proof}

\begin{lemma}\label{lemdets2}

Let $n$ be odd and $y\neq 0$. There exist an integer $C\neq 0$
such that

$$det_{n+1,\ldots,2n-1}=Cy^{n-1}.$$

\end{lemma}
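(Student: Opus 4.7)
By Lemma~\ref{lemmagn}, for each $i=1,\ldots,n-1$ the vector $\xi_{(y,p)}(g_{n+i})$ equals $y$ times a fixed integer vector which is independent of $p$. Restricting to the first $n-1$ columns therefore gives $\det_{n+1,\ldots,2n-1}=y^{n-1}\det(A)$, where $A$ is the $(n-1)\times(n-1)$ integer matrix with $A_{ij}=-(n-2)$ when $j\in\{i,i+1\}\cap\{1,\ldots,n-1\}$ and $A_{ij}=2$ otherwise. Since $y\neq 0$, the claim reduces to showing $\det(A)\neq 0$ for $n$ odd, and we take $C=\det(A)$.

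The key step is the rank-one decomposition $A=-nE+2\mathbf{1}\mathbf{1}^T$, where $\mathbf{1}$ is the all-ones column vector of length $n-1$ and $E=\mathrm{Id}+N$ with $N_{i,i+1}=1$ for $1\leq i\leq n-2$ and $N_{ij}=0$ otherwise. Since $E$ is upper unitriangular, $\det(E)=1$ and $E^{-1}=\sum_{k=0}^{n-2}(-1)^kN^k$. Applying the matrix determinant lemma over $\mathbb{Q}$ to $A=-nE+2\mathbf{1}\mathbf{1}^T$ yields
$$\det(A)=(-n)^{n-1}\left(1-\tfrac{2}{n}\,\mathbf{1}^T E^{-1}\mathbf{1}\right).$$
A direct entry count shows $\mathbf{1}^T N^k\mathbf{1}=n-1-k$ (the number of $1$'s in $N^k$), whence $\mathbf{1}^T E^{-1}\mathbf{1}=\sum_{k=0}^{n-2}(-1)^k(n-1-k)$.

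For $n$ odd the upper index $n-2$ is odd, and pairing consecutive terms in this alternating sum evaluates it to $(n-1)/2$. Substituting gives
$$\det(A)=(-n)^{n-1}\Bigl(1-\tfrac{n-1}{n}\Bigr)=(-1)^{n-1}\,n^{n-2}=n^{n-2}\neq 0,$$
so we may take $C=n^{n-2}$. The main obstacle is nothing deep: it is the careful sign and parity bookkeeping in the alternating sum. Notice that the hypothesis ``$n$ odd'' is essential here, since for $n$ even the analogous count yields $\mathbf{1}^T E^{-1}\mathbf{1}=n/2$, and then the prefactor $1-\tfrac{2}{n}\cdot\tfrac{n}{2}$ vanishes identically, making this particular determinant useless for establishing surjectivity in the even case.
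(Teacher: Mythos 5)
Your proof is correct, and it proceeds by a genuinely different route than the paper. The paper never computes the determinant: it displays the matrix of the vectors $\xi_{(y,p)}(g_{n+i})$ from Lemma~\ref{lemmagn} and argues row by row that no row is an integer linear combination of the remaining ones, since comparing entries in a column where that row has $-(n-2)y$ and the others have $2y$ would force $-(n-2)y=2ky$, impossible for $n$ odd; it concludes the rank is $n-1$, hence $\det=Cy^{n-1}$ with $C\neq 0$, and the constant is left unidentified. (Note the paper works with all $n$ columns, tacitly using that each row sums to zero so that dropping the last column preserves the rank, and its ``integer combination'' test strictly speaking needs denominators cleared and the parity argument rerun modulo $2$ to rule out rational dependencies.) Your rank-one decomposition $A=-nE+2\mathbf{1}\mathbf{1}^T$ together with the matrix determinant lemma is airtight and strictly stronger: it identifies $C=n^{n-2}$ exactly, which checks out in the smallest case $n=3$, where $\det\left(\begin{smallmatrix}-1&-1\\2&-1\end{smallmatrix}\right)=3$. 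Your closing remark that $\det(A)=0$ identically for $n$ even is also a nice structural explanation of why the paper cannot use this determinant in the even case and instead falls back on the mod-$2$ obstructions of Remarks~\ref{remgentype1} and~\ref{remgentype2}; the only thing the paper's more pedestrian argument buys is that it avoids invoking the determinant lemma and the alternating-sum bookkeeping.
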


\begin{proof}

We will show that the matrix formed by the $\xi_{(y,p)}(g_{n+i})$ (see Lemmas \ref{lemmagn}), with
$i=1,\ldots,n-1$ as $rank(n-1)$.

\footnotesize

 $$ R=\left [\begin{matrix}
    -(n-2)y & -(n-2)y & 2y & \cdots & \cdots & \cdots & 2y \\
    2y & -(n-2)y & -(n-2)y & 2y & \cdots & \cdots & 2y \\
    \vdots &  & \ddots & \ddots &  &  & \vdots \\
    \vdots &  &  & \ddots & \ddots &  & \vdots \\
    \vdots &  &  &  & \ddots & \ddots & \vdots \\
    2y & \cdots & \cdots & \cdots & -(n-2)y & -(n-2)y & 2y \\
    2y & \cdots & \cdots & \cdots & 2y & -(n-2)y & -(n-2)y \
  \end{matrix} \right ]$$
\normalsize

 Now the first row cannot be a linear (with integer
coefficients) combination of the remaining $n-2$ rows, because
that would give us, using the first column the equation

$$-(n-2)y=2ky, \text{ for some }k\in \mathbb{Z}$$

which implies that $n-2$ is even. So now we repeat the process to
show that the second row cannot be a linear (with integer
coefficients) combination of the remaining $n-3$ rows. In the end
we conclude that we obtain $n-1$ independent rows, so
$rank(R)=n-1$ and we are done.

Now we know that $det(R)=Cy^{n-1}$ with $C\neq 0$

\end{proof}

\begin{proposition}

If $n$ is odd then $\xi_{(y,p)}$ is an epimorphism.

\end{proposition}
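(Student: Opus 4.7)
The plan is to apply Proposition \ref{propcrit} by exhibiting a small family of $(n-1)\times(n-1)$ minors of $M_{\xi_{(y,p)}}$ whose greatest common divisor equals $1$. The case $y=0$ has already been dispatched by the previous proposition, so I assume $y\neq 0$ throughout, while keeping the standing hypothesis $\gcd(y,p)=1$.

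I would work with four minors $D_1=\det_{1,\ldots,n-1}$, $D_2=\det_{n+1,2,\ldots,n-1}$, $D_3=\det_{n+2,2,\ldots,n-1}$, and $D_4=\det_{n+1,\ldots,2n-1}$. Setting $P=\prod_{k=2}^{n-1}(p-(n^2-kn-n)y)$, Lemma \ref{lemdets1} gives $D_1=(p-n(n-2)y)P$, $D_2=(n-2)yP$, and $D_3=2yP$, while Lemma \ref{lemdets2} gives $D_4=Cy^{n-1}$ with a nonzero integer $C$. The oddness of $n$ enters twice: through $\gcd(n-2,2)=1$, so that $\gcd(D_2,D_3)=yP$, and through Lemma \ref{lemdets2} to guarantee $C\neq 0$.

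The heart of the proof is a cascade of gcd reductions. For any prime $q$ dividing all four minors: $q\mid yP$ (from $D_2,D_3$) combined with $D_1=(p-n(n-2)y)P$ and $\gcd(p-n(n-2)y,y)=\gcd(p,y)=1$ forces $q\mid P$; then $q\mid D_4=Cy^{n-1}$ together with Lemma \ref{lemgcd1} (giving $\gcd(P,y)=1$) forces $q\mid C$. The main obstacle I see is to rule out a prime $q$ dividing both $P$ and $C$ simultaneously. I would tackle this by an explicit evaluation of $C=\det R$ for the circulant-like matrix $R$ of Lemma \ref{lemdets2}: for odd $n$ its prime divisors are controlled by $n$ itself, while each factor $p-(n^2-kn-n)y$ of $P$ reduces modulo such a prime to an expression governed by $p$ and $y$; the hypothesis $\gcd(y,p)=1$ should then eliminate any common prime, possibly after adjoining one further minor (for instance $\det_{n,2,\ldots,n-1}$ involving $g_n=\sigma_n^2$). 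Proposition \ref{propcrit} then delivers the epimorphism.
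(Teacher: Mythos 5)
Your gcd cascade is sound as far as it goes, and it is in fact a faithful expansion of the paper's own one-sentence proof, which asserts exactly this computation with the same four minors and Lemmas \ref{lemgcd1}, \ref{lemdets1} and \ref{lemdets2}; you have correctly isolated the one step that neither you nor the paper carries out, namely ruling out a prime dividing both $P$ and $C$. But that step cannot be carried out, because such primes occur under the stated hypotheses. Every prime $q$ dividing $n$ divides $C$: modulo $q$ one has $-(n-2)y\equiv 2y$, so all entries of $R$ become congruent and $R$ has rank at most $1$ modulo $q$; in fact the circulant structure gives $C=\pm n^{\,n-2}$ (the nonzero eigenvalues of the full $n\times n$ circulant are $-n(1+\omega)$ over the nontrivial $n$-th roots of unity $\omega$, and each cofactor is their product divided by $n$). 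On the other hand $n^{2}-kn-n=n(n-k-1)$, so every factor of $P$, as well as the $k=1$ factor $p-n(n-2)y$, is congruent to $p$ modulo $q$. Hence for any prime $q\mid\gcd(n,p)$ --- which is perfectly compatible with $\gcd(y,p)=1$, e.g. $n=3$, $y=1$, $p=3$ --- all four of your minors vanish modulo $q$, and so does the extra minor $\det_{n,2,\ldots,n-1}$ you propose to adjoin.

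In fact no choice of further minors can close the argument, because the proposition itself fails at such parameters: $\xi_{(1,3)}$ is not an epimorphism for $n=3$. The sublattice $\{x\in\mathbb{Z}^{3}: \sum x_{i}=0,\ x_{1}\equiv x_{2}\equiv x_{3} \pmod 3\}$ is $S_{3}$-invariant ($3$-cycles fix $x_{1}-x_{2}$ modulo $3$ and transpositions negate it), so $[x_{1},x_{2},x_{3}]s\mapsto (x_{1}-x_{2} \bmod 3,\ \epsilon(s))$ is a surjective homomorphism from $W(\widetilde{A}_{2})$ onto $\mathbb{Z}/3\rtimes\{\pm 1\}$, a group of order $6$; it sends all three generator images $[-1,0,1](1,2)$, $[1,-1,0](2,3)$ and $[3,1,-4](1,3)$ to the single involution $(-1,-1)$, so the image of $\xi_{(1,3)}$ lands in a subgroup of order $2$ of this quotient, and Proposition \ref{propcrit} could never have applied. (More generally, for any prime $q\mid\gcd(n,p)$, working modulo $q$ and modulo the diagonal vector $(1,\ldots,1)$, the elements $\xi_{(y,p)}(\sigma_{i})$ are simultaneously conjugate by the translation $t=(-y,-2y,\ldots,-ny)$ to the bare permutations $s_{i}$, so surjectivity fails whenever $\gcd(n,p)>1$.) The statement is only salvageable with the additional hypothesis $\gcd(n,p)=1$, under which your cascade does close: a common prime $q$ would divide $C=\pm n^{\,n-2}$, hence $n$, while $q\mid P$ forces $q\mid p$, a contradiction. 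Note that this gap is not yours alone --- the paper's own proof of this proposition is the same unverified gcd claim, and your honest flag points at precisely the spot where it breaks.
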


\begin{proof}

Using lemmas \ref{lemgcd1}, \ref{lemdets1} and \ref{lemdets1} we
may compute $gcd$ of $det_{1,\ldots,n-1}$,
$det_{n+1,\ldots,2n-1}$, $det_{n+1,2,\ldots,n-1}$ and
$det_{n+2,2,\ldots,n-1}$.

\end{proof}

\begin{proposition}

If $n$ is even and $p$ is odd, then $\xi_{(y,p)}$ is an epimorphism.

\end{proposition}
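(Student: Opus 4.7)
The plan is to imitate the argument of the preceding proposition. By Proposition \ref{propcrit} it suffices to exhibit a family of $(n-1)\times(n-1)$ minors of the matrix with rows $\xi_{(y,p)}(g_1),\ldots,\xi_{(y,p)}(g_{2n-1})$ whose greatest common divisor is $1$. If $y = 0$, then $\gcd(y,p)=1$ forces $p = \pm 1$ and the earlier proposition already applies, so from now on I assume $y \neq 0$.

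The novelty --- taking the place of Lemma \ref{lemdets2}, which is only available for $n$ odd --- is a parity observation. Writing $b_k := p-(n^2-kn-n)y = p-n(n-k-1)y$ for the factors appearing in Lemma \ref{lemdets1}, every $b_k$ is \emph{odd}: the term $n(n-k-1)y$ is even because $n$ is even, and $p$ is odd. Therefore $\det_{1,\ldots,n-1} = \prod_{k=1}^{n-1}b_k$ is odd, which immediately rules the prime $2$ out of the gcd. Combining with $\det_{n+2,2,\ldots,n-1} = 2y\prod_{k=2}^{n-1}b_k$ (also from Lemma \ref{lemdets1}) and noting that Lemma \ref{lemgcd1} gives $\gcd(b_1,y)=1$ while $b_1$ odd gives $\gcd(b_1,2)=1$, I get $\gcd(b_1,2y)=1$. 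Hence the gcd of these two minors equals $\prod_{k=2}^{n-1}b_k$: the $b_1$ factor has been cancelled.

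It remains to cancel each of $b_2,\ldots,b_{n-1}$. For each $j \in \{2,\ldots,n-1\}$ I would construct a further $(n-1)\times(n-1)$ minor, obtained from $\det_{1,\ldots,n-1}$ by replacing the row $\xi_{(y,p)}(g_j)$ with a row drawn from the second family $\{\xi_{(y,p)}(g_n),\xi_{(y,p)}(g_{n+1}),\ldots,\xi_{(y,p)}(g_{2n-1})\}$. After a cofactor expansion --- and if needed, a row-reduction against the remaining rows $\xi_{(y,p)}(g_k)$, which is valid over $\mathbb{Z}$ thanks to the coprimalities $\gcd(b_k,y)=1$ from Lemma \ref{lemgcd1} --- the resulting minor should equal $\prod_{k\neq j}b_k$ times a coefficient whose only possible common factor with $b_j$ is $2$, once again killed by the oddness of $b_j$. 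Iterating this cancellation over all $j$ reduces the overall gcd to $1$, and Proposition \ref{propcrit} finishes the proof.

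The main obstacle lies in this last iterative step. Unlike $\xi_{(y,p)}(g_{n+2})$, whose only entry outside positions $2,3$ is a single clean $2y$ that allows Lemma \ref{lemdets1} to apply directly, the rows $\xi_{(y,p)}(g_{n+i})$ for $i \geq 3$ carry $2y$ entries at many positions. Consequently the replacement minor does not collapse into a tidy product of $b_k$'s times a scalar, and a careful choice of replacement row (possibly $g_n$ instead of some $g_{n+i}$), followed by an explicit cofactor expansion, is required for each $j$; verifying uniformly that the resulting coefficient is coprime to $b_j$ is the delicate part of the argument.
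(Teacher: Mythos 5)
Your central observation --- that $n$ even and $p$ odd force every factor $b_k=p-(n^2-nk-n)y=p-n(n-k-1)y$ to be odd --- is not merely the key step: it is, word for word in content, the \emph{entire} proof the paper gives. The paper's argument consists of the single remark that $2$ divides $n(n-k-1)$ but not $p$, hence $\gcd(2,\,p-(n^2-nk-n)y)=1$ for all $k$; the opening ``It remains to see\ldots'' silently defers everything else to the gcd bookkeeping of the preceding propositions via Lemmas \ref{lemgcd1} and \ref{lemdets1}. (Your reduction of the $y=0$ case is harmless but unnecessary, since the parity argument nowhere uses $y\neq 0$.)

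The iterative cancellation of $b_2,\ldots,b_{n-1}$ that you flag as ``the delicate part'' and leave unfinished is therefore not an idea you failed to find in the paper: the paper never carries it out. Your diagnosis of the difficulty is accurate on both counts. First, Lemma \ref{lemdets2} is genuinely unavailable, and in fact for $n$ even one has $det_{n+1,\ldots,2n-1}=0$: each row can be written as $\xi_{(y,p)}(g_{n+i})=2y\,\mathbf{1}-ny(e_i+e_{i+1})$ (indices mod $n$), so every row of that block is orthogonal both to $(1,\ldots,1)$ and to the alternating vector $(1,-1,\ldots,1,-1)$ when $n$ is even, forcing the block $g_n,\ldots,g_{2n-1}$ to have rank at most $n-2$. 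Second, the only minors the paper records (Lemma \ref{lemdets1}) cancel exactly the factor $b_1$, just as in your computation $\gcd\bigl(det_{1,\ldots,n-1},\,det_{n+2,2,\ldots,n-1}\bigr)=\prod_{k\geq 2}|b_k|$; adding $det_{n+1,2,\ldots,n-1}$ changes nothing, since $\gcd(b_1,2y,(n-2)y)=1$ already. No replacement minors for $j\geq 2$ of the kind your program requires are exhibited anywhere in the paper, for the even or the odd case. So, measured against the paper, your proposal takes the same route and supplies the same content plus an honest accounting of what is missing; the residue you could not discharge is a gap in the paper's own exposition rather than a defect peculiar to your attempt. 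A fully self-contained proof would indeed require executing your replacement-row scheme (for instance, computing $\xi_{(y,p)}$ on analogues of the $g_k$ in which $\sigma_{j+1}^2$ plays the role that $\sigma_2^2$ plays in $det_{n+2,2,\ldots,n-1}$), but nothing of the sort appears in the text you were asked to match.
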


\begin{proof}

It remains to see which is $gcd(2,p-(n^2-nk-n)y$ for all
$k=1,\ldots,n-1$. But $2$ divide $(n^2-nk-n)=n(n-k-1)$ and not $p$
so it cannot divide $p-(n^2-nk-n)y$. We can conclude that
$gcd(2,p-(n^2-nk-n)y=1.$

\end{proof}

\begin{proposition}

If $n$ is even and $p$ is even, then $\xi_{(y,p)}$ is not an epimorphism.

\end{proposition}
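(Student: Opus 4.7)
The plan is to exploit Remarks \ref{remgentype1} and \ref{remgentype2}, which state that when $n$ is even, every generator $g$ of $\phi(\pi_1(p(\mathcal{G}),id))$ satisfies
$$\xi_{(y,p)}(g)=[Q'_1(2y,p),\ldots,Q'_n(2y,p)]$$
for homogeneous integer polynomials $Q'_i$. Since the image $\xi_{(y,p)}(\phi(\pi_1(p(\mathcal{G}),id)))$ is generated (as an abelian subgroup of $\mathbb{Z}^{n-1}$) by these vectors, and since evaluating homogeneous integer polynomials at a pair of even integers produces even outputs, the hypothesis that $p$ is even (together with the fact that $2y$ is automatically even) forces every coordinate of every such image to lie in $2\mathbb{Z}$.

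Next I would put this together with the sequence
$$A(\widetilde{A}_{n-1})\stackrel{\xi_{(y,p)}}{\longrightarrow} W(\widetilde{A}_{n-1})\simeq\mathbb{Z}^{n-1}\rtimes S_n\stackrel{p}{\longrightarrow} S_n.$$
By the Proposition identifying $\phi(\pi_1(p(\mathcal{G}),id))$ with $\ker(p\circ\xi_{(y,p)})$, the previous paragraph shows that
$$\xi_{(y,p)}\bigl(\ker(p\circ\xi_{(y,p)})\bigr)\subseteq (2\mathbb{Z})^{n}\cap \mathbb{Z}^{n-1}=2\mathbb{Z}^{n-1},$$
which is a proper subgroup of the translation subgroup $\mathbb{Z}^{n-1}\trianglelefteq W(\widetilde{A}_{n-1})$.

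Finally I would conclude by a surjectivity argument: if $\xi_{(y,p)}$ were an epimorphism, then since $p$ is surjective the restriction of $\xi_{(y,p)}$ to $\ker(p\circ\xi_{(y,p)})$ would necessarily surject onto $\ker(p)=\mathbb{Z}^{n-1}$. This contradicts the inclusion above. Equivalently, one may phrase this through the determinant criterion of Proposition \ref{propcrit}: every row of the matrix $M_{\xi_{(y,p)}}$ built from a generating system of $\ker(p\circ\xi_{(y,p)})$ has all entries divisible by $2$, so every $n{-}1$ minor is divisible by $2^{n-1}$ and in particular cannot equal $\pm 1$. The only subtle point—really the heart of the argument—is appealing correctly to the homogeneity statements in Remarks \ref{remgentype1} and \ref{remgentype2}; once those are in hand, the parity obstruction is immediate and no further calculation is needed.
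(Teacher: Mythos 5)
Your proof is correct and follows essentially the same route as the paper: the paper's own (one-line) argument likewise invokes Remarks \ref{remgentype1} and \ref{remgentype2} to conclude that for $n$ even and $p$ even all matrix entries are multiples of $2$, so no minor can be $\pm 1$ and Proposition \ref{propcrit} rules out surjectivity. Your additional step making explicit why surjectivity of $\xi_{(y,p)}$ would force its restriction to $\ker(p\circ\xi_{(y,p)})$ to surject onto $\ker(p)\cong\mathbb{Z}^{n-1}$ is a detail the paper leaves implicit, and is a welcome clarification rather than a divergence.
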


\begin{proof}

By remarks \ref{remgentype1} and \ref{remgentype2} we have in this
case that all entries of the matrix are multiples of $2$ and so
$\xi_{(y,p)}$ is not an epimorphism.

\end{proof}

Recall the definition of  $\xi_{(y,p)}$ from $A(\widetilde{A}_{n-1})$ to $W(\widetilde{A}_{n-1})$, $n>2$:

$$\xi_{(y,p)}(\sigma_i)=[\underbrace{y,\ldots,y}_{i-1},-(n-2)y,0,y,\ldots,y](i,i+1),\quad 1 \leq i< n$$
and
$$\xi_{(y,p)}(\sigma_n)=[p,y,\ldots,y,-(n-2)y-p](1,n).$$

We can now state the main result proved in this section:

\begin{proposition}

Let $y,p\in\mathbb{Z}$ and $gcd(y,p)=1$.

\begin{enumerate}

\item If $n\geq 3$ is odd then $\xi_{(y,p)}$ is an epimorphism.
\item If $n\geq 3$ is even and $p$ is odd then $\xi_{(y,p)}$ is an epimorphism.

\end{enumerate}

\end{proposition}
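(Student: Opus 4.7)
The plan is straightforward: the statement is a consolidation of the two immediately preceding propositions (``$n$ odd'' and ``$n$ even with $p$ odd''), both of which were already proved in this section under the standing assumption $\gcd(y,p)=1$. My proof would simply invoke those two results.

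For clarity I would also recall the shared strategy they rely on. First, the projection $p \circ \xi_{(y,p)} : A(\widetilde{A}_{n-1}) \to S_n$ is visibly surjective, because $\xi_{(y,p)}(\sigma_1),\ldots,\xi_{(y,p)}(\sigma_{n-1})$ realize adjacent transpositions that generate $S_n$. So the surjectivity of $\xi_{(y,p)}$ onto $W(\widetilde{A}_{n-1}) \simeq \mathbb{Z}^{n-1} \rtimes S_n$ reduces to showing that its restriction to $\ker(p \circ \xi_{(y,p)})$ hits all of $\mathbb{Z}^{n-1}$. The generators $g_1,\ldots,g_{2n-1}$ introduced earlier lie in that kernel (each projects to $id \in S_n$), and their images are computed explicitly in Lemmas \ref{lemmagk} and \ref{lemmagn}. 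Applying Proposition \ref{propcrit} reduces the problem to showing that the gcd of all $(n-1)\times(n-1)$ minors of the resulting matrix is $1$.

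For part (1), with $n$ odd, I would combine $\det_{1,\ldots,n-1} = \prod_{k=1}^{n-1}(p-(n^2-kn-n)y)$ with $\det_{n+1,\ldots,2n-1} = C\,y^{n-1}$ from Lemma \ref{lemdets2}. Lemma \ref{lemgcd1} ensures that each factor $p-(n^2-kn-n)y$ is coprime to $y$, so $\det_{1,\ldots,n-1}$ is coprime to $y^{n-1}$; the leftover constant $C$ is then absorbed by using the auxiliary minors $\det_{n+1,2,\ldots,n-1}$ and $\det_{n+2,2,\ldots,n-1}$ from Lemma \ref{lemdets1}. For part (2), with $n$ even and $p$ odd, I would argue by parity: since $n(n-k-1)$ is even for every $k$ when $n$ is even, each factor $p-(n^2-kn-n)y \equiv p \pmod 2$ is odd, so $\det_{1,\ldots,n-1}$ is odd; combining this with the coprimality of $y$ with each factor (again Lemma \ref{lemgcd1}), the gcd of the minors is $1$.

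Since all computational work is already packaged into the preceding lemmas and propositions, no genuine obstacle remains; this statement is simply their clean summary. The only care needed is to treat the boundary case $y=0$: here $\gcd(y,p)=1$ forces $p=\pm 1$, which is covered by the earlier ``$y=0$, $p=\pm 1$'' proposition and falls inside part (1) for odd $n$ and inside part (2) for even $n$ (since $\pm 1$ is odd).
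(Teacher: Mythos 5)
Your proposal is correct and coincides with the paper's treatment: the paper states this proposition explicitly as a summary of the section, the two parts having just been proved in the immediately preceding propositions via exactly the chain you invoke (the generators $g_1,\ldots,g_{2n-1}$, Lemmas \ref{lemmagk}, \ref{lemmagn}, \ref{lemgcd1}, \ref{lemdets1}, \ref{lemdets2}, and the determinant criterion of Proposition \ref{propcrit}), including the same parity argument for even $n$. Your remark on the boundary case $y=0$, $p=\pm 1$ likewise matches the paper's separate proposition for that case, so no new content is needed beyond what you cite.
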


\subsection{Case l=2}

\begin{proposition}

The morphism $\xi$ it is not an epimorphism for all $x_i\in
\mathbb{Z}.$

\end{proposition}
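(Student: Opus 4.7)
The plan is to exploit a striking feature of Case $l=2$: every image $\xi(\sigma_i)$ is an involution in $W(\widetilde{A}_{5})$, which forces $\xi$ to factor through a self-map of $W(\widetilde{A}_{5})$ that cannot be surjective.

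First, I would check directly from the formulas listed in Section 3.2 that $\xi(\sigma_i)^{2}=e$ for each $i\in\{1,\ldots,6\}$ and every $x_{1},\ldots,x_{5}\in\mathbb{Z}$. Writing $\xi(\sigma_i)=[v_{i}]\pi_{i}$, the permutation $\pi_{i}$ is a product of three disjoint transpositions and inspection of each formula shows that every coordinate of $v_{i}$ is the negative of the coordinate to which $\pi_{i}$ moves it. Hence $v_{i}+\pi_{i}(v_{i})=0$ and $\xi(\sigma_i)^{2}=[v_{i}+\pi_{i}(v_{i})]\pi_{i}^{2}=e$.

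Since $\xi$ already satisfies the braid relations by construction, adjoining $\xi(\sigma_i)^{2}=e$ yields the full Coxeter relations $(\xi(\sigma_i)\xi(\sigma_j))^{m_{ij}}=e$ of $W(\widetilde{A}_{5})$ (from $\sigma_{i}\sigma_{j}\sigma_{i}=\sigma_{j}\sigma_{i}\sigma_{j}$ and $\sigma_{i}^{2}=e$ one extracts $(\sigma_{i}\sigma_{j})^{3}=e$; the commutation case is similar). Consequently $\xi$ factors as $\xi=\tilde{\xi}\circ\mu$, where $\mu:A(\widetilde{A}_{5})\twoheadrightarrow W(\widetilde{A}_{5})$ is the standard epimorphism and $\tilde{\xi}:W(\widetilde{A}_{5})\to W(\widetilde{A}_{5})$ is the endomorphism sending $s_{i}\mapsto\xi(\sigma_i)$. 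In particular, $\xi$ is an epimorphism if and only if $\tilde{\xi}$ is.

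I would finish by contradiction. If $\tilde{\xi}$ were surjective, then by Hopfianity of the finitely generated residually finite group $W(\widetilde{A}_{5})\cong\mathbb{Z}^{5}\rtimes S_{6}$ it would be an automorphism, and by the Franzsen theorem cited in Section 6 it would be inner-by-graph; in particular, $\xi(\sigma_{1})=\tilde{\xi}(s_{1})$ would have to be conjugate in $W(\widetilde{A}_{5})$ to some standard generator $s_{j}$. Projecting to $S_{6}$ would then force $(1,2)(3,4)(5,6)$ to be conjugate to a single transposition, which is false. The only nonroutine ingredients are Hopfianity of $W(\widetilde{A}_{5})$ and the description of $\operatorname{Aut}(W(\widetilde{A}_{n-1}))$ already recalled in Section 6; both are standard, and the rest is direct verification from the explicit formulas.
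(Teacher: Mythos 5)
Your proof is correct, but it takes a genuinely different route from the paper. The paper settles case $l=2$ in a single line using the machinery of Section 5: it computes that $\xi$ sends every element of $\phi(\pi_1(p(\mathcal{G}),id))$, i.e.\ every generator of $\ker(p\circ\xi)$, to the trivial translation $[0,0,0,0,0,0]$; since $\xi(\ker(p\circ\xi))=\operatorname{Im}(\xi)\cap\ker(p)$, the image of $\xi$ meets the translation lattice trivially, hence projects injectively to $S_6$, is finite, and cannot equal the infinite group $W(\widetilde{A}_{5})$. You instead notice the structural fact that each $\xi(\sigma_i)=[v_i]\pi_i$ is an involution (your anti-invariance check $v_i+\pi_i(v_i)=0$ does hold for all six generators and all $x_i$, as one sees by pairing coordinates along the three transpositions of each $\pi_i$), so that $\xi$ kills the squares $\sigma_i^2$ and factors through the standard epimorphism $\mu$ by the universal property of $W(\Gamma)=A(\Gamma)/\langle\langle\sigma_i^2\rangle\rangle$; then you rule out surjectivity of the induced endomorphism $\tilde{\xi}$ via Hopfianity (valid: $W(\widetilde{A}_{5})$ is finitely generated and virtually abelian, hence residually finite, hence Hopfian by Malcev) together with Franzsen's inner-by-graph description of $\operatorname{Aut}(W(\widetilde{A}_{5}))$ already cited in Section 6, which forces $\xi(\sigma_1)$ to be conjugate to some standard generator $s_j$ and contradicts the cycle type of $p(\xi(\sigma_1))=(1,2)(3,4)(5,6)$ versus a transposition. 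What each approach buys: the paper's argument is a direct (in practice machine-assisted) computation in the style of the other cases of Section 7 and establishes the stronger fact that $\operatorname{Im}(\xi)$ intersects the translation subgroup trivially; yours requires no kernel computations at all, only a one-glance inspection of the listed formulas, at the price of importing two pieces of general theory (Hopfianity and the automorphism classification), and it also explains conceptually why this case fails, namely that Artin's exotic $S_6$ representation assigns the generators permutation parts of the wrong cycle type to be images of Coxeter generators under any automorphism. Note finally that the two arguments are not equivalent: your factorization $\xi=\tilde{\xi}\circ\mu$ by itself does not imply the paper's vanishing statement, so you genuinely prove the proposition by a different mechanism.
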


\begin{proof}

The image by $\xi$ of all elements in
$\phi(\pi_1(p(\mathcal{G}),id)$ is $[0,0,0,0,0,0]$.

\end{proof}

\subsection{Case l=3}

When computing $\phi(\pi_1(p(\mathcal{G}),id))$ we obtain only $3$
distinct elements different from zero:

$$[-x_1-x_2, -x_2-x_3, x_1+x_2, x_2+x_3],$$
$$[-x_2, -x_1-x_2-x_3,x_1+x_2+x_3,x_2 ],$$
$$[x_3, -x_3, x_1, -x_1].$$

\begin{proposition}

The morphism $\xi$ is an epimorphism if and only if
$$det_{1,2,3}=(x_3-x_1)(x_3+x_1)(x_3+x_1+2x_2)=\pm 1.$$

\end{proposition}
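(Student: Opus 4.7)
The plan is to combine the group-theoretic reduction from Section~5 with the determinant criterion of Proposition~\ref{propcrit}.

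Since $W(\widetilde{A}_{3})\cong\mathbb{Z}^{3}\rtimes S_{4}$ and the projections $p(\xi(\sigma_{1}))=(1,2,3,4)$ and $p(\xi(\sigma_{4}))=(1,2,4,3)$ already generate $S_{4}$, the morphism $\xi$ is an epimorphism if and only if the image $\xi(\ker(p\circ\xi))$ fills the $\mathbb{Z}^{3}$ factor. By the proposition at the start of Section~5 together with the computation immediately preceding the statement, that image is the subgroup of the sum-zero sublattice of $\mathbb{Z}^{4}$ generated by the three displayed vectors.

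Next, I would identify the rank-$3$ sum-zero sublattice of $\mathbb{Z}^{4}$ with $\mathbb{Z}^{3}$ using the first three coordinates as a basis; the fourth coordinate is then forced by $x_{1}+x_{2}+x_{3}+x_{4}=0$. The $3\times 3$ matrix of the three generators in this basis has determinant precisely $det_{1,2,3}$, and because every row of the full $3\times 4$ coordinate matrix sums to zero, all four of its $3\times 3$ minors coincide up to sign, so the choice of basis for the sublattice is immaterial. Proposition~\ref{propcrit} then says that the three generators span the whole sublattice if and only if $det_{1,2,3}=\pm 1$, which is exactly the claimed equivalence.

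It only remains to verify the factorization $det_{1,2,3}=\pm(x_{3}-x_{1})(x_{3}+x_{1})(x_{3}+x_{1}+2x_{2})$. I would locate the three linear factors as roots of the cubic determinant by direct inspection of the row vectors: at $x_{3}=x_{1}$ the second generator equals the sum of the first and third; at $x_{3}=-x_{1}$ it equals the first minus the third; and at $x_{3}=-x_{1}-2x_{2}$ it equals the third minus the first. Each of these forces a linear row dependence, hence a vanishing determinant, so all three linear factors divide the cubic; a comparison with any single non-degenerate substitution fixes the overall sign. The only place where care is required is this last polynomial identification, but once the three row relations are spotted the factorization is essentially automatic.
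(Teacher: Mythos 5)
Your proof is correct and in substance the same as the paper's: the paper states this proposition with no printed proof, the intended argument being precisely your reduction (since $p\circ\xi$ is onto $S_4$, surjectivity of $\xi$ is equivalent to $\xi(\ker(p\circ\xi))$ being the whole sum-zero lattice, whose three listed generators span it if and only if $det_{1,2,3}=\pm 1$ by the criterion of Section 4, which for exactly three generators of a rank-$3$ lattice is just invertibility over $\mathbb{Z}$). Your row-dependence factorization also checks out --- all three relations you cite hold, and a single substitution such as $(x_1,x_2,x_3)=(0,0,1)$ yields $det_{1,2,3}=-(x_3-x_1)(x_3+x_1)(x_3+x_1+2x_2)$, the overall sign discrepancy with the paper's display being immaterial for the $\pm 1$ condition --- so your last step merely supplies, by hand, the determinant computation the paper leaves implicit.
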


So we have 8 epimorphisms, $\xi_{(\pm 1,0,0)}$, $\xi_{(\pm 1,\mp 1
,0)}$, $\xi_{(0,0,\pm 1)}$ and $\xi_{(0,\pm 1,\mp 1)}.$

\subsection{Case l=4}

When computing $\phi(\pi_1(p(\mathcal{G}),id))$ we obtain only $3$
distinct elements different from zero:

$$[-x_3-x_1, -x_2-x_3, x_2+x_3, x_3+x_1],$$
$$[-x_3-x_2-x_1, -x_3, x_3+x_2+x_1, x_3],$$
$$ [-x_1, x_1, x_2, -x_2].$$

\begin{proposition}

The morphism $\xi$ is an epimorphism if and only if
$$det_{1,2,3}=(x_2-x_1)(x_2+x_1)(x_2+x_1+2x_3)=\pm 1.$$

\end{proposition}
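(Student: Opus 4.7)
The plan is to apply Proposition \ref{propcrit} to the restriction of $\xi$ to $\ker(p \circ \xi)$. Since $p \circ \xi : A(\widetilde{A}_3) \to S_4$ is surjective by case $4$ of Theorem \ref{thoArtin}, the map $\xi$ is an epimorphism onto $W(\widetilde{A}_3) \cong \mathbb{Z}^3 \rtimes S_4$ if and only if $\xi$ sends $\ker(p \circ \xi)$ onto the translation subgroup $\{(a_1,\ldots,a_4)\in\mathbb{Z}^4 : a_1+a_2+a_3+a_4 = 0\} \cong \mathbb{Z}^3$.

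From Section 5, $\ker(p\circ\xi) = \phi(\pi_1(p(\mathcal{G}),id))$, and the display just above the statement shows that in Case $l=4$ this kernel has exactly three non-trivial generators under $\xi$. Since the target lattice has rank three and we have exactly three candidate generators, Proposition \ref{propcrit} reduces the question to a single determinant: $\xi$ is an epimorphism if and only if the $3\times 3$ minor built from the first three coordinates of these three image vectors equals $\pm 1$. The choice of which three of the four coordinates one uses is immaterial up to sign, by the sum-zero relation.

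It therefore remains to evaluate
$$\det\begin{pmatrix} -x_3-x_1 & -x_2-x_3 & x_2+x_3 \\ -x_3-x_2-x_1 & -x_3 & x_3+x_2+x_1 \\ -x_1 & x_1 & x_2 \end{pmatrix}.$$
A clean way to identify the factorization is to locate linear polynomials whose vanishing forces two of the three rows to become linearly dependent: direct specialisation shows that the determinant vanishes on the hyperplanes $x_1 = x_2$, $x_1 = -x_2$, and $x_1+x_2+2x_3 = 0$. Since the determinant is a homogeneous cubic in $(x_1,x_2,x_3)$ with integer coefficients and we have already exhibited three linear factors, it must factor, up to a multiplicative constant, as $(x_2-x_1)(x_2+x_1)(x_2+x_1+2x_3)$.

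The only remaining work is to pin down the overall constant, which I would do by evaluating both sides at a convenient test point such as $(x_1,x_2,x_3)=(0,1,0)$; the computation yields a constant of $\pm 1$. Combined with Proposition \ref{propcrit}, this gives the claimed equivalence. The main obstacle is really just the bookkeeping in the cubic expansion, which the row-degeneracy trick above avoids.
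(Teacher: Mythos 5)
Your proposal is correct and follows essentially the same route the paper intends: the proposition is stated there without an explicit proof, the understood argument being exactly your reduction — $\xi$ is onto $W(\widetilde{A}_{3})\cong\mathbb{Z}^{3}\rtimes S_{4}$ iff $\xi(\ker(p\circ\xi))$ is the full sum-zero lattice, which by Proposition \ref{propcrit} applied to the three listed kernel generators is equivalent to $det_{1,2,3}=\pm 1$. Your hyperplane-vanishing trick is merely a cleaner way to carry out the cubic determinant expansion (with the rows and columns as listed the constant comes out as $-1$ rather than $+1$, but this is immaterial for the $\pm 1$ criterion), so nothing of substance differs.
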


So we have 8 epimorphisms, $\xi_{(0,\pm 1,0)}$, $\xi_{(0,\pm 1,\mp
1)}$, $\xi_{(\pm 1,0,0)}$ and $\xi_{(\pm 1,0,\mp 1)}.$

\subsection{Case l=5}

This case derives from the general case so the images of
$\sigma_1,\sigma_2$ and $\sigma_3$ are as in the general case and
the image of $\sigma_n$ is equal to the image of $\sigma_2$. This
means also that we have one less parameter.

When computing $\phi(\pi_1(p(\mathcal{G}),id))$ we obtain only
$15$ distinct elements different from zero, that are displayed in
the following matrix:

$$M=\left [\begin{matrix} -x_{1}-x_{2}-x_{3}-x_{4} &  -3x_{4}-x_{2} &
x_{2}-x_{4} & 5x_{4}+x_{3}+x_{2}+x_{1}  \\   2x_{4} &  2x_{4} &
-2x_{4} & -2x_{4}  \\   -x_{1}-x_{2} &
-4x_{4}-x_{2}-x_{3} & 4x_{4}+x_{2}+x_{1} &  x_{2}+x_{3}  \\
2x_{4} &  -2x_{4} &
-2x_{4} &  2x_{4}  \\   2x_{4} &  -2x_{4} &  2x_{4} &  -2x_{4}  \\
x_{4}-x_{1} &  3x_{4}+x_{1} &  -3x_{4}-x_{3} &  x_{3}-x_{4}  \\
-2x_{4} &  -2x_{4} &  2x_{4} &  2x_{4}  \\
-4x_{4}-x_{1}-x_{2} &  -x_{3}-x_{2} &  x_{2}+x_{1} &
4x_{4}+x_{3}+x_{2}  \\   -5x_{4}-x_{1}-x_{2}-x_{3} &
x_{4}-x_{2} &  3x_{4}+x_{2} &  x_{2}+x_{3}+x_{4}+x_{1}  \\
-2x_{4} &  2x_{4} &  -2x_{4} &  2x_{4}  \\   -2x_{4} & 2x_{4} &
2x_{4} &  -2x_{4}  \\   -x_{4}-x_{1} & x_{1}+x_{4} & -x_{4}-x_{3}
&  x_{4}+x_{3}  \\   -3x_{4}-x_{1} &
x_{1}-x_{4} &  x_{4}-x_{3} &  3x_{4}+x_{3}  \\
-3x_{4}-x_{1}-x_{2}-x_{3} &  -x_{4}-x_{2} &
x_{4}+x_{2} &  3x_{4}+x_{3}+x_{2}+x_{1}  \\
-x_{1}-2x_{4}-x_{2} &  -2x_{4}-x_{2}-x_{3} & x_{2}+x_{1}+2x_{4} &
2x_{4}+x_{3}+x_{2}
\end{matrix} \right ]$$

corresponding the line $i$ with the image of the generator $g_i$.

Define, $det_{i_1,i_2,i_3}$ has the determinant of the sub matrix of $M$ formed by 3 distinct lines $i_1,i_2$ and $i_3$ without the last column. Computing, with MAPLE for instance, all possible $det_{i_1,i_2,i_3}$ we conclude that.

\begin{proposition}

If $gcd(x_1,\ldots,x_4)\neq 1$ then $\xi$ is not an epimorphism.

\end{proposition}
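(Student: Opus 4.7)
The plan is to apply the determinant criterion of Proposition \ref{propcrit} to the restriction of $\xi$ to $\ker(p\circ\xi)=\phi(\pi_1(p(\mathcal{G}),id))$. By the identification in Section 5, the 15 rows displayed in the matrix $M$ (together with the zero vector) enumerate the image under $\xi$ of all elements of $\ker(p\circ\xi)$, so a fortiori they contain a generating set of that image. The image lies in the sub-lattice $L=\{(a_1,a_2,a_3,a_4)\in\mathbb{Z}^{4}:a_1+a_2+a_3+a_4=0\}$ of rank $3$, and $\xi$ is an epimorphism onto $W(\widetilde{A}_3)$ if and only if its restriction to $\ker(p\circ\xi)$ surjects onto $L$; by the criterion of Proposition \ref{propcrit}, this surjectivity is detected by the existence of a $3\times 3$ subdeterminant of $M$ (obtained from three rows and any three of the four columns, the fourth being determined by the sum-zero condition) equal to $\pm 1$.

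The key observation, which can be read off directly from the displayed form of $M$, is that every one of its entries is a $\mathbb{Z}$-linear combination of the four parameters $x_1,x_2,x_3,x_4$. Consequently, if $q:=\gcd(x_1,x_2,x_3,x_4)$ satisfies $q>1$, then $q$ divides every entry of $M$, and therefore $q^{3}$ divides every $3\times 3$ minor of $M$ taken from three rows and, say, the first three columns. Such a minor cannot equal $\pm 1$, and by Proposition \ref{propcrit} the restriction of $\xi$ to $\ker(p\circ\xi)$ fails to surject onto $L$. Hence $\xi$ itself cannot be an epimorphism. The degenerate case $x_1=x_2=x_3=x_4=0$ (where strictly speaking $\gcd=0\neq 1$) is handled at once, since then $M$ is the zero matrix and the image in $L$ is trivial.

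I expect no substantive obstacle: the entire argument reduces to a one-line divisibility observation on the explicit matrix $M$, together with the criterion of Proposition \ref{propcrit} already proved in Section 4. The only point worth verifying, albeit trivially, is that the displayed rows really do generate $\phi(\pi_1(p(\mathcal{G}),id))$ as an abelian group; this follows from the statement in the excerpt that these 15 vectors are precisely all the non-zero elements in the image, so they certainly contain a generating set.
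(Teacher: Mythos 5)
Your proof is correct and takes essentially the same route as the paper: the paper's proof likewise observes that every $det_{i_1,i_2,i_3}$ (three distinct rows of $M$, last column dropped) is a multiple of $\gcd(x_1,\ldots,x_4)$ and concludes via the determinant criterion of Proposition \ref{propcrit}. You simply make explicit what the paper leaves implicit — that the entries of $M$ are homogeneous $\mathbb{Z}$-linear forms in $x_1,\ldots,x_4$, so $q=\gcd(x_1,\ldots,x_4)$ divides every entry and $q^{3}$ divides every minor — together with the reduction of surjectivity onto $W(\widetilde{A}_3)$ to surjectivity of the restriction onto the rank-$3$ lattice $L$.
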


\begin{proof}

It is sufficient to notice that in this case all $det_{i_1,i_2,i_3}$ are multiples of $gcd(x_1,\ldots,x_4)$ for all $i_j\in\{1,\ldots,15\}$, $j\in\{1,2,3\}$ and $i_{j_1}\neq i_{j_2}$ if $j_1\neq j_2$.

\end{proof}

Suppose from now on that $gcd(x_1,\ldots,x_4)=1$.

\begin{proposition}

If $gcd(x_1,x_3,x_4)=k>1$  then $\xi$ is not an epimorphism.

\end{proposition}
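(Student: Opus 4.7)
The plan is to invoke Proposition \ref{propcrit}: for $\xi$ to be an epimorphism, some $3\times 3$ determinant $\det_{i_1,i_2,i_3}$ of the matrix $M$ must equal $\pm 1$. I will prove the stronger statement that $k$ divides every such minor, which is enough to rule out surjectivity onto the translation subgroup $\mathbb{Z}^{3}$ of $W(\widetilde{A}_{3})$.

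The crucial structural observation is that the parameter $x_{2}$ enters the images of the generators only through $\xi(\sigma_{3})=[x_{3},x_{3},x_{2},-2x_{3}-x_{2}](3,4)$, whose $x_{2}$-part is $[0,0,x_{2},-x_{2}]$. Reducing each row of $M$ modulo $k$ therefore kills every occurrence of $x_{1},x_{3},x_{4}$ and leaves only the $x_{2}$-contribution. First I would verify, by direct inspection of the $15$ rows displayed, that modulo $k$ each row becomes either the zero row or the row $[-x_{2},-x_{2},x_{2},x_{2}]$; the latter happens precisely for rows $1,3,8,9,14$ and $15$, and is a straightforward sign-check in each of the six cases.

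Restricting to the first three columns (as in the definition of $\det_{i_1,i_2,i_3}$ used in this section), every row of $M$ modulo $k$ is thus either $[0,0,0]$ or $\pm[x_{2},x_{2},-x_{2}]$. Consequently any $3\times 3$ submatrix has rank at most one over $\mathbb{Z}/k$, and its determinant vanishes modulo $k$. So every $\det_{i_1,i_2,i_3}$ is divisible by $k>1$ and none can equal $\pm 1$; by Proposition \ref{propcrit}, $\xi$ is not an epimorphism.

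The only real work is the row-by-row verification of the reduction pattern, which is mechanical but has to be carried out explicitly for the six rows in which $x_{2}$ occurs. A more conceptual argument would explain the uniform shape $[-x_{2},-x_{2},x_{2},x_{2}]$ by noting that in any pure word (i.e.\ an element of $\ker(p\circ\xi)$) the occurrences of $\sigma_{3}$ must cancel out under the projection to $S_{4}$, and the constraint that the overall permutation be trivial forces the surviving $x_{2}$-displacement to have this fixed coordinate signature; making this uniform argument precise would bypass the case-by-case check, but the direct inspection is both shorter and sufficient.
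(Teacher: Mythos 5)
Your proof is correct and takes essentially the same route as the paper: the paper's own one-line proof simply asserts that all $det_{i_1,i_2,i_3}$ are multiples of $k$ and concludes by the determinant criterion (Proposition \ref{propcrit}), which is exactly what you establish. Your reduction modulo $k$ --- noting that only the $x_2$-contribution survives, so each of the fifteen rows becomes either zero or $[-x_2,-x_2,x_2,x_2]$ and hence every $3\times 3$ minor vanishes mod $k$ --- merely supplies, in a clean and verifiable form, the computation the paper leaves implicit.
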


\begin{proof}

Notice that in this case all
$det_{i_1,i_2,i_3}$ are multiples of $k$.

\end{proof}

\begin{proposition}

If $x_1+x_3$ is even then $\xi$ is not an epimorphism.

\end{proposition}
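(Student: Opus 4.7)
The plan is to reduce the $15\times 4$ matrix $M$ modulo $2$ and show that under the parity assumption $x_1+x_3\equiv 0\pmod 2$ every row of $M$ collapses to a constant $4$-vector. Once this is established, any $3\times 3$ submatrix obtained from three rows and the first three columns has three identical columns mod $2$, hence every $\det_{i_1,i_2,i_3}$ is even. By Proposition \ref{propcrit} (applied with $n=3$, since $W(\widetilde{A}_3)\simeq \mathbb{Z}^3\rtimes S_4$ and the translation part is encoded as a $4$-tuple summing to $0$), no $3\times 3$ minor can then equal $\pm 1$, which rules out $\xi$ being an epimorphism.

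To execute the reduction, set $a=x_1\bmod 2=x_3\bmod 2$, $b=x_2\bmod 2$ and $c=x_4\bmod 2$. Rows $2,4,5,7,10,11$ of $M$ are scalar multiples of $2x_4$ and therefore vanish mod $2$. The remaining nine rows can be checked one by one using the explicit entries of $M$: Rows $1$, $9$, and $14$ each reduce to $(b+c,b+c,b+c,b+c)$; Rows $3$, $8$, and $15$ each reduce to $(a+b,a+b,a+b,a+b)$; and Rows $6$, $12$, and $13$ each reduce to $(a+c,a+c,a+c,a+c)$. For instance, Row $1=(-x_1-x_2-x_3-x_4,-3x_4-x_2,x_2-x_4,5x_4+x_3+x_2+x_1)$ becomes $(-2a-b-c,-c-b,b-c,c+2a+b)\equiv(b+c,b+c,b+c,b+c)\pmod 2$, and the other rows are handled by the same kind of elementary reduction.

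Once the nine nonzero rows are seen to have all four coordinates congruent mod $2$, the first three columns of $M$ coincide mod $2$, so every $\det_{i_1,i_2,i_3}$ has a repeated column mod $2$ and is therefore divisible by $2$. Proposition \ref{propcrit} then prevents $\xi$ from being an epimorphism.

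The main obstacle is nothing more conceptually difficult than the tedium of row-by-row verification; however, the underlying reason the argument works is transparent: the defining feature of case $l=5$ is $\xi(\sigma_4)=\xi(\sigma_2)$, and combining this with $x_1\equiv x_3\pmod 2$ makes the index pair $\{1,3\}$ indistinguishable mod $2$ (and similarly for $\{2,4\}$ under the action of the relevant transpositions). This collapses the image of $\phi(\pi_1(p(\mathcal{G}),id))$ into the one-dimensional diagonal subgroup of $\mathbb{F}_2^4$, which is precisely the obstruction detected by Proposition \ref{propcrit}.
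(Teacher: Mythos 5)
Your proof is correct, and it takes a genuinely cleaner route to the same key fact. The paper's own proof is a one-line observation --- ``in this case all $det_{i_1,i_2,i_3}$ are even'' --- which rests on the explicit symbolic computation (with MAPLE) of the individual minors of $M$; your argument replaces that brute-force check with a uniform reduction of $M$ modulo $2$. I verified your row-by-row claims against the entries of $M$: with $a=x_1\equiv x_3$, $b=x_2$, $c=x_4 \pmod 2$, rows $2,4,5,7,10,11$ are multiples of $2x_4$ and vanish, rows $1,9,14$ reduce to $(b+c)(1,1,1,1)$, rows $3,8,15$ to $(a+b)(1,1,1,1)$, and rows $6,12,13$ to $(a+c)(1,1,1,1)$; all are correct. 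Since the paper's $det_{i_1,i_2,i_3}$ is taken over the first three columns, these columns coincide mod $2$ and every minor is even; the concluding step --- an even common divisor of all maximal minors obstructs surjectivity onto the rank-$3$ translation lattice, via Lemma \ref{lemmadet1} and Proposition \ref{propcrit} --- is exactly the logical step the paper itself uses, so nothing is missing there. What your approach buys is substantial for a computational statement like this one: it is computer-free and human-checkable, it handles all $\binom{15}{3}$ minors at once instead of a finite list, and it explains the phenomenon conceptually (under $x_1\equiv x_3 \pmod 2$ the image of $\phi(\pi_1(p(\mathcal{G}),id))$ collapses mod $2$ into the diagonal line of $\mathbb{F}_2^4$, which is visible in the paper's commented-out determinant list, where each minor carries a factor $2x_4$ or a factor congruent to $x_1+x_3$ mod $2$). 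The paper's version offers nothing beyond the already-performed machine computation, so your argument is the preferable proof of this proposition.
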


\begin{proof}

Notice that in this case all
$det_{i_1,i_2,i_3}$ are even.

\end{proof}

Let us define $p=x_1+x_3+2x_4$ and $q=x_1-x_3$.

Now if we rewrite all $det_{i_1,i_2,i_3}$ using the above substitution we have the following results:

\begin{proposition}

If one of the following inequalities hold

\begin{enumerate}
\item $gcd(x_4,p)=k_1>1;$
\item $gcd(x_4,q)=k_2>1;$
\item $gcd(x_4,p+2x_2)=k_3>1.$
\end{enumerate}

then $\xi$ is not an epimorphism.

\end{proposition}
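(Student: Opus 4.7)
The plan is to invoke Proposition \ref{propcrit}: the morphism $\xi$ is an epimorphism if and only if some $n \times n = 4 \times 4$ minor formed from the images of the generators $g_1,\ldots,g_{15}$ equals $\pm 1$. Because every row of the matrix $M$ listed above lies in the hyperplane of vectors whose coordinates sum to zero, the fourth column is redundant, and it suffices to consider the $3 \times 3$ minors $\det_{i_1,i_2,i_3}$ on the first three columns. Hence to prove $\xi$ is not an epimorphism it is enough to exhibit an integer $k > 1$ dividing every such minor.

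Next, I would pass to the variables $p = x_1 + x_3 + 2x_4$ and $q = x_1 - x_3$, under which (by the tabulation already carried out above) each $\det_{i_1,i_2,i_3}$ becomes a polynomial in $x_2, x_4, p, q$, typically displayed in partially factored form. For case 1, I would verify by direct inspection of this list that in every such polynomial each monomial carries either $x_4$ or $p$ as a factor: terms like $-4x_4 x_2 q$, $-2x_4 p^2$, $-qp^2$, $32x_4^3$ always contain one of these. Under the hypothesis $k_1 \mid \gcd(x_4,p)$, every minor is therefore a multiple of $k_1$. Case 2 is entirely symmetric, with each monomial carrying $x_4$ or $q$, so under $k_2 \mid \gcd(x_4, q)$ every minor is a multiple of $k_2$.

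Case 3 is the subtler one, because the hypothesis involves the non-monomial expression $p + 2x_2$. The cleanest approach is to show that every $\det_{i_1,i_2,i_3}$ lies in the ideal $(x_4, p+2x_2) \subset \mathbb{Z}[x_2, x_4, p, q]$, which forces divisibility by $k_3$ once $k_3 \mid \gcd(x_4, p+2x_2)$. For the many minors already factored through $(2x_4 + p + 2x_2)$, such as $\det_{1,3,12}$, this is immediate. For the remaining ones (for instance $\det_{1,3,6}$), one rewrites each occurrence of $x_2$ via $2x_2 = (p+2x_2) - p$ and checks term by term that the residue modulo $(x_4, p+2x_2)$ cancels; e.g.\ in $\det_{1,3,6}$ the only monomials not manifestly in $(x_4)$ are $-2x_2 qp$ and $-qp^2$, and $-2x_2 qp \equiv qp^2 \pmod{(x_4, p+2x_2)}$ kills $-qp^2$.

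The main obstacle is not conceptual but combinatorial bookkeeping: there are on the order of $\binom{15}{3} = 455$ minors to examine, and even after accounting for repetitions and vanishing ones a full check is naturally delegated to a computer algebra system such as MAPLE, as the author already does elsewhere. The only genuinely subtle step is the ideal argument in case 3, since unlike the first two cases the divisibility is not witnessed monomial by monomial but requires a single linear rewrite of $x_2$.
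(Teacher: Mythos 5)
Your proposal is correct and follows essentially the same route as the paper: the paper's proof is precisely the observation that, after the substitution $p=x_1+x_3+2x_4$, $q=x_1-x_3$, every MAPLE-computed minor $det_{i_1,i_2,i_3}$ (taken on the first three columns, the fourth being redundant since the rows sum to zero) is a multiple of $k_j$ under hypothesis $j$, which contradicts surjectivity onto the rank-$3$ translation lattice by the criterion of section 4. Your only addition is to make explicit the ideal-membership check $(x_4,\,p+2x_2)$ for case 3 (e.g.\ rewriting $2x_2\equiv -p$ so that $-2x_2qp-qp^2\equiv 0$), which the paper leaves implicit in its one-line verification.
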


\begin{proof}

If inequality $j$ holds then all $det_{i_1,i_2,i_3}$ are multiples
of $k_j>1$.

\end{proof}

\begin{proposition}

If $x_1+x_3$ is odd, $gcd(x_1,x_3,x_4)=1$ and
$gcd(x_4,p)=gcd(x_4,q)=gcd(x_4,p+2x_2)=1$, then $\xi$ is an
epimorphism.

\end{proposition}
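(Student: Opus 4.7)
The plan is to invoke Proposition \ref{propcrit} for the matrix $M$ whose rows are the images under $\xi$ of the generators $g_1,\ldots,g_{15}$ of $\phi(\pi_1(p(\mathcal{G}),id))$ listed above. Since $W(\widetilde{A}_3)\simeq \mathbb{Z}^3 \rtimes S_4$, it is enough (by Lemma \ref{lemmadet1} together with Proposition \ref{propcrit1}) to exhibit a finite collection of $3\times 3$ minors of $M$ taken on the first three columns whose greatest common divisor equals $1$. In fact, it will suffice to produce just two such minors that are already coprime.

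I propose to use the two minors
\[
det_{13,14,15} \;=\; -qp(2x_2 + p), \qquad det_{1,2,5} \;=\; -32\,x_4^{3},
\]
whose values are routine $3\times 3$ determinant computations from the tabulated matrix $M$ and agree with the general list obtained by computer algebra. Because $p = x_1+x_3+2x_4$ and $q = x_1-x_3$, each of $p$, $q$, and $2x_2+p$ is congruent to $x_1+x_3$ modulo $2$, so the hypothesis that $x_1+x_3$ is odd forces $det_{13,14,15}$ to be odd; in particular, it is coprime to the factor $32$ appearing in $det_{1,2,5}$. For any odd prime $\ell$ dividing both determinants we would need $\ell\mid x_4$ (from $det_{1,2,5}$) together with $\ell$ dividing one of $p$, $q$, or $2x_2+p$ (from $det_{13,14,15}$); but this is exactly what the three hypotheses $\gcd(x_4,p)=\gcd(x_4,q)=\gcd(x_4,p+2x_2)=1$ forbid. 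Consequently $\gcd(det_{13,14,15},\,det_{1,2,5})=1$, so the GCD of all $3\times 3$ minors of $M$ is $1$, and $\xi$ is an epimorphism.

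The only real obstacle is verifying the two polynomial identities for the chosen minors, but this is a direct $3\times 3$ calculation already performed in the paper via computer algebra. It is worth remarking that the hypothesis $\gcd(x_1,x_3,x_4)=1$ is not invoked explicitly in this argument: it is already a consequence of $\gcd(x_4,q)=1$, since any common divisor of $x_1$, $x_3$, $x_4$ would divide $q = x_1 - x_3$. One could substitute alternative pairs of minors to carry out the same argument, but this particular pair is the cleanest because its $2$-part and its odd part are controlled by disjoint hypotheses of the statement.
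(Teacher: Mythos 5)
Your proposal is correct and is essentially the paper's own proof: the paper likewise observes that oddness of $x_1+x_3$ forces $p$, $q$ and $p+2x_2$ to be odd and then concludes by computing the $gcd$ of the same two minors, $det_{1,2,5}=-32x_4^3$ and $det_{13,14,15}=-qp(2x_2+p)$, which the coprimality hypotheses make equal to $1$. Your write-up only spells out the prime-divisor bookkeeping that the paper leaves implicit, together with the (correct) side remark that $\gcd(x_1,x_3,x_4)=1$ is already implied by $\gcd(x_4,q)=1$.
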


\begin{proof}

We start by seing that if $x_1+x_3$ is odd then $p$, $q$ and
$p+2x_2$ are also odd. Compute the $gcd$ between $det_{1,2,5}$ and
$det_{13,14,15}$.

\end{proof}

\subsection{Case l=6}

When computing $\phi(\pi_1(p(\mathcal{G}),id))$ we obtain only $3$
distinct elements different from zero:

$$[x_1+x_2, -x_1-x_2, x_1+x_3, -x_1-x_3], $$
$$[x_2, x_3, -x_2, -x_3], $$
$$[x_1+x_2+x_3, -x_1, x_1, -x_1-x_2-x_3]$$

\begin{proposition}

The morphism $\xi$ is an epimorphism if and only if
$$det_{1,2,3}=(x_3-x_2)(x_3+x_2)(x_3+x_2+2x_1)=\pm 1.$$

\end{proposition}

So we have 8 epimorphisms, $\xi_{(0,\pm 1,0)}$, $\xi_{(\mp 1,0,\pm
1)}$, $\xi_{(0,0,\pm 1)}$ and $\xi_{(\pm 1,\mp 1,0)}.$

\subsection{Case l=7}

When computing $\phi(\pi_1(p(\mathcal{G}),id))$ we obtain only $3$
distinct elements different from zero:

$$[x_1+x_2, -x_1-x_2, -x_1-x_3, x_3+x_1], $$
$$[x_1+x_2+x_3, -x_1,-x_1-x_2-x_3, x_1], $$
$$ [x_2, x_3, -x_3, -x_2].$$

\begin{proposition}

The morphism $\xi$ is an epimorphism if and only if
$$det_{1,2,3}=(x_3-x_2)(x_3+x_2)(x_3+x_2+2x_1)=\pm 1.$$

\end{proposition}

So we have 8 epimorphisms, $\xi_{(0,\pm 1,0)}$, $\xi_{(\mp 1,0,\pm
1)}$, $\xi_{(0,0,\pm 1)}$ and $\xi_{(\pm 1,\mp 1,0)}.$

\section{Classes of epimorphisms}

In this section we will see which among the previously founded
epimorphisms are actually different modulus an automorphism of
$W(\widetilde{A}_{n-1})$, $n>2$.  We will proceed by analyzing each nontrivial case
introduced in section 7. Note that case 2 has already been
eliminated.

Excepting the case $l=1$, for all other cases we will compute the
image of $\xi(\sigma_i)$ by all graph automorphisms of
$W(\widetilde{A}_{n-1})).$

The sequence of graph automorphisms will be
$id,\gamma,\rho,\rho\gamma,\rho^2,\rho^2\gamma,\rho^3,\rho^3\gamma
$, where $\rho$ and $\gamma$ are, respectively,  the rotation and
the reflection,  introduced in section 6. Note that this group is
the dihedral group of order $8$.

\subsection{Case $l=1$}

\begin{proposition} Let $(y_1,p_1),(y_2,p_2)\in \mathbb{Z}^2$ be different and
$\xi_{(y_1,p_1)}$, $\xi_{(y_2,p_2)}$ be two epimorphisms from
$A(\widetilde{A}_{n-1})$ to $W(\widetilde{A}_{n-1})$. It does not
exist an automorphism $\phi$ of $W(\widetilde{A}_{n-1})$ such that
$$\xi_{y_2,p_2}=\xi_{y_1,p_1}\circ \phi.$$
\end{proposition}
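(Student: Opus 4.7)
The plan is to show rigidity of the canonical family $\{\xi_{(y,p)}\}$ under the automorphism group of $W(\widetilde{A}_{n-1})$. I read the displayed equality $\xi_{y_2,p_2}=\xi_{y_1,p_1}\circ \phi$ as $\phi \circ \xi_{(y_1,p_1)} = \xi_{(y_2,p_2)}$, since $\phi$ is an automorphism of the target and the composition $\xi \circ \phi$ is not well-typed as printed. By Franzsen's theorem cited in Section~6, every automorphism factors as $\phi = \psi_w \circ \alpha$, where $\psi_w$ is conjugation by some $w \in W(\widetilde{A}_{n-1})$ and $\alpha \in \langle \rho, \gamma \rangle$ is a graph automorphism.

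First I would dispose of the purely inner case $\alpha = \mathrm{id}$. Projecting the equation to the quotient $S_n$ forces $p(w)$ to centralize $(i,i+1)$ for every $i<n$ and also $(1,n)$; since these generate $S_n$ and $Z(S_n)$ is trivial for $n \ge 3$, we deduce $p(w) = \mathrm{id}$, so $w = [a_1,\ldots,a_n]$ lies in the translation lattice $T$. A direct conjugation computation shows that $\psi_w$ modifies the translation part of $\xi_{(y_1,p_1)}(\sigma_i)$ by the coboundary $\mathbf{a} - (i,i+1)\cdot \mathbf{a}$, whose only nonzero entries $\pm(a_i - a_{i+1})$ sit in positions $i$ and $i+1$. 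Matching against the canonical form of $\xi_{(y_2,p_2)}(\sigma_i)$ forces $a_i = a_{i+1}$ for every $1 \le i < n$; combined with the defining condition $\sum a_j = 0$ of $T$, this forces $w = 0$, hence $\phi = \mathrm{id}$ and $(y_1,p_1) = (y_2,p_2)$.

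For $\alpha$ nontrivial I would use Propositions~6.6 and~6.8 to expand $\alpha \circ \xi_{(y_1,p_1)}(\sigma_i)$ in the $\{w_k\}$-basis of $T$, then project $\phi \circ \xi_{(y_1,p_1)} = \xi_{(y_2,p_2)}$ to $S_n$ in order to pin $p(w)$ down uniquely (a power of the $n$-cycle for $\alpha = \rho^k$, and an analogous reflection element for $\alpha$ involving $\gamma$), and finally solve for $w$ from the translation equations. The key invariant sustaining this analysis is that the translation subgroup $T \cong \mathbb{Z}^{n-1}$ is characteristic in $W(\widetilde{A}_{n-1})$, so $\phi$ restricts to an element of $GL_{n-1}(\mathbb{Z})$ on $T$, and the pure translation $\xi_{(y,p)}(\sigma_i^2) = [2y,\ldots,-(n-2)y,-(n-2)y,\ldots,2y]$ from Lemma~7.5 is mapped by $\phi|_T$ to the analogous vector for $(y_2,p_2)$, so the multiset of entries is preserved and $|y_1| = |y_2|$. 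A parallel analysis of $\xi(\sigma_n^2)$ and other pure translations of the form $\sigma_1 \sigma_n$ (once cubed to kill the $S_n$-part) constrains $p$, eventually reducing to $(y_1,p_1) = (y_2,p_2)$.

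The main obstacle I expect is the reflection $\gamma$: Proposition~6.8 expresses $\gamma(w_i)$ via the element $w_3$ rather than as a clean index shift, so pulling $\gamma \circ \xi_{(y_1,p_1)}(\sigma_n)$ back to canonical form by an inner correction $\psi_w$ requires careful bookkeeping to verify that no nontrivial correction can simultaneously repair every generator's image and land in $\xi_{(y_2,p_2)}$ unless $(y_1,p_1) = (y_2,p_2)$.
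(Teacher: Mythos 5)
Your reading of the misprinted equation as $\phi\circ\xi_{(y_1,p_1)}=\xi_{(y_2,p_2)}$ is the intended one, and your inner case is correct --- in fact tighter than the paper's own argument. The paper dismisses a nontrivial action on the $S_n$-quotient by invoking the classification of epimorphisms onto $S_n$ (Theorem~\ref{thoArtin}), and then observes only that an inner automorphism permutes the coordinates of the elements $[Q_1(y,p),\ldots,Q_n(y,p)]$ in the image of the kernel of the projection to $S_n$, concluding equality of the parameters ``modulo inner by graph automorphisms''; your coboundary computation ($\psi_w$ shifts the translation part of $\xi(\sigma_i)$ by $\mathbf{a}-(i,i+1)\cdot\mathbf{a}$, forcing $a_i=a_{i+1}$ for all $i<n$, whence $\sum a_j=0$ gives $w=0$, and then $y_1=y_2$, $p_1=p_2$ by matching generator images) reaches the same conclusion constructively. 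Note, however, that the paper's proof of this proposition really only settles the inner case as well: the twisted automorphisms $\psi_w\circ\alpha$ with $\alpha\in\langle\rho,\gamma\rangle$ nontrivial are handled by the remainder of Section~8.1 (Propositions~\ref{proprhoxi}, \ref{proprhok}, \ref{propgamma} and the lemmas on $\xi_{(0,\pm1)}$), culminating in the closing proposition of that subsection.

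For $\alpha\neq\mathrm{id}$ your proposal has a genuine gap. First, the step ``$\phi$ restricts to an element of $GL_{n-1}(\mathbb{Z})$ on $T$, so the multiset of entries is preserved'' is a non sequitur: a general element of $GL_{n-1}(\mathbb{Z})$ does not preserve entry multisets. What rescues the claim is the explicit computation (Propositions~\ref{proprho1} and~\ref{propsym}) that $\rho$ acts on $T$ as the coordinate permutation $(1,2,\ldots,n)$ and $\gamma$ as the negative of a coordinate permutation, so that $\operatorname{Aut}(W(\widetilde{A}_{n-1}))$ acts on $T$ by signed permutations --- but this you would have to prove. More seriously, the invariants you propose are blind in exactly the hardest case, $y=0$: there $\xi_{(0,p)}(\sigma_i^2)$ and $\xi_{(0,p)}(\sigma_n^2)$ are the trivial translation (Lemma~\ref{lemmagn} with $y=0$), $\xi_{(0,p)}(g_k)=p(e_k-e_{k+1})$ has entry multiset $\{p,-p,0,\ldots,0\}$ (Lemma~\ref{lemmagk}), and the image in $T$ of the kernel of the projection to $S_n$ is the full root lattice for either sign $p=\pm1$. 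Hence no multiset or lattice invariant of $\phi|_T$ can separate $\mu=\xi_{(0,1)}$ from $\xi_{(0,-1)}$, and your plan ``a parallel analysis of $\xi(\sigma_n^2)$ \ldots constrains $p$'' collapses precisely there. This is why the paper needs the dedicated Lemmas~\ref{lemmarhok01}, \ref{lemmagamma01} and \ref{lemmagammarhok01}: since $\rho(\mu(\sigma_i))=\rho(\xi_{(0,-1)}(\sigma_i))$ for all $i\neq n$, the two generator tuples differ in a single slot, and one must verify generator by generator that no graph twist (with inner correction) repairs that slot. Your final paragraph flags the $\gamma$-bookkeeping as an obstacle but leaves it, together with this $y=0$ degeneration, unresolved; so as written the proposal proves the proposition only up to inner automorphisms, not for arbitrary ones.
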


\begin{proof}

Suppose that exist a $\phi$ in the above conditions. We may assume
that its restriction to $S_n$ is, modulo graph conjugation in
$S_n$, the identity. Otherwise we had a new class of epimorphism
of $A(\widetilde{A}_{n-1})$ to $S_n$ which different from $\xi_1$. We
have that, for

$$\xi(\sigma_i)=[\underbrace{y,\ldots,y}_{i-1},-(n-2)y-x_i,x_i,y,\ldots,y](i,i+1)$$

$$\phi(\xi(\sigma_i))=[u_1,\ldots,u_n](i,i+1).$$

So remains to see how does it behaves when restricted to
$\xi(Ker(p \circ \xi))$. Let $g=[Q_1(y,p),\ldots,Q_n(y,p)]$, in
this case being $\phi$ an inner automorphism, a simple
computation, shows that

$$\phi(g)=[Q_{s(1)}(y,p),\ldots,Q_{s(n)}(y,p)]$$
for some permutation $s\in S_n.$ So modulo inner by graph
automorphisms of $A(\widetilde{A}_{n-1})$ we must have
$(y_1,p_1)=(y_2,p_2)$.

\end{proof}

Now we will see how the inner by graph automorphisms of
$W(\widetilde{A}_{n-1})$ acts on $\xi_{(y,p)}$, for $gcd(y,p)=1$.

\begin{lemma}\label{lemmarhoin}

Let  $y,p\in \mathbb{Z}$ and $gcd(y,p)=1$. Then we have:

$$\rho((i,n))=[1,\underbrace{0,\ldots,0}_{i-1},-1,0,\ldots,0](1,i+1)=w_1w_{i+1}^{-1}(1,i+1), \text{ for } 1 \leq i<n,$$

$$\rho((1,i))=(2,i+1)=s_1(1,i+1)s_1, \text{ for } 1< i \leq n-1,$$

\end{lemma}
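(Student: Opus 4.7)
The plan is to reduce each identity to an explicit computation in the semidirect product $\mathbb{Z}^n\rtimes S_n$, by first writing the transposition in the standard reduced form, then applying $\rho$ termwise, and finally simplifying.

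For the first identity, I would start by writing the transposition
$$
(i,n)=s_i s_{i+1}\cdots s_{n-2} s_{n-1} s_{n-2}\cdots s_{i+1} s_i,
$$
which is the standard reduced word for $(i,n)$ inside $S_n\subset W(\widetilde{A}_{n-1})$. Since $\rho(s_j)=s_{j+1}$ for every $j$ appearing (indices $i,\ldots,n-1$ all less than $n$), applying $\rho$ simply shifts indices by one, giving
$$
\rho((i,n))=s_{i+1}\cdots s_{n-1} s_n s_{n-1}\cdots s_{i+1}.
$$
Then the key step is a direct computation showing that conjugation of $s_n=[1,0,\ldots,0,-1](1,n)$ by the block $s_{n-1}\cdots s_{i+2}$ transports the $-1$ from position $n$ to position $i+1$ and converts the permutation $(1,n)$ into $(1,i+1)$. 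This is best done by induction on $n-i$: the base case $s_{n-1}s_n s_{n-1}$ is a one-line calculation using the semidirect product multiplication rule; the inductive step left-multiplies by $s_k$ and right-multiplies by $s_k$, which on the vector $[1,0,\ldots,0,-1_{k+1},0,\ldots,0]$ swaps coordinates $k$ and $k+1$ while conjugating $(1,k+1)$ to $(1,k)$. Once the vector equals $[1,0,\ldots,0,-1,0,\ldots,0]$ (with $-1$ in position $i+1$), the identification $w_1 w_{i+1}^{-1}$ follows immediately from the definition of $w_k$ recalled via Lemma \ref{lemmaw}.

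For the second identity, the point is that $(1,i)$ for $i\leq n-1$ has a reduced expression $(1,i)=s_1 s_2\cdots s_{i-2} s_{i-1} s_{i-2}\cdots s_2 s_1$ involving none of $s_n$, so $\rho$ acts by a pure index shift and delivers
$$
\rho((1,i))=s_2 s_3\cdots s_{i-1} s_i s_{i-1}\cdots s_3 s_2,
$$
which is a reduced expression for $(2,i+1)$ in $S_n$. The equality $(2,i+1)=s_1(1,i+1)s_1$ is then just the conjugation of $(1,i+1)$ by $s_1=(1,2)$.

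The main obstacle is the bookkeeping in the semidirect product for the first formula: one must carefully track how the coordinate $-1$ of the vector part of $s_n$ gets permuted as $s_n$ is conjugated by a chain of adjacent transpositions. Once an invariant of the form ``after conjugation by $s_{n-1}\cdots s_{k+1}$, the element equals $[1,0,\ldots,0,-1_{k+1},0,\ldots,0](1,k+1)$'' is set up, the induction closes in one line using the rule $[v]\sigma\cdot[w]\tau=[v+\sigma\cdot w]\sigma\tau$. The rest is mechanical.
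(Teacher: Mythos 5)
Your proof is correct and takes essentially the same route as the paper's: both write the transposition as the standard reduced word in the generators, apply $\rho$ termwise (every index occurring is below $n$), and then simplify the resulting conjugate of $s_n$ inside $\mathbb{Z}^n\rtimes S_n$ — your induction just makes explicit the one-line simplification the paper performs directly, and the second identity is handled identically in both. One cosmetic slip: the conjugating block should be $s_{n-1}\cdots s_{i+1}$ rather than $s_{n-1}\cdots s_{i+2}$, but the invariant you actually set up (conjugation by $s_{n-1}\cdots s_{k+1}$ yields the vector with $-1$ in coordinate $k+1$ and permutation $(1,k+1)$) is the correct one and closes the argument at $k+1=i+1$.
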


\begin{proof}

We can write the permutation $(i,n)$ in terms of the generators $s_i$ of $W(\widetilde{A}_{n-1})$. So $(i,n)=s_{i}\cdots s_{n-1}s_{n-2}\cdots s_i$, hence, by definition, $\rho((i,n))=s_{i+1}\cdots s_{n}s_{n-1}\cdots s_{i+1}.$ We simplify to obtain $\rho((i,n))=[1,\underbrace{0,\ldots,0}_{i-1},-1,0,\ldots,0]s_{i+1}\cdots s_{n-1}(1,n)s_{n-1}\cdots s_{i+1}=[1,\underbrace{0,\ldots,0}_{i-1},-1,0,\ldots,0](1,i).$
The proof of the second equality is very similar. We have $(1,i)=s_{1}\cdots s_{i-1}s_{i-2}\cdots s_1$, $i\leq n$,  and, by definition
$\rho((1,i))=s_{2}\cdots s_{i}s_{i-1}\cdots s_2=(2,i+1)$, $i \leq n-1$.

\end{proof}

\begin{proposition}\label{proprhoxi}

Let $n>2$, $1\leq i\leq n$, $y,p\in \mathbb{Z}$ and $gcd(y,p)=1$. The rotation $\rho$ acts on $\xi_{(y,p)}$ in the following way:

$$\rho(\xi_{(y,p)}(\sigma_i))=\xi_{(y,p)}(\sigma_{i+1}) \text{ for }1\leq i\leq n-2;$$

$$\rho(\xi_{(y,p)}(\sigma_{n-1}))=w_1^{1-p}\xi_{(y,p)}(\sigma_{n});$$

$$\rho(\xi_{(y,p)}(\sigma_n))=w_1^{1-p}w_2^{p-1}\xi_{(y,p)}(\sigma_1).$$

\end{proposition}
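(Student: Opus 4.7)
The plan is to exploit the fact that $\rho$ is a group homomorphism, so $\rho([v]\pi)=\rho([v])\,\rho(\pi)$ for any translation $[v]$ and permutation $\pi$. Thus for each generator $\sigma_i$ I would split $\xi_{(y,p)}(\sigma_i)$ into its translation part (expressed in the basis $w_1,\ldots,w_{n-1}$) and its permutation part, apply $\rho$ to each piece using Proposition \ref{proprho1} and Lemma \ref{lemmarhoin}, and then reassemble.

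For the first formula ($1\leq i\leq n-2$), the translation part $[y,\ldots,y,-(n-2)y,0,y,\ldots,y]$ decomposes as $y\,w_1+\cdots+y\,w_{i-1}-(n-2)y\,w_i+0\cdot w_{i+1}+y\,w_{i+2}+\cdots+y\,w_{n-1}$. Applying Proposition \ref{proprho1}, each factor $w_k$ (for $k<n-1$) is replaced by $w_{k+1}w_1^{-1}$, while $w_{n-1}$ is replaced by $w_1^{-1}$. The shifted pieces telescope into the pattern for $\xi_{(y,p)}(\sigma_{i+1})$, and the $w_1^{-1}$ correction terms should combine (with coefficients $-(i-1)y+(n-2)y-(n-i-3)y-y=y$) to produce the leading $y\,w_1$ needed for $v_{i+1}$. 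Since $\rho(s_i)=s_{i+1}$ remains an ordinary transposition for $i\leq n-2$, the first identity follows directly.

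For the second formula ($i=n-1$), the translation part calculation is analogous, but now $\rho(s_{n-1})=s_n$, and by the definition fixed in Section~6, $s_n=w_1\cdot(1,n)$ carries a non-trivial translation $w_1$. I would push this extra $w_1$ through and compare with $\xi_{(y,p)}(\sigma_n)=[p,y,\ldots,y,-(n-2)y-p](1,n)$; the discrepancy in the first and last coordinates is $1-p$, which is exactly accounted for by a left factor of $w_1^{1-p}$. For the third formula ($i=n$), I would first rewrite the permutation part $(1,n)$ as $w_1^{-1}s_n$ so that $\rho((1,n))=\rho(w_1)^{-1}\rho(s_n)=w_2^{-1}w_1\,s_1$. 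The translation part $[p,y,\ldots,y,-(n-2)y-p]$, written as $(p-1)w_1+y(w_2+\cdots+w_{n-1})$ after absorbing the $w_1^{-1}$, transforms under $\rho$ into a combination which, once multiplied by the extra $w_2^{-1}w_1$ from $\rho((1,n))$ and by $s_1$, should collapse to $w_1^{1-p}w_2^{p-1}\xi_{(y,p)}(\sigma_1)=w_1^{1-p}w_2^{p-1}[-(n-2)y,0,y,\ldots,y](1,2)$.

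The main obstacle is purely computational bookkeeping: tracking how the correction terms $w_1^{-1}$ from each application of $\rho(w_k)=w_{k+1}w_1^{-1}$ accumulate, together with the special behavior $\rho(w_{n-1})=w_1^{-1}$ (which lacks a ``next'' $w_n$), and combining them with the $w_1$ produced by $\rho(s_{n-1})=s_n$ in cases (2) and (3). The coefficients of $p$ and $y$ must be separated and matched carefully so that the residual factors assemble exactly into $w_1^{1-p}$ and $w_2^{p-1}$, rather than into some other product of the $w_k$.
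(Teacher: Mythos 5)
Your proposal is correct and follows essentially the same route as the paper, whose proof is precisely the ``simple computation using Proposition~\ref{proprho1}'' you describe: decompose each $\xi_{(y,p)}(\sigma_i)$ into its translation part written in the basis $w_1,\ldots,w_{n-1}$ and its permutation part, apply $\rho(w_k)=w_{k+1}w_1^{-1}$, $\rho(w_{n-1})=w_1^{-1}$, $\rho(s_i)=s_{i+1}$, $\rho(s_n)=s_1$, and use $s_n=w_1(1,n)$ to reassemble (all three identities check out numerically as you indicate). One caution on the case $i=n$: you must commit to a single decomposition --- either $\xi_{(y,p)}(\sigma_n)=w_1^{p-1}\prod_{k=2}^{n-1}w_k^{y}\,s_n$ with $\rho(s_n)=s_1$, or keep the exponent $p$ on $w_1$ and use $\rho((1,n))=w_2^{-1}w_1s_1$ --- since absorbing the $w_1^{-1}$ into the translation \emph{and} inserting the ``extra'' $w_2^{-1}w_1$ from $\rho((1,n))$, as your wording suggests, double-counts that factor and would yield $w_1^{2-p}w_2^{p-2}\xi_{(y,p)}(\sigma_1)$ instead of the stated $w_1^{1-p}w_2^{p-1}\xi_{(y,p)}(\sigma_1)$.
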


\begin{proof}

This results of simple computations using proposition \ref{proprho1}.

\end{proof}

\begin{proposition}\label{proprhok}

Let $k>1$, $n>2$, $1\leq i\leq n$, $y,p\in \mathbb{Z}$ and $gcd(y,p)=1$ We have

$$\rho^k(\xi_{(y,p)}(\sigma_i))=\xi_{(y,p)}(\sigma_{i+k\text{ mod }n})\text{ for } i+k-1\text{ mod }n\neq 0,n-1 \text{ and }i\neq n;$$

$$\rho^k(\xi_{(y,p)}(\sigma_i))=w_1^{1-p}\xi_{(y,p)}(\sigma_{n})\text{ for } i+k-1\text{ mod }n = n-1;\text{ and }i\neq n$$

$$\rho^k(\xi_{(y,p)}(\sigma_i))=w_2^{1-p}w_1^{p-1}\xi_{(y,p)}(\sigma_{1})\text{ for } i+k-1\text{ mod }n = 0\text{ and }i\neq n;$$

$$\rho^k(\xi_{(y,p)}(\sigma_n))=w_kw_{k+1}^{-1}\xi_{(y,p)}(\sigma_{k})\text{ for } k-1\text{ mod }n \neq n-1;$$

$$\rho^{n-1}(\xi_{(y,p)}(\sigma_n))=w_{n-1}\xi_{(y,p)}(\sigma_{n-1}).$$

\end{proposition}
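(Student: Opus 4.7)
The plan is to prove this by induction on $k$, using Proposition \ref{proprhoxi} as the base case $k=1$ and Proposition \ref{proprho1} to track how $\rho$ acts on the correction factors $w_i$ at each step.

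For the generic case $\rho^k(\xi_{(y,p)}(\sigma_i))=\xi_{(y,p)}(\sigma_{i+k\bmod n})$ where no wrap-around occurs, the inductive step is immediate: applying $\rho$ to $\xi_{(y,p)}(\sigma_{i+k-1})$ with $i+k-1 \leq n-2$ simply yields $\xi_{(y,p)}(\sigma_{i+k})$ by Proposition \ref{proprhoxi}. The more delicate cases are those in which an additional application of $\rho$ lands on the boundary index $n-1$, crosses the cycle back to $1$, or begins at $\sigma_n$.

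For the boundary case $i+k-1 \equiv n-1 \pmod n$, I would apply the inductive hypothesis to reach $\xi_{(y,p)}(\sigma_{n-1})$ and then apply the second identity of Proposition \ref{proprhoxi} to pick up the single factor $w_1^{1-p}$. For the wrap-around case $i+k-1 \equiv 0 \pmod n$, I would first express $\rho^{k}(\xi_{(y,p)}(\sigma_i)) = \rho(\rho^{k-1}(\xi_{(y,p)}(\sigma_i))) = \rho(w_1^{1-p}\xi_{(y,p)}(\sigma_n))$, and then use Proposition \ref{proprhoxi} together with $\rho(w_1)=w_2w_1^{-1}$ from Proposition \ref{proprho1} to obtain
\[
\rho(w_1^{1-p})\cdot w_1^{1-p}w_2^{p-1}\xi_{(y,p)}(\sigma_1)=(w_2w_1^{-1})^{1-p}w_1^{1-p}w_2^{p-1}\xi_{(y,p)}(\sigma_1)=w_2^{1-p}w_1^{p-1}\xi_{(y,p)}(\sigma_1),
\]
which gives the stated formula after rearranging the abelian factors.

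For the cases starting at $\sigma_n$, the base step $k=1$ is $\rho(\xi_{(y,p)}(\sigma_n)) = w_1^{1-p}w_2^{p-1}\xi_{(y,p)}(\sigma_1)$. Applying $\rho^{k-1}$ and using the already-established shift formula for $\xi_{(y,p)}(\sigma_1)$ (which sends $\sigma_1$ to $\sigma_k$ without crossing a boundary when $k-1 \not\equiv n-1 \bmod n$), one obtains
\[
\rho^{k-1}(w_1^{1-p}w_2^{p-1})\cdot\xi_{(y,p)}(\sigma_k).
\]
Iterating $\rho(w_j) = w_{j+1}w_1^{-1}$ for $j<n-1$ (and $\rho(w_{n-1})=w_1^{-1}$), a straightforward bookkeeping argument, run inside $\mathbb{Z}^{n-1}$ where everything commutes, shows that $\rho^{k-1}(w_1^{1-p}w_2^{p-1})$ collapses to $w_kw_{k+1}^{-1}$; the cross-terms involving $w_1$ cancel because the exponents $1-p$ and $p-1$ are negatives of each other. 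For the final case $k=n-1$, the same bookkeeping uses $\rho^{n-2}(w_2) = w_1^{-1}$ (from iterating Proposition \ref{proprho1}) to give the exceptional answer $w_{n-1}\xi_{(y,p)}(\sigma_{n-1})$.

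The main obstacle is the careful accounting of how the correction factors $w_1^{1-p}$ and $w_2^{p-1}$ transform under iterated applications of $\rho$, since $\rho$ not only shifts the index but also introduces a power of $w_1^{-1}$. The fact that these factors appear with opposite exponents is what makes the final expressions collapse to the clean form $w_kw_{k+1}^{-1}$; verifying this cancellation carefully across all index ranges is the essential calculation.
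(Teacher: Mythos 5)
Your overall strategy---induction on $k$, with Proposition \ref{proprhoxi} as the base case and Proposition \ref{proprho1} to transport the correction factors---is exactly what the paper intends (its entire proof is the one sentence that the result is ``a direct use of Proposition \ref{proprho1}''), and your handling of the first two cases is correct. But the crucial prefactor bookkeeping fails. In the wrap-around case your displayed identity is false: the $w_j$ are translations and commute, so
\[
(w_2w_1^{-1})^{1-p}\,w_1^{1-p}w_2^{p-1}
= w_2^{(1-p)+(p-1)}\,w_1^{(p-1)+(1-p)} = 1,
\]
and the correct conclusion of your own computation is $\rho^k(\xi_{(y,p)}(\sigma_i))=\xi_{(y,p)}(\sigma_1)$ with \emph{trivial} prefactor, not $w_2^{1-p}w_1^{p-1}\xi_{(y,p)}(\sigma_1)$. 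Likewise in the $\sigma_n$ cases: since $w_1^{1-p}w_2^{p-1}=(w_1w_2^{-1})^{1-p}$ and $\rho(w_jw_{j+1}^{-1})=w_{j+1}w_{j+2}^{-1}$ for $j\le n-3$, with $\rho(w_{n-2}w_{n-1}^{-1})=w_{n-1}$, one gets $\rho^{k-1}(w_1^{1-p}w_2^{p-1})=(w_kw_{k+1}^{-1})^{1-p}$ and $\rho^{n-2}(w_1^{1-p}w_2^{p-1})=w_{n-1}^{1-p}$. Your claimed ``collapse'' to $w_kw_{k+1}^{-1}$ and $w_{n-1}$ silently discards the exponent $1-p$, which is legitimate only when $1-p=1$; the $w_1$ cross-terms do cancel, as you say, but the exponent survives.

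This is not a repairable slip in the write-up alone: executed correctly, your induction proves $\rho^k(\xi_{(y,p)}(\sigma_i))=\xi_{(y,p)}(\sigma_1)$ when $i+k-1\equiv 0\pmod{n}$, and $\rho^k(\xi_{(y,p)}(\sigma_n))=(w_kw_{k+1}^{-1})^{1-p}\xi_{(y,p)}(\sigma_k)$, $\rho^{n-1}(\xi_{(y,p)}(\sigma_n))=w_{n-1}^{1-p}\xi_{(y,p)}(\sigma_{n-1})$, which differ from the printed statement---and one can check that it is the printed formulas that are at fault. For $p=1$ the morphism $\xi_{(0,1)}=\mu$ sends each $\sigma_i$ to $s_i$, so $\rho^k(\mu(\sigma_n))=s_k=\mu(\sigma_k)$, contradicting the stated extra factor $w_kw_{k+1}^{-1}$; and the stated third case with $i=1$, $k=n$ contradicts the remark immediately following the proposition that $\rho^n(\xi_{(y,p)}(\sigma_i))=\xi_{(y,p)}(\sigma_i)$. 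So your argument only lands on the stated formulas because of the algebra errors above; since the paper offers no computation of its own here, a correct proof along your (otherwise sound) lines establishes the corrected identities, not the proposition as printed.
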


\begin{proof}

This proof is a direct use of Proposition \ref{proprho1}.

\end{proof}

\begin{remark}

Note that, $\rho^n(\xi_{(y,p)}(\sigma_i))=\xi_{(y,p)}(\sigma_i)$ for $1 \leq i\leq n.$

\end{remark}

\begin{proposition}\label{propgamma}

In the previous conditions we have:

$$\gamma(\xi_{(y,p)}(\sigma_1))=s_{1}\xi_{(-y,p)}(\sigma_{1})s_{1};$$

$$\gamma(\xi_{(y,p)}(\sigma_2))=w_1^{-p}(1,n)\xi_{(-y,p)}(\sigma_n)s_n;$$

$$\gamma(\xi_{(y,p)}(\sigma_n))=w_2^{1-p}w_3^{p-1}s_2\xi_{(-y,p)}(\sigma_2)s_2.$$

$$\gamma(\xi_{(y,p)}(\sigma_i))=s_{n-i+2}\xi_{(-y,p)}(\sigma_{n-i+2})s_{n-i+2}\text{
for }i\neq 1,2,n;$$

\end{proposition}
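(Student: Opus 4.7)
The plan is to reduce the proof to a direct computation by writing each $\xi_{(y,p)}(\sigma_i)$ in canonical ``$w$-then-$s$'' form in the semidirect-product description of $W(\widetilde{A}_{n-1})$, then applying $\gamma$ termwise via Proposition \ref{propsym}, and finally matching with the right-hand sides of the four displayed formulas.

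First I would record the canonical decompositions. Since $w_1^{a_1}\cdots w_{n-1}^{a_{n-1}}$ realises the translation $[a_1,\ldots,a_{n-1},-(a_1+\cdots+a_{n-1})]$ and $(1,n)=w_1^{-1}s_n$, one checks
$$\xi_{(y,p)}(\sigma_i)=\Bigl(\prod_{\substack{1\le k\le n-1\\ k\ne i,\,i+1}}w_k^y\Bigr)\,w_i^{-(n-2)y}\,s_i\qquad(1\le i\le n-1)$$
and
$$\xi_{(y,p)}(\sigma_n)=w_1^{p-1}\,w_2^y\cdots w_{n-1}^y\,s_n.$$
Because $\gamma$ is a group automorphism and the $w_k$'s pairwise commute, applying $\gamma$ reduces to plugging in $\gamma(w_k)$ from Proposition \ref{propsym} and $\gamma(s_j)$ from the defining formulas, and then collecting exponents.

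For the generic case $i\in\{3,\ldots,n-1\}$, every factor $\gamma(w_k)^y$ with $k\ge 4$ contributes both a $w_3^y$ and a $w_{n+3-k}^{-y}$; combined with $\gamma(w_1)=w_2^{-1}w_3$, $\gamma(w_2)=w_3w_1^{-1}$, $\gamma(w_3)=w_3$, and the single $w_i^{-(n-2)y}$, the accumulated $w_3$-exponent collapses to $-y$. Re-indexing $j:=n+3-k$ in the remaining product then displays, up to one missing $w_{n-i+2}$-factor, the translation part of $\xi_{(-y,p)}(\sigma_{n-i+2})$, while $\gamma(s_i)=s_{n-i+2}$ provides the correct permutation. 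The proof is closed by using that $s_{n-i+2}$ conjugates the $w_k$'s by swapping coordinates $n-i+2$ and $n-i+3$, which supplies the missing $w_{n-i+2}$ factor and identifies the product with $s_{n-i+2}\xi_{(-y,p)}(\sigma_{n-i+2})s_{n-i+2}$.

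The three remaining equations are handled by the same recipe but with extra bookkeeping. For $i=1$ the decomposition involves no $w_1^y$ or $w_2^y$ factor fed through the exceptional formulas of Proposition \ref{propsym}, so after noting $s_1w_1s_1=w_2$ and $s_1w_ks_1=w_k$ for $k\ge 3$, the identity drops out directly. For $i=2$ and $i=n$, the asymmetry between the non-uniform $\gamma(w_1),\gamma(w_2)$ and the uniform $\gamma(w_k)$ for $k\ge 4$ produces extra $w_1$- and $w_2$-exponents that must be matched against the prefactors $w_1^{-p}(1,n)$ and $w_2^{1-p}w_3^{p-1}$ on the right-hand sides; these corrections also absorb the $w_1$-shift coming from $(1,n)=w_1^{-1}s_n$ whenever $\sigma_n$ appears. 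The main obstacle is therefore not conceptual but combinatorial: carefully tracking the $w_1$- and $w_2$-exponents produced by the exceptional values of $\gamma$ on $w_1,w_2,w_3$ and by the rewriting $(1,n)=w_1^{-1}s_n$, and verifying that the residual correction terms match exactly the prefactors appearing in the statement.
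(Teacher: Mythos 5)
Your proposal is correct and follows the paper's own argument essentially verbatim: the paper likewise proves this by writing each $\xi_{(y,p)}(\sigma_i)$ as a product of the $w_k$'s times a standard generator (using $(1,n)=w_1^{-1}s_n$ for $\sigma_n$), pushing $\gamma$ through termwise via Proposition \ref{propsym}, collecting the $w_3$-exponent (which indeed collapses to $-y$ in the generic case), and matching the residual $w_1$- and $w_2$-prefactors in the exceptional cases $i=1,2,n$ exactly as you describe. The one point to watch is that $i=3$ is not quite generic --- there the exceptional formula $\gamma(w_3)=w_3$ applies instead of the uniform $\gamma(w_i)=w_3w_{n+3-i}^{-1}$, so the $(n-2)y$-exponent stays on $w_3$ rather than migrating to $w_{n-i+3}$ --- but the computation still closes with the stated right-hand side, and the paper accordingly treats $\sigma_3$ as a separate subcase of the general formula.
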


\begin{proof}

This is a direct consequence of proposition \ref{propsym}.

\end{proof}

\begin{proposition}

Let $k>0$, $n>2$, $1\leq i\leq n$, $y,p\in \mathbb{Z}$ and $gcd(y,p)=1$ We have

\begin{enumerate}

\item $\gamma\rho^k(\xi_{(y,p)}(\sigma_i))=w_1^{-p}(1,n)\xi_{(-y,p)}(\sigma_n)s_n \text{ for } i+k-1\text{ mod }n=1 \text{ and }i\neq n;$

\item $\gamma\rho^k(\xi_{(y,p)}(\sigma_i))=\xi_{(y,p)}(\sigma_{n+3-(i+k\text{ mod }n)}) \text{ for } i+k-1\text{ mod }n \neq 0,1,n-1 \text{ and }i\neq n;$

\item $\gamma\rho^k(\xi_{(y,p)}(\sigma_i))=w_3^{p-1}s_2\xi_{(-y,p)}(\sigma_2)s_2\text{ for } i+k-1\text{ mod }n = n-1;\text{ and }i\neq n$

\item $\gamma\rho^k(\xi_{(y,p)}(\sigma_i))=w_1^{1-p}w_2^{p-1}s_1\xi_{(-y,p)}(\sigma_{1})s_1 \text{ for } i+k-1\text{ mod }n = 0\text{ and }i\neq n;$

\item $\gamma\rho^2(\xi_{(y,p)}(\sigma_n))=w_2^{-1}w_1^{-p}(1,n)\xi_{(-y,p)}(\sigma_n)s_n \text{ for } k-1\text{ mod }n=1;$

\item $\gamma\rho^3(\xi_{(y,p)}(\sigma_n))=w_4s_{n-1}\xi_{(-y,p)}(\sigma_{n-1})s_{n-1}\text{ for } k-1\text{ mod }n=2;$

\item $\gamma\rho^{k}(\xi_{(y,p)}(\sigma_n))=w_{k+1}w_{k}^{-1}s_{n+3-k}\xi_{(-y,p)}(\sigma_{n+3-k})s_{n+3-k} \text{ for } k-1\text{ mod }n \neq 1,2,n-1;$

\item $\gamma\rho^{n-1}(\xi_{(y,p)}(\sigma_n))=w_3s_3\xi_{(-y,p)}(\sigma_3)s_3.$

\end{enumerate}

\end{proposition}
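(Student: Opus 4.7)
The plan is to treat this proposition as a direct consequence of compositing the two formulas already established: Proposition~\ref{proprhok} which describes $\rho^{k}$ on each $\xi_{(y,p)}(\sigma_{i})$, and Proposition~\ref{propgamma} which describes $\gamma$ on each $\xi_{(y,p)}(\sigma_{j})$. Since $\gamma\rho^{k}(x)=\gamma(\rho^{k}(x))$, the proof reduces to a case-by-case bookkeeping argument: first apply $\rho^{k}$ to obtain an expression of the form $w\cdot\xi_{(y,p)}(\sigma_{j})$ (with $j=i+k\bmod n$ and $w$ a word in the $w_{\ell}$'s), then apply $\gamma$ and rewrite using Proposition~\ref{propsym} and the basic fact that $\gamma$ flips the sign of $y$ in the image (this is already visible in Proposition~\ref{propgamma}).

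Concretely, I would split the verification exactly along the case distinctions already organised in Proposition~\ref{proprhok}. For cases (1)--(4), corresponding to $i\neq n$, the inner index $j=i+k\bmod n$ falls into one of four situations --- it is $1$, a generic value in $\{3,\ldots,n-2\}$, it equals $n-1$, or it equals $n$ --- and Proposition~\ref{proprhok} gives $\rho^{k}(\xi_{(y,p)}(\sigma_{i}))$ accordingly (either $\xi_{(y,p)}(\sigma_{j})$, or $w_{1}^{1-p}\xi_{(y,p)}(\sigma_{n})$, or $w_{2}^{1-p}w_{1}^{p-1}\xi_{(y,p)}(\sigma_{1})$). Then applying $\gamma$ and invoking the four subcases of Proposition~\ref{propgamma} (the identities for $\gamma(\xi_{(y,p)}(\sigma_{1}))$, $\gamma(\xi_{(y,p)}(\sigma_{2}))$, $\gamma(\xi_{(y,p)}(\sigma_{n}))$, and the generic $\gamma(\xi_{(y,p)}(\sigma_{i}))$) produces exactly the right-hand sides announced in items (1)--(4). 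The remaining items (5)--(8), corresponding to $i=n$, are handled by the same recipe but starting from the second batch of formulas in Proposition~\ref{proprhok}, namely $\rho^{k}(\xi_{(y,p)}(\sigma_{n}))=w_{k}w_{k+1}^{-1}\xi_{(y,p)}(\sigma_{k})$ (or $w_{n-1}\xi_{(y,p)}(\sigma_{n-1})$ when $k=n-1$); here the subcases $k-1\equiv 1,2,n-1\pmod{n}$ arise because after the shift, the generator index lands on $\sigma_{2}$, $\sigma_{3}$, or $\sigma_{n-1}$, each of which is treated by a different clause of Proposition~\ref{propgamma}.

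The arithmetic obstacle is tracking the $w$-factor through $\gamma$. When the prefactor contains $w_{1}$, $w_{2}$, or $w_{3}$, Proposition~\ref{propsym} replaces it by a non-trivial product (e.g.\ $\gamma(w_{1})=w_{2}^{-1}w_{3}$ and $\gamma(w_{2})=w_{3}w_{1}^{-1}$), which then has to be combined with the $w$-prefactor that already appears in the relevant formula from Proposition~\ref{propgamma}. Simplification is routine but must use that $\mathbb{Z}^{n-1}$ is abelian (the $w_{\ell}$'s commute), together with the relation $\prod w_{\ell}$ coming from $\sum x_{i}=0$; in each case the net $w$-exponent on the right collapses to the stated expression ($w_{k+1}w_{k}^{-1}$, $w_{4}$, $w_{3}$, etc.). I expect this exponent-accounting in cases (5)--(8) to be the only real obstacle, since cases (2) in particular simplify to a bare $\xi_{(y,p)}(\sigma_{n+3-(i+k)})$ with no residual $w$-factor --- meaning that the contributions from $\gamma\circ\rho^{k}$ and from the $w$-prefactor must cancel exactly, which is a good consistency check on the computation.

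Having carried out these eight case-verifications, the proposition is established; no further ingredients are required beyond Propositions~\ref{proprho1}, \ref{propsym}, \ref{proprhok}, and~\ref{propgamma} already proved.
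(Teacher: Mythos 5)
Your proposal is correct and is essentially the paper's own proof: the paper dispatches this proposition in a single line, stating that it follows by working through the cases of Proposition \ref{proprhok} one by one and using Proposition \ref{propsym}, which is precisely your scheme of composing $\rho^k$ (via Proposition \ref{proprhok}) with $\gamma$ (via Propositions \ref{propgamma} and \ref{propsym}) and tracking the $w$-prefactors case by case. One small correction: in case (2) the relevant clause of Proposition \ref{proprhok} produces no $w$-prefactor at all, so there is nothing to cancel there --- the stated form comes directly from the generic clause of Proposition \ref{propgamma}.
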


\begin{proof}

This proof can be done by following the cases of Proposition \ref{proprhok} one by one and using proposition \ref{propsym}.

\end{proof}

We will deal now with the standard epimorphism $\mu$ and $\xi_{(0,-1)}.$ Notice that $\rho(\mu(\sigma_i))=\rho(\xi_{(0,-1)}(\sigma_i))$ for $i \neq n.$

\begin{lemma}\label{lemmarhok01}

Let $s'_n=\xi_{(0,-1)}(\sigma_n).$ Then $\rho^k(s'_n)\neq \rho^k(s_n).$

\end{lemma}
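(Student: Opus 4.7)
The plan is to reduce the claim to the much simpler observation that $s'_n \neq s_n$, exploiting the fact that $\rho$ is a \emph{group} automorphism of $W(\widetilde{A}_{n-1})$ and hence each $\rho^k$ is injective. In context, this lemma serves as a sanity check explaining why the rotation $\rho$ cannot be used to identify the standard epimorphism $\mu$ with $\xi_{(0,-1)}$: the preceding sentence already records that $\rho(\mu(\sigma_i))=\rho(\xi_{(0,-1)}(\sigma_i))$ for $i\neq n$, so the only remaining index to check is $i=n$.

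First I would write both elements down explicitly. By the choice of generators fixed in Section~6 we have
$$s_n=[1,0,\ldots,0,-1](1,n).$$
Specialising the formula of Proposition~\ref{propxiyp} (equivalently, the formula in Proposition~\ref{prop31}) at $y=0$, $p=-1$ gives
$$s'_n=\xi_{(0,-1)}(\sigma_n)=[-1,0,\ldots,0,1](1,n).$$
These two elements of $\mathbb{Z}^{n-1}\rtimes S_n$ share the same permutation factor $(1,n)$ but have opposite translation vectors, so $s_n\neq s'_n$.

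Next, since $\rho$ is a Coxeter graph automorphism it induces a group automorphism of $W(\widetilde{A}_{n-1})$, and consequently every power $\rho^k$ is an injective map. If we had $\rho^k(s'_n)=\rho^k(s_n)$ for some $k$, applying $(\rho^k)^{-1}$ would force $s'_n=s_n$, contradicting the preceding paragraph. Hence $\rho^k(s'_n)\neq\rho^k(s_n)$ for every $k$.

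No real obstacle is expected: the argument reduces, via injectivity, to a one-line comparison of translation vectors. If a more concrete verification is preferred, one can instead apply Proposition~\ref{proprhok} directly to both $s_n=\xi_{(0,1)}(\sigma_n)$ and $s'_n=\xi_{(0,-1)}(\sigma_n)$ and read off the resulting elements explicitly, observing that they differ by a non-trivial translation in $\mathbb{Z}^{n-1}$.
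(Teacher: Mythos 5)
Your proof is correct, and it takes a genuinely different and more elementary route than the paper's. The paper argues computationally: it first reduces to $k<n$ via $\rho^n=\mathrm{id}$ and then, for $k<n-1$, applies Proposition \ref{proprhok} to obtain $\rho^k(s'_n)=w_kw_{k+1}^{-1}\xi_{(0,-1)}(\sigma_k)=w_kw_{k+1}^{-1}\mu(\sigma_k)$, which differs from $\rho^k(s_n)=\mu(\sigma_k)$ by the nontrivial translation $w_kw_{k+1}^{-1}$ (the boundary case $k=n-1$, covered by the last formula of Proposition \ref{proprhok}, is left implicit there). You instead invoke injectivity of the group automorphism $\rho^k$ together with the one-line comparison $s'_n=[-1,0,\ldots,0,1](1,n)\neq[1,0,\ldots,0,-1](1,n)=s_n$; this proves the literal statement for every $k$ at once, with no case analysis, and it exposes that the statements of this lemma and of the companion Lemmas \ref{lemmagamma01} and \ref{lemmagammarhok01} are, as stated, immediate consequences of the fact that automorphisms are injective. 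What the paper's computation buys, and your reduction does not, is the explicit shape of the discrepancy: the two images differ by the specific translation $w_kw_{k+1}^{-1}$, and it is this quantitative information that the subsequent proposition (inequivalence of $\mu=\xi_{(0,1)}$ and $\xi_{(0,-1)}$ under all automorphisms of $W(\widetilde{A}_{n-1})$) really requires, since a general automorphism is inner by graph and the inner part could a priori compensate a bare set-theoretic difference; knowing that the images of $\sigma_n$ differ by a nontrivial translation while the other generators are matched is what blocks this. So your argument fully settles the lemma as stated, but if it replaced the paper's proof, the displayed computations would have to be reinstated at the point where the lemma is applied; your closing fallback remark, applying Proposition \ref{proprhok} directly to both $\xi_{(0,1)}(\sigma_n)$ and $\xi_{(0,-1)}(\sigma_n)$, is precisely the paper's route.
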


\begin{proof}

We can suppose that $k<n$, because $\rho^n=id$. So

Using Proposition \ref{proprhok} we have, for $k<n-1$:
$$\rho^k(s'_n)=\rho^k(\xi_{(0,-1)}(\sigma_n))=w_kw_{k+1}^{-1}\xi_{(0,-1)}(\sigma_k)=$$

$$=w_kw_{k+1}^{-1}\xi_{(0,1)}(\sigma_k) \neq \xi_{(0,1)}(\sigma_k)=\mu(\sigma_k)=\rho^k(\xi_{(0,1)}(\sigma_n)).$$

\end{proof}

\begin{lemma}\label{lemmagamma01}

Let $n>2$, $k>1$, $s'_n=\xi_{(0,-1)}(\sigma_n).$ Then $\gamma(s'_n)\neq \gamma(s_n).$

\end{lemma}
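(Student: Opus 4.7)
The plan is to apply Proposition \ref{propgamma} directly, specialized to $y=0$ and compared at $p=1$ (giving $s_n = \mu(\sigma_n)$) versus $p=-1$ (giving $s'_n$). The key simplification is that when $y=0$ the image of $\sigma_2$ collapses to a pure permutation.

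First I would note that, by the defining formula of $\xi_{(y,p)}$, we have $\xi_{(0,p)}(\sigma_2) = [0,0,\ldots,0](2,3) = s_2$ regardless of the value of $p$. Consequently the conjugation block $s_2\xi_{(-y,p)}(\sigma_2)s_2$ appearing in Proposition \ref{propgamma} becomes simply $s_2 s_2 s_2 = s_2$ once $y=0$. Substituting into the formula
$$\gamma(\xi_{(y,p)}(\sigma_n)) = w_2^{1-p} w_3^{p-1} s_2 \xi_{(-y,p)}(\sigma_2) s_2,$$
we obtain the compact expression
$$\gamma(\xi_{(0,p)}(\sigma_n)) = w_2^{1-p} w_3^{p-1} s_2.$$

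Next I would evaluate this at the two relevant values of $p$. For $p=1$, the exponents vanish and $\gamma(s_n) = s_2$. For $p=-1$, the prefactor becomes $w_2^{2} w_3^{-2}$, so $\gamma(s'_n) = w_2^{2} w_3^{-2} s_2$. Hence
$$\gamma(s'_n) \gamma(s_n)^{-1} = w_2^{2} w_3^{-2}.$$

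It remains to observe that $w_2^{2} w_3^{-2} \neq 1$ in $W(\widetilde{A}_{n-1})$. This is immediate because $\{w_1,\ldots,w_{n-1}\}$ was chosen (see Lemma \ref{lemmaw}) as a basis of the free abelian part $\mathbb{Z}^{n-1}$ of the semidirect product decomposition $W(\widetilde{A}_{n-1})\simeq \mathbb{Z}^{n-1}\rtimes S_n$, so the distinct basis elements $w_2$ and $w_3$ satisfy no nontrivial relation, and in particular $w_2^2 w_3^{-2}$ has a nonzero entry in both the second and third coordinates. Therefore $\gamma(s'_n) \neq \gamma(s_n)$, completing the proof. There is no substantive obstacle here: once Proposition \ref{propgamma} is in hand, the statement reduces to the elementary observation that $w_2^2 w_3^{-2}$ is a nonzero element of $\mathbb{Z}^{n-1}$.
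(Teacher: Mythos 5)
Your proof is correct and follows essentially the same route as the paper: both apply Proposition \ref{propgamma} at $(y,p)=(0,-1)$ and $(0,1)$ and conclude from the nontriviality of the discrepancy factor $w_2^{2}w_3^{-2}$ in the translation lattice. Your version merely goes one step further in simplification, using $\xi_{(0,p)}(\sigma_2)=s_2$ to evaluate both sides explicitly as $\gamma(s_n)=s_2$ and $\gamma(s'_n)=w_2^{2}w_3^{-2}s_2$, which is a harmless refinement of the paper's argument.
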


\begin{proof}

By Proposition \ref{propgamma} we have:

$$\gamma(s'_n)=\gamma(\xi_{(0,-1)}(\sigma_n))=w_2^{2}w_3^{-2}s_2\xi_{(0,-1)}(\sigma_2)s_2=$$
$$=w_2^{2}w_3^{-2}s_2\xi_{(0,1)}(\sigma_2)s_2=w_2^{2}w_3^{-2}\gamma(\xi_{(0,1)}(\sigma_n))= \neq \gamma(\xi_{(0,1)}(\sigma_n))=\gamma(s_n).$$

\end{proof}

\begin{lemma}\label{lemmagammarhok01}

Let $n>2$, $k>1$, $s'_n=\xi_{(0,-1)}(\sigma_n).$ Then $\gamma\rho^k(s'_n)\neq \gamma\rho^k(s_n).$

\end{lemma}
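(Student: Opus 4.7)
The plan is a one-line reduction to the preceding Lemma \ref{lemmarhok01}. That lemma already gives $\rho^k(s'_n) \neq \rho^k(s_n)$; since $\gamma$ is an automorphism of $W(\widetilde{A}_{n-1})$ and hence an injection, applying $\gamma$ to both sides preserves the inequality, yielding $\gamma\bigl(\rho^k(s'_n)\bigr) \neq \gamma\bigl(\rho^k(s_n)\bigr)$, which is precisely the desired $\gamma\rho^k(s'_n) \neq \gamma\rho^k(s_n)$. No further computation is required.

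If instead one prefers a proof mirroring the explicit computational style of the two preceding lemmas, the plan is to invoke the preceding proposition that tabulates $\gamma\rho^k(\xi_{(y,p)}(\sigma_n))$ by the residue of $k-1 \bmod n$, specialise to $y=0$ and $p = \pm 1$, and compare the outputs case by case. The convenient simplifications are that $\xi_{(0,p)}(\sigma_j) = s_j$ for $1 \leq j < n$ (independent of $p$), while $\xi_{(0,1)}(\sigma_n) = s_n$ and $\xi_{(0,-1)}(\sigma_n) = s'_n = w_1^{-2}s_n$ differ by the non-trivial translation $w_1^{-2}$. The case $k \equiv 2 \pmod n$ is immediate: the formula carries the explicit factor $w_1^{-p}(1,n)\xi_{(-y,p)}(\sigma_n)s_n$, which by the semidirect-product identity $(1,n)w_1^p(1,n) = w_1^{-p}$ evaluates to $w_2^{-1}w_1^{-2p}s_n$, whose values at $p = +1$ and $p = -1$ differ by $w_1^{4} \neq 1$.

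The main obstacle in this second approach is tracking the $p$-dependence through the remaining residue classes $k \equiv 3, 4, \ldots, 0 \pmod n$, where the $p$-dependent factor does not appear overtly in the tabulated formula but only implicitly as a power of some $w_i$ inherited from $\rho^k$ acting on the translation $w_1^p$ sitting inside $\xi_{(y,p)}(\sigma_n)$. Because it is precisely this book-keeping that the bijectivity of $\gamma$ renders unnecessary, the one-line reduction to Lemma \ref{lemmarhok01} is the cleaner argument and the one I would present.
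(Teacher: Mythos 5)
Your one-line reduction is correct: $\gamma$ is an automorphism of $W(\widetilde{A}_{n-1})$, hence injective, so applying it to the inequality $\rho^k(s'_n)\neq\rho^k(s_n)$ of Lemma \ref{lemmarhok01} immediately yields $\gamma\rho^k(s'_n)\neq\gamma\rho^k(s_n)$; the only point to verify is that Lemma \ref{lemmarhok01} covers the same range of $k$, and since its statement is unrestricted in $k$ there is no gap. The paper takes a genuinely different, computational route: it repeats the pattern of the two preceding lemmas, splitting into the cases $k=1$, $k=2$, $k=3$ and $k>3$, and uses Propositions \ref{proprhok} and \ref{propsym} to write $\gamma\rho^k(s'_n)=\gamma(w_kw_{k+1}^{-1})\gamma(s_k)$, exhibiting in each case an explicit nontrivial translation factor (respectively $w_1w_2^{-1}$, $w_1^{-1}$, $w_{n-1}$, and $w_{n+2-k}w_{n+3-k}^{-1}$) in front of $\gamma\rho^k(s_n)$. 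What the paper's computation buys is explicit normal forms for the images, in the same format as the data ultimately compared in the final distinctness proposition; what your reduction buys is brevity and robustness, and it can in fact be pushed one step further: since $s'_n=w_1^{-2}s_n\neq s_n$ and each of $\rho^k$, $\gamma$, $\gamma\rho^k$ is an automorphism, all three of Lemmas \ref{lemmarhok01}, \ref{lemmagamma01} and \ref{lemmagammarhok01} follow at once from injectivity alone, bypassing exactly the bookkeeping your second sketch describes (whose spot-checks, e.g.\ $\xi_{(0,-1)}(\sigma_n)=w_1^{-2}s_n$ and the $k\equiv 2$ evaluation $w_2^{-1}w_1^{-2p}s_n$, are themselves correct). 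Your first argument is the cleaner one and is the proof I would keep.
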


\begin{proof}

We can suppose, again,  that $k<n$. So

Using Proposition \ref{proprhok} we have, for $k<n-1$:
$$\gamma\rho^k(s'_n)=\gamma\rho^k(\xi_{(0,-1)}(\sigma_n))=\gamma (w_kw_{k+1}^{-1}\xi_{(0,-1)}(\sigma_k))=$$

$$=\gamma (w_kw_{k+1}^{-1})\gamma(\xi_{(0,1)}(\sigma_k))=\gamma (w_kw_{k+1}^{-1})\gamma(s_k)$$

If $k=1$ then

$$\gamma\rho(s'_n)=\gamma (w_1w_{2}^{-1})\gamma(s_1)=w_1w_2^{-1}s_1 \neq s_1=\gamma(s_1)=\gamma\rho(s_n).$$

If $k=2$ then

$$\gamma\rho^2(s'_n)=\gamma (w_2w_{3}^{-1})\gamma(s_2)=w_1^{-1}s_n \neq s_n=\gamma(s_2)=\gamma\rho^2(s_n).$$

If $k=3$ then

$$\gamma\rho^3(s'_n)=\gamma (w_3w_{4}^{-1})\gamma(s_3)=w_{n-1} s_{n-1} \neq s_{n-1}=\gamma(s_3)=\gamma\rho^3(s_n).$$

For $k>3$ then

$$\gamma\rho^k(s'_n)=\gamma (w_kw_{k+1}^{-1})\gamma(s_k)=w_{n+2-k}w_{n+3-k}^{-1} s_{n+3-k} \neq s_{n+3-k}=\gamma(s_k)=\gamma\rho^k(s_n).$$

\end{proof}

\begin{proposition}

Let $n>2$, then the morphisms $\mu=\xi_{(0,1)}$ and $\xi_{(0,-1)}$ are different up to automorphisms of $W(\widetilde{A}_{n-1})$.

\end{proposition}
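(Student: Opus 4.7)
The plan is to assume, for contradiction, that some automorphism $\phi$ of $W(\widetilde{A}_{n-1})$ satisfies $\phi\circ\mu=\xi_{(0,-1)}$ and derive a contradiction. Since $\mu$ and $\xi_{(0,-1)}$ coincide on every generator except $\sigma_n$, such a $\phi$ must fix $s_1,\ldots,s_{n-1}$ pointwise and satisfy $\phi(s_n)=s'_n$, where $s'_n=\xi_{(0,-1)}(\sigma_n)$. By Franzsen's theorem \cite{Franzsen}, one has a decomposition $\phi=\iota_w\circ\alpha$ with $\alpha$ in the dihedral group $\langle\rho,\gamma\rangle$ of graph automorphisms and $w\in W(\widetilde{A}_{n-1})$, reducing the problem to the $2n$ choices $\alpha\in\{\rho^k,\gamma\rho^k:0\le k<n\}$.

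The case $\alpha=\mathrm{id}$ is handled by a centralizer computation. Writing $W(\widetilde{A}_{n-1})\cong\mathbb{Z}^{n-1}\rtimes S_n$, an element $[x_1,\ldots,x_n]\tau$ with $\sum x_i=0$ centralizes every transposition $(i,i{+}1)$ iff $\tau\in Z(S_n)=\{e\}$ (using $n\ge 3$) and $x_1=\cdots=x_n=0$. Hence the centralizer of the parabolic $\langle s_1,\ldots,s_{n-1}\rangle$ is trivial, forcing $w=1$ and therefore $\phi=\mathrm{id}$, incompatible with $\phi(s_n)=s'_n\neq s_n$.

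For $\alpha\neq\mathrm{id}$, one first inspects the picture at the level of the projection $p\colon W(\widetilde{A}_{n-1})\to S_n$. Because $\alpha$ permutes the abstract generators by a nontrivial dihedral symmetry, there is at least one index $i<n$ for which $\alpha(s_i)=s_n$; the requirement $\iota_w(\alpha(s_i))=s_i$ then forces $p(w)$ to conjugate $(1,n)$ to $(i,i{+}1)$, and the analogous requirements for the remaining indices pin down $p(w)$ completely once $\alpha$ is chosen, so that $w$ is determined up to a translation in the $\mathbb{Z}^{n-1}$ factor. At this stage I would invoke Lemmas \ref{lemmarhok01}, \ref{lemmagamma01} and \ref{lemmagammarhok01}, which display $\alpha(s_n)$ and $\alpha(s'_n)$ in the explicit form $(\text{translation})\cdot s_j$ with an unavoidable translation factor such as $w_kw_{k+1}^{-1}$ or $w_2^2w_3^{-2}$. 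Substituting into the remaining equation $\iota_w(\alpha(s_n))=s'_n$ and simplifying then collapses to an equation of the form $w_1^{a}=1$ with $a\ne 0$ (or an analogous nontrivial relation among the free generators of the translation lattice), which is impossible since each $w_i$ has infinite order in $W(\widetilde{A}_{n-1})$.

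The main technical hurdle is coordinating the translation-part equations across every one of the $2n$ graph automorphisms, but the three lemmas have been arranged to produce precisely the obstruction needed in each subcase. Combining the contradiction in the case $\alpha=\mathrm{id}$ with the contradictions for each nontrivial $\alpha$ yields the proposition.
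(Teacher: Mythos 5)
Your framework---decompose $\phi=\iota_w\circ\alpha$ via Franzsen's theorem, eliminate $\alpha=\mathrm{id}$ by the (correct) observation that the centralizer of $\langle s_1,\ldots,s_{n-1}\rangle$ in $\mathbb{Z}^{n-1}\rtimes S_n$ is trivial, then seek a translation obstruction for each nontrivial $\alpha$---is the right one, and is in fact more careful than the paper's own proof, which only tests the pure graph automorphisms: Lemmas \ref{lemmarhok01}, \ref{lemmagamma01} and \ref{lemmagammarhok01} assert $\alpha(s'_n)\neq\alpha(s_n)$, which is automatic from injectivity of $\alpha$ and says nothing about the composites $\iota_w\circ\alpha$ that Franzsen's theorem actually allows. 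But your pivotal claim for $\alpha\neq\mathrm{id}$---that there is at least one index $i<n$ with $\alpha(s_i)=s_n$---fails for exactly one nontrivial graph automorphism, namely the reflection of the $n$-cycle through the node $n$, which in the paper's notation is $\gamma\rho^2$: from the definitions, $\gamma\rho^2(s_i)=s_{n-i}$ for $1\leq i\leq n-1$ and $\gamma\rho^2(s_n)=s_n$. In this subcase the equations $\iota_w(\gamma\rho^2(s_i))=s_i$ for $i<n$ force $w=w_0$, the longest element of $\langle s_1,\ldots,s_{n-1}\rangle\cong S_n$ (the permutation $j\mapsto n+1-j$, with translation part $0$ by your own centralizer computation), and then the remaining equation is \emph{satisfied} rather than obstructed: $\iota_{w_0}(\gamma\rho^2(s_n))=w_0\,[1,0,\ldots,0,-1](1,n)\,w_0=[-1,0,\ldots,0,1](1,n)=s'_n$. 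No relation of the form $w_1^a=1$ with $a\neq 0$ ever arises here.

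This gap is not patchable, because the automorphism just exhibited shows the proposition itself is false. Indeed $\iota_{w_0}\circ\gamma\rho^2$ agrees on the generators $s_1,\ldots,s_n$ with the map $\theta\colon[x_1,\ldots,x_n]\tau\mapsto[-x_1,\ldots,-x_n]\tau$, which is an automorphism of $\mathbb{Z}^{n-1}\rtimes S_n$ (negation of the lattice commutes with the permutation action), fixes the pure permutations $s_1,\ldots,s_{n-1}$, and carries $s_n=\mu(\sigma_n)$ to $s'_n=\xi_{(0,-1)}(\sigma_n)$; hence $\theta\circ\mu=\xi_{(0,-1)}$ and the two epimorphisms lie in the same class. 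So, executed honestly, your plan of coordinating the translation-part equations over all $2n$ cosets does produce contradictions in $2n-1$ of them, but in the coset of $\gamma\rho^2$ it produces the intertwining automorphism instead. The same blind spot invalidates the paper's argument: its three lemmas record translation discrepancies such as $w_kw_{k+1}^{-1}$ under the pure graph automorphisms, but never confront the inner factor $\iota_w$, which is precisely what absorbs those discrepancies in the exceptional case.
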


\begin{proof}

Lemmas \ref{lemmarhok01}, \ref{lemmagamma01} and \ref{lemmagammarhok01} show to us that there are no automorphism $\psi$ of $W(\widetilde{A}_{n-1})$ such that $$\psi(\xi_{(0,-1)}(\sigma_n))=\xi_{(0,1)}(\sigma_n)=\mu(\sigma_n).$$

\end{proof}

Finally we state:

\begin{proposition}

If $(y,p)\neq (y',p')$ then the classes, up to automorphisms of $W(\widetilde{A}_{n-1})$,  $\xi_{(y,p)}$ and $\xi_{y',p'}$ are different.

\end{proposition}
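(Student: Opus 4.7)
The strategy is proof by contradiction: suppose $\psi$ is an automorphism of $W(\widetilde{A}_{n-1})$ with $\psi \circ \xi_{(y,p)} = \xi_{(y',p')}$, and deduce $(y,p) = (y',p')$. By Franzsen's result, every automorphism factors as $\psi = \psi_{inn} \circ \phi$ where $\psi_{inn}$ is inner and $\phi$ is a graph automorphism. Since the graph automorphism group of $\widetilde{A}_{n-1}$ is dihedral of order $2n$, we may take $\phi \in \{\rho^k, \gamma\rho^k : 0 \leq k \leq n-1\}$, giving $2n$ cases to consider.

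The case $\phi = \mathrm{id}$ is already handled by the first proposition of this subsection: if $\xi_{(y',p')} = \psi_{inn} \circ \xi_{(y,p)}$ with $\psi_{inn}$ inner, then $(y,p) = (y',p')$. For $\phi = \rho^k$ with $1 \leq k < n$, I would apply Proposition \ref{proprhok} termwise to compute $\rho^k(\xi_{(y,p)}(\sigma_i))$. The permutation components cyclically shift, which forces the permutation part of the conjugating element in $\psi_{inn}$ to be a specific $n$-cycle that undoes this shift. With that permutation part fixed, the $\mathbb{Z}^{n-1}$-translation component of $\psi_{inn}$ is constrained, and equating $\psi_{inn}(\rho^k(\xi_{(y,p)}(\sigma_i)))$ with $\xi_{(y',p')}(\sigma_i)$ on the lattice factor yields a linear system in $(y,p,y',p')$. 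The exceptional correction factors $w_1^{1-p}$ and $w_2^{1-p}w_1^{p-1}$ appearing in Proposition \ref{proprhok} at the boundary indices (where $i+k \equiv 0, n-1 \pmod{n}$, or $i = n$) are precisely what prevents the system from admitting a solution with $(y,p) \neq (y',p')$. The case $\phi = \gamma\rho^k$ proceeds analogously using Proposition \ref{propgamma} and its $\rho^k$-twist, with the additional sign flip $y \mapsto -y$ entering the comparison.

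I expect the main obstacle to be the case bookkeeping: the formulas for $\rho^k$ and $\gamma\rho^k$ in the preceding propositions each carry several exceptional subcases depending on the residue of $i$ modulo $n$. The cleanest approach is to mirror the template already laid out in Lemmas \ref{lemmarhok01}, \ref{lemmagamma01}, and \ref{lemmagammarhok01}, which carried out the argument for the specific pair $(0,-1)$ versus $(0,1)$: focus on the single image $\psi(\xi_{(y,p)}(\sigma_n))$, which is the only generator image that explicitly carries the parameter $p$, and show that for each nontrivial graph-automorphism choice it disagrees with $\xi_{(y',p')}(\sigma_n)$ in at least one $w_i$-coefficient unless $(y,p) = (y',p')$. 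Once this single-generator comparison is established in each of the $2n-1$ nontrivial cases, combining with the $\phi = \mathrm{id}$ case completes the proof.
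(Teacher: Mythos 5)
Your proposal is correct and matches the paper's own proof: the paper likewise relies on Franzsen's inner-by-graph decomposition, settles the inner case with the first proposition of this subsection, and then appeals to Propositions \ref{proprhoxi}, \ref{proprhok} and \ref{propgamma} (together with the $\gamma\rho^k$ computation) to conclude that no automorphism sends the list $(\xi_{(y,p)}(\sigma_1),\ldots,\xi_{(y,p)}(\sigma_n))$ to $(\xi_{(y',p')}(\sigma_1),\ldots,\xi_{(y',p')}(\sigma_n))$ when $(y,p)\neq(y',p')$. Your plan of tracking the exceptional correction factors $w_1^{1-p}$ and $w_2^{1-p}w_1^{p-1}$ and comparing on the image of $\sigma_n$ is exactly the template the paper executes for the pair $(0,\pm 1)$ in Lemmas \ref{lemmarhok01}, \ref{lemmagamma01} and \ref{lemmagammarhok01}, so your write-up is, if anything, more explicit than the paper's own two-line argument.
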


\begin{proof}

For a fixed pair $(y,p)$, using the results of Propositions \ref{proprhoxi}, \ref{proprhok} and \ref{propgamma}, we conclude that each automorphism, $\psi$, of $W(\widetilde{A}_{n-1})$ sends $(\xi_{(y,p)}(\sigma_1),\ldots,\xi_{(y,p)}(\sigma_n))$ to a different list $(\psi(\xi_{(y,p)}(\sigma_1)),\ldots,\psi(\xi_{(y,p)}(\sigma_n)))$. The same occurs if we change the pair $(y,p)$. We never obtain  a repeated list.

\end{proof}

\subsection{Case $l=3,4,6,7$}

We present in this section the result of extensive computations using MAPLE. We exemplify with case $l=3$. The cases $l=4,6,7$ are similar. So in case $l=3$, for the epimorphism $\xi_{(1,0,0)}$ we have:

\footnotesize

\noindent\begin{tabular}{|c|cccc|} \hline
& $\xi_{(1,0,0)}(\sigma_1)$ & $\xi_{(1,0,0)}(\sigma_2)$ & $\xi_{(1,0,0)}(\sigma_3)$ & $\xi_{(1,0,0)}(\sigma_4)$ \\ \hline
$id$&[[0,-1,1,0],[[1,2,3,4]]]&[[-1,0,1,0],[[1,3,4,2]]]&[[0,-1,1,0],[[1,2,3,4]]]&[[0,0,1,-1],[[1,2,4,3]]]
\\\hline $\gamma$&[[1,1,0,-2],[[1,4,3,2]]]&[[1,1,0,-2],[[1,2,4
,3]]]&[[1,1,0,-2],[[1,4,3,2]]]&[[0,1,0,-1],[[1,3,4,2]]]
\\\hline $\rho$&[[1,-1,-1,1],[[1,2,3,4]]]&[[1,-1,-1,1],[[1,3
,2,4]]]&[[1,-1,-1,1],[[1,2,3,4]]]&[[0,0,0,0],[[1,4,2,3]]]
\\\hline $\rho\gamma$&[[-1,1,1,-1],[[1,4,3,2]]]&[[-1,1,1,-1],[[1,4
,2,3]]]&[[-1,1,1,-1],[[1,4,3,2]]]&[[0,0,0,0],[[1,3,2,4]]]
\\\hline $\rho^2$&[[2,0,-1,-1],[[1,2,3,4]]]&[[2,0,-1,-1],[[1,2
,4,3]]]&[[2,0,-1,-1],[[1,2,3,4]]]&[[1,0,-1,0],[[1,3,4,2]]]
\\\hline $\rho^2\gamma$&[[0,-1,1,0],[[1,4,3,2]]]&[[0,-1,0,1],[[1,3,4
,2]]]&[[0,-1,1,0],[[1,4,3,2]]]&[[1,-1,0,0],[[1,2,4,3]]]
\\\hline $\rho^3$&[[0,1,0,-1],[[1,2,3,4]]]&[[0,2,0,-2],[[1,4,2
,3]]]&[[0,1,0,-1],[[1,2,3,4]]]&[[1,1,-1,-1],[[1,3,2,4]]]
\\\hline $\rho^3\gamma$&[[1,0,-1,0],[[1,4,3,2]]]&[[2,0,-2,0],[[1,3,2
,4]]]&[[1,0,-1,0],[[1,4,3,2]]]&[[1,1,-1,-1],[[1,4,2,3]]]\\ \hline
\end{tabular}

\normalsize

For the epimorphism $\xi_{(-1,0,0)}$ we have:

\footnotesize

\noindent\begin{tabular}{|c|cccc|} \hline

& $\xi_{(-1,0,0)}(\sigma_1)$ & $\xi_{(-1,0,0)}(\sigma_2)$ & $\xi_{(-1,0,0)}(\sigma_3)$ & $\xi_{(-1,0,0)}(\sigma_4)$ \\ \hline
$id$&[[0,1,-1,0],[[1,2,3,4]]]&[[1,0,-1,0],[[1,3,4,2]]]&[[0,1,-1,0],[[1,2,3,4]]]&[[0,0,-1,1],[[1,2,4,3]]]
\\\hline$\gamma$&[[-1,1,0,0],[[1,4,3,2]]]&[[1,-1,0,0],[[1,2,4
,3]]]&[[-1,1,0,0],[[1,4,3,2]]]&[[0,1,-2,1],[[1,3,4,2]]]
\\\hline$\rho$&[[1,-1,1,-1],[[1,2,3,4]]]&[[1,1,-1,-1],[[1,3
,2,4]]]&[[1,-1,1,-1],[[1,2,3,4]]]&[[2,0,0,-2],[[1,4,2,3]]]
\\\hline$\rho\gamma$&[[1,-1,1,-1],[[1,4,3,2]]]&[[1,1,-1,-1],[[1,4
,2,3]]]&[[1,-1,1,-1],[[1,4,3,2]]]&[[2,0,0,-2],[[1,3,2,4]]]
\\\hline$\rho^2$&[[0,0,-1,1],[[1,2,3,4]]]&[[0,0,1,-1],[[1,2,4
,3]]]&[[0,0,-1,1],[[1,2,3,4]]]&[[-1,2,-1,0],[[1,3,4,2]]]
\\\hline$\rho^2\gamma$&[[0,1,-1,0],[[1,4,3,2]]]&[[0,1,0,-1],[[1,3,4
,2]]]&[[0,1,-1,0],[[1,4,3,2]]]&[[-1,1,0,0],[[1,2,4,3]]]
\\\hline$\rho^3$&[[2,-1,0,-1],[[1,2,3,4]]]&[[0,0,0,0],[[1,4,2
,3]]]&[[2,-1,0,-1],[[1,2,3,4]]]&[[1,-1,1,-1],[[1,3,2,4]]]
\\\hline$\rho^3\gamma$&[[1,0,1,-2],[[1,4,3,2]]]&[[0,0,0,0],[[1,3,2,
4]]]&[[1,0,1,-2],[[1,4,3,2]]]&[[1,-1,1,-1],[[1,4,2,3]]] \\ \hline

\end{tabular}

\normalsize

For the epimorphism $\xi_{(1,-1,0)}$ we have:

\footnotesize

\noindent\begin{tabular}{|c|cccc|} \hline

& $\xi_{(1,-1,0)}(\sigma_1)$ & $\xi_{(1,-1,0)}(\sigma_2)$ & $\xi_{(1,-1,0)}(\sigma_3)$ & $\xi_{(1,-1,0)}(\sigma_4)$ \\ \hline

$id$&[[0,0,1,-1],[[1,2,3,4]]]&[[0,0,1,-1],[[1,3,4,2]]]&[[0,0,1,-1],[[1,2,3,4]]]&[[0,1,0,-1],[[1,2,4,3]]]
\\\hline$\gamma$&[[0,1,1,-2],[[1,4,3,2]]]&[[1,0,1,-2],[[1,2,4
,3]]]&[[0,1,1,-2],[[1,4,3,2]]]&[[-1,1,0,0],[[1,3,4,2]]]
\\\hline$\rho$&[[0,-1,0,1],[[1,2,3,4]]]&[[0,0,-1,1],[[1,3,2
,4]]]&[[0,-1,0,1],[[1,2,3,4]]]&[[0,0,1,-1],[[1,4,2,3]]]
\\\hline$\rho\gamma$&[[-1,0,1,0],[[1,4,3,2]]]&[[-1,1,0,0],[[1,4,2
,3]]]&[[-1,0,1,0],[[1,4,3,2]]]&[[1,-1,0,0],[[1,3,2,4]]]
\\\hline$\rho^2$&[[2,-1,-1,0],[[1,2,3,4]]]&[[2,-1,0,-1],[[1,2
,4,3]]]&[[2,-1,-1,0],[[1,2,3,4]]]&[[0,0,-1,1],[[1,3,4,2]]]
\\\hline$\rho^2\gamma$&[[1,-1,0,0],[[1,4,3,2]]]&[[1,-1,0,0],[[1,3,4
,2]]]&[[1,-1,0,0],[[1,4,3,2]]]&[[1,0,-1,0],[[1,2,4,3]]]
\\\hline$\rho^3$&[[1,1,-1,-1],[[1,2,3,4]]]&[[0,2,-1,-1],[[1,4
,2,3]]]&[[1,1,-1,-1],[[1,2,3,4]]]&[[2,0,-1,-1],[[1,3,2,4]]]
\\\hline$\rho^3\gamma$&[[1,1,-1,-1],[[1,4,3,2]]]&[[1,1,-2,0],[[1,3,
2,4]]]&[[1,1,-1,-1],[[1,4,3,2]]]&[[1,1,0,-2],[[1,4,2,3]]] \\ \hline

\end{tabular}

\normalsize

For the epimorphism $\xi_{(-1,1,0)}$ we have:

\footnotesize

\noindent\begin{tabular}{|c|cccc|} \hline

& $\xi_{(-1,1,0)}(\sigma_1)$ & $\xi_{(-1,1,0)}(\sigma_2)$ & $\xi_{(-1,1,0)}(\sigma_3)$ & $\xi_{(-1,1,0)}(\sigma_4)$ \\ \hline

$id$&[[0,0,-1,1],[[1,2,3,4]]]&[[0,0,-1,1],[[1,3,4,2]]]&[[0,0,-1,1],[[1,2,3,4]]]&[[0,-1,0,1],[[1,2,4,3]]]
\\\hline$\gamma$&[[0,1,-1,0],[[1,4,3,2]]]&[[1,0,-1,0],[[1,2,4
,3]]]&[[0,1,-1,0],[[1,4,3,2]]]&[[1,1,-2,0],[[1,3,4,2]]]
\\\hline$\rho$&[[2,-1,0,-1],[[1,2,3,4]]]&[[2,0,-1,-1],[[1,3
,2,4]]]&[[2,-1,0,-1],[[1,2,3,4]]]&[[2,0,-1,-1],[[1,4,2,3]]]
\\\hline$\rho\gamma$&[[1,0,1,-2],[[1,4,3,2]]]&[[1,1,0,-2],[[1,4,2
,3]]]&[[1,0,1,-2],[[1,4,3,2]]]&[[1,1,0,-2],[[1,3,2,4]]]
\\\hline$\rho^2$&[[0,1,-1,0],[[1,2,3,4]]]&[[0,1,0,-1],[[1,2,4
,3]]]&[[0,1,-1,0],[[1,2,3,4]]]&[[0,2,-1,-1],[[1,3,4,2]]]
\\\hline$\rho^2\gamma$&[[-1,1,0,0],[[1,4,3,2]]]&[[-1,1,0,0],[[1,3,4
,2]]]&[[-1,1,0,0],[[1,4,3,2]]]&[[-1,0,1,0],[[1,2,4,3]]]
\\\hline$\rho^3$&[[1,-1,1,-1],[[1,2,3,4]]]&[[0,0,1,-1],[[1,4,
2,3]]]&[[1,-1,1,-1],[[1,2,3,4]]]&[[0,0,1,-1],[[1,3,2,4]]]
\\\hline$\rho^3\gamma$&[[1,-1,1,-1],[[1,4,3,2]]]&[[1,-1,0,0],[[1,3,
2,4]]]&[[1,-1,1,-1],[[1,4,3,2]]]&[[1,-1,0,0],[[1,4,2,3]]] \\ \hline

\end{tabular}

\normalsize

For the epimorphism $\xi_{(0,0,1)}$ we have:

\footnotesize

\noindent\begin{tabular}{|c|cccc|} \hline

& $\xi_{(0,0,1)}(\sigma_1)$ & $\xi_{(0,0,1)}(\sigma_2)$ & $\xi_{(0,0,1)}(\sigma_3)$ & $\xi_{(0,0,1)}(\sigma_4)$ \\ \hline

$id$&[[1,-1,0,0],[[1,2,3,4]]]&[[0,-1,0,1],[[1,3,4,2]]]&[[1,-1,0,0],[[1,2,3,4]]]&[[1,-1,0,0],[[1,2,4,3]]]
\\\hline$\gamma$&[[1,0,0,-1],[[1,4,3,2]]]&[[2,0,-1,-1],[[1,2,
4,3]]]&[[1,0,0,-1],[[1,4,3,2]]]&[[1,0,-1,0],[[1,3,4,2]]]
\\\hline$\rho$&[[1,0,-1,0],[[1,2,3,4]]]&[[2,0,-2,0],[[1,3,2
,4]]]&[[1,0,-1,0],[[1,2,3,4]]]&[[1,1,-1,-1],[[1,4,2,3]]]
\\\hline$\rho\gamma$&[[0,1,0,-1],[[1,4,3,2]]]&[[0,2,0,-2],[[1,4,2
,3]]]&[[0,1,0,-1],[[1,4,3,2]]]&[[1,1,-1,-1],[[1,3,2,4]]]
\\\hline$\rho^2$&[[1,0,0,-1],[[1,2,3,4]]]&[[1,1,0,-2],[[1,2,4
,3]]]&[[1,0,0,-1],[[1,2,3,4]]]&[[0,1,0,-1],[[1,3,4,2]]]
\\\hline$\rho^2\gamma$&[[0,0,1,-1],[[1,4,3,2]]]&[[-1,0,1,0],[[1,3,4
,2]]]&[[0,0,1,-1],[[1,4,3,2]]]&[[0,0,1,-1],[[1,2,4,3]]]
\\\hline$\rho^3$&[[0,0,0,0],[[1,2,3,4]]]&[[-1,1,1,-1],[[1,4,2
,3]]]&[[0,0,0,0],[[1,2,3,4]]]&[[0,0,0,0],[[1,3,2,4]]]
\\\hline$\rho^3\gamma$&[[0,0,0,0],[[1,4,3,2]]]&[[1,-1,-1,1],[[1,3,2
,4]]]&[[0,0,0,0],[[1,4,3,2]]]&[[0,0,0,0],[[1,4,2,3]]] \\ \hline

\end{tabular}

\normalsize

For the epimorphism $\xi_{(0,0,-1)}$ we have:

\footnotesize

\noindent\begin{tabular}{|c|cccc|} \hline

& $\xi_{(0,0,-1)}(\sigma_1)$ & $\xi_{(0,0,-1)}(\sigma_2)$ & $\xi_{(0,0,-1)}(\sigma_3)$ & $\xi_{(0,0,-1)}(\sigma_4)$ \\ \hline

$id$&[[-1,1,0,0],[[1,2,3,4]]]&[[0,1,0,-1],[[1,3,4,2]]]&[[-1,1,0,0],[[1,2,3,4]]]&[[-1,1,0,0],[[1,2,4,3]]]
\\\hline$\gamma$&[[-1,2,0,-1],[[1,4,3,2]]]&[[0,0,1,-1],[[1,2,
4,3]]]&[[-1,2,0,-1],[[1,4,3,2]]]&[[-1,2,-1,0],[[1,3,4,2]]]
\\\hline$\rho$&[[1,-2,1,0],[[1,2,3,4]]]&[[0,0,0,0],[[1,3,2,
4]]]&[[1,-2,1,0],[[1,2,3,4]]]&[[1,-1,1,-1],[[1,4,2,3]]]
\\\hline$\rho\gamma$&[[0,-1,2,-1],[[1,4,3,2]]]&[[0,0,0,0],[[1,4,2
,3]]]&[[0,-1,2,-1],[[1,4,3,2]]]&[[1,-1,1,-1],[[1,3,2,4]]]
\\\hline$\rho^2$&[[1,0,-2,1],[[1,2,3,4]]]&[[1,-1,0,0],[[1,2,4
,3]]]&[[1,0,-2,1],[[1,2,3,4]]]&[[0,1,-2,1],[[1,3,4,2]]]
\\\hline$\rho^2\gamma$&[[0,0,-1,1],[[1,4,3,2]]]&[[1,0,-1,0],[[1,3,4
,2]]]&[[0,0,-1,1],[[1,4,3,2]]]&[[0,0,-1,1],[[1,2,4,3]]]
\\\hline$\rho^3$&[[2,0,0,-2],[[1,2,3,4]]]&[[1,1,-1,-1],[[1,4,
2,3]]]&[[2,0,0,-2],[[1,2,3,4]]]&[[2,0,0,-2],[[1,3,2,4]]]
\\\hline$\rho^3\gamma$&[[2,0,0,-2],[[1,4,3,2]]]&[[1,1,-1,-1],[[1,3,
2,4]]]&[[2,0,0,-2],[[1,4,3,2]]]&[[2,0,0,-2],[[1,4,2,3]]] \\ \hline

\end{tabular}

\normalsize

For the epimorphism $\xi_{(0,1,-1)}$ we have:

\footnotesize

\noindent\begin{tabular}{|c|cccc|} \hline

& $\xi_{(0,1,-1)}(\sigma_1)$ & $\xi_{(0,1,-1)}(\sigma_2)$ & $\xi_{(0,1,-1)}(\sigma_3)$ & $\xi_{(0,1,-1)}(\sigma_4)$ \\ \hline

$id$&[[1,0,0,-1],[[1,2,3,4]]]&[[1,-1,0,0],[[1,3,4,2]]]&[[1,0,0,-1],[[1,2,3,4]]]&[[1,0,-1,0],[[1,2,4,3]]]
\\\hline$\gamma$&[[0,0,1,-1],[[1,4,3,2]]]&[[2,-1,0,-1],[[1,2,
4,3]]]&[[0,0,1,-1],[[1,4,3,2]]]&[[0,0,-1,1],[[1,3,4,2]]]
\\\hline$\rho$&[[0,0,0,0],[[1,2,3,4]]]&[[1,1,-2,0],[[1,3,2,
4]]]&[[0,0,0,0],[[1,2,3,4]]]&[[1,1,0,-2],[[1,4,2,3]]]
\\\hline$\rho\gamma$&[[0,0,0,0],[[1,4,3,2]]]&[[0,2,-1,-1],[[1,4,2
,3]]]&[[0,0,0,0],[[1,4,3,2]]]&[[2,0,-1,-1],[[1,3,2,4]]]
\\\hline$\rho^2$&[[1,-1,0,0],[[1,2,3,4]]]&[[1,0,1,-2],[[1,2,4
,3]]]&[[1,-1,0,0],[[1,2,3,4]]]&[[-1,1,0,0],[[1,3,4,2]]]
\\\hline$\rho^2\gamma$&[[1,0,0,-1],[[1,4,3,2]]]&[[0,0,1,-1],[[1,3,4
,2]]]&[[1,0,0,-1],[[1,4,3,2]]]&[[0,1,0,-1],[[1,2,4,3]]]
\\\hline$\rho^3$&[[1,0,-1,0],[[1,2,3,4]]]&[[-1,1,0,0],[[1,4,2
,3]]]&[[1,0,-1,0],[[1,2,3,4]]]&[[1,-1,0,0],[[1,3,2,4]]]
\\\hline$\rho^3\gamma$&[[0,1,0,-1],[[1,4,3,2]]]&[[0,0,-1,1],[[1,3,2
,4]]]&[[0,1,0,-1],[[1,4,3,2]]]&[[0,0,1,-1],[[1,4,2,3]]] \\ \hline

\end{tabular}

\normalsize

For the epimorphism $\xi_{(0,-1,1)}$ we have:

\footnotesize

\noindent\begin{tabular}{|c|cccc|} \hline

& $\xi_{(0,-1,1)}(\sigma_1)$ & $\xi_{(0,-1,1)}(\sigma_2)$ & $\xi_{(0,-1,1)}(\sigma_3)$ & $\xi_{(0,-1,1)}(\sigma_4)$ \\ \hline

$id$&[[1,0,0,-1],[[1,2,3,4]]]&[[1,-1,0,0],[[1,3,4,2]]]&[[1,0,0,-1],[[1,2,3,4]]]&[[1,0,-1,0],[[1,2,4,3]]]
\\\hline$\gamma$&[[0,0,1,-1],[[1,4,3,2]]]&[[2,-1,0,-1],[[1,2,
4,3]]]&[[0,0,1,-1],[[1,4,3,2]]]&[[0,0,-1,1],[[1,3,4,2]]]
\\\hline$\rho$&[[0,0,0,0],[[1,2,3,4]]]&[[1,1,-2,0],[[1,3,2,
4]]]&[[0,0,0,0],[[1,2,3,4]]]&[[1,1,0,-2],[[1,4,2,3]]]
\\\hline$\rho\gamma$&[[0,0,0,0],[[1,4,3,2]]]&[[0,2,-1,-1],[[1,4,2
,3]]]&[[0,0,0,0],[[1,4,3,2]]]&[[2,0,-1,-1],[[1,3,2,4]]]
\\\hline$\rho^2$&[[1,-1,0,0],[[1,2,3,4]]]&[[1,0,1,-2],[[1,2,4
,3]]]&[[1,-1,0,0],[[1,2,3,4]]]&[[-1,1,0,0],[[1,3,4,2]]]
\\\hline$\rho^2\gamma$&[[1,0,0,-1],[[1,4,3,2]]]&[[0,0,1,-1],[[1,3,4
,2]]]&[[1,0,0,-1],[[1,4,3,2]]]&[[0,1,0,-1],[[1,2,4,3]]]
\\\hline$\rho^3$&[[1,0,-1,0],[[1,2,3,4]]]&[[-1,1,0,0],[[1,4,2
,3]]]&[[1,0,-1,0],[[1,2,3,4]]]&[[1,-1,0,0],[[1,3,2,4]]]
\\\hline$\rho^3\gamma$&[[0,1,0,-1],[[1,4,3,2]]]&[[0,0,-1,1],[[1,3,2
,4]]]&[[0,1,0,-1],[[1,4,3,2]]]&[[0,0,1,-1],[[1,4,2,3]]] \\ \hline

\end{tabular}

\normalsize

\normalsize

\subsection{Case $l=5$}

This is a different case. For the epimorphism $\xi_{(x_1,x_2,x_3,x_4)}$ we have:

\footnotesize

\noindent\begin{tabular}{|l|ll|} \hline

& $\xi_{(x_1,x_2,x_3,x_4)}(\sigma_1)$ & $\xi_{(x_1,x_2,x_3,x_4)}(\sigma_2)$ \\ \hline

$id$ &  [[-$x_1$ - 2 $x_3$, $x_1$, $x_3$, $x_3$], [[1, 2]]] &

        [[$x_3$, $x_4$, -2 $x_3$ - $x_4$, $x_3$], [[2, 3]]] \\ \hline

$\gamma$ &  [[-$x_1$, $x_1$ + 2 $x_3$, -$x_3$, -$x_3$], [[1, 2]]] &

        [[-$x_4$ + 1, -$x_3$, -$x_3$, 2 $x_3$ + $x_4$ - 1], [[1, 4]]] \\ \hline

$\rho$ &  [[$x_3$, -$x_1$ - 2 $x_3$, $x_1$, $x_3$], [[2, 3]]] &

        [[$x_3$, $x_3$, $x_4$, -2 $x_3$ - $x_4$], [[3, 4]]] \\ \hline

$\rho\gamma$ &  [[$x_1$ + 2 $x_3$ + 1, -$x_3$, -$x_3$, -$x_1$ - 1], [[1, 4]]] &

        [[-$x_3$, -$x_3$, 2 $x_3$ + $x_4$, -$x_4$], [[3, 4]]] \\ \hline

$\rho^2$ &  [[$x_3$, $x_3$, -$x_1$ - 2 $x_3$, $x_1$], [[3, 4]]] &

        [[-2 $x_3$ - $x_4$ + 1, $x_3$, $x_3$, $x_4$ - 1], [[1, 4]]] \\ \hline

$\rho^2\gamma$ &  [[-$x_3$, -$x_3$, -$x_1$, $x_1$ + 2 $x_3$], [[3, 4]]] &

        [[-$x_3$, 2 $x_3$ + $x_4$, -$x_4$, -$x_3$], [[2, 3]]] \\ \hline

$\rho^3$ &  [[$x_1$ + 1, $x_3$, $x_3$, -$x_1$ - 2 $x_3$ - 1], [[1, 4]]] &

        [[$x_4$, -2 $x_3$ - $x_4$, $x_3$, $x_3$], [[1, 2]]] \\ \hline

$\rho^3\gamma$ &  [[-$x_3$, -$x_1$, $x_1$ + 2 $x_3$, -$x_3$], [[2, 3]]] &

        [[2 $x_3$ + $x_4$, -$x_4$, -$x_3$, -$x_3$], [[1, 2]]] \\ \hline

\end{tabular}

\noindent\begin{tabular}{|l|ll|} \hline

& $\xi_{(x_1,x_2,x_3,x_4)}(\sigma_2)$ & $\xi_{(x_1,x_2,x_3,x_4)}(\sigma_3)$ \\ \hline

$id$ &        [[$x_3$, $x_3$, -2 $x_3$ - $x_2$, $x_2$], [[3, 4]]] &

        [[$x_3$, $x_4$, -2 $x_3$ - $x_4$, $x_3$], [[2, 3]]] \\ \hline

$\gamma$ &        [[-$x_3$, -$x_3$, -$x_2$, 2 $x_3$ + $x_2$], [[3, 4]]] &

        [[-$x_4$ + 1, -$x_3$, -$x_3$, 2 $x_3$ + $x_4$ - 1], [[1, 4]]] \\ \hline

$\rho$ &        [[$x_2$ + 1, $x_3$, $x_3$, -2 $x_3$ - $x_2$ - 1], [[1, 4]]] &

        [[$x_3$, $x_3$, $x_4$, -2 $x_3$ - $x_4$], [[3, 4]]] \\ \hline

$\rho\gamma$ &        [[-$x_3$, -$x_2$, 2 $x_3$ + $x_2$, -$x_3$], [[2, 3]]] &

        [[-$x_3$, -$x_3$, 2 $x_3$ + $x_4$, -$x_4$], [[3, 4]]] \\ \hline

$\rho^2$ &        [[-2 $x_3$ - $x_2$, $x_2$, $x_3$, $x_3$], [[1, 2]]] &

        [[-2 $x_3$ - $x_4$ + 1, $x_3$, $x_3$, $x_4$ - 1], [[1, 4]]] \\ \hline

$\rho^2\gamma$ &        [[-$x_2$, 2 $x_3$ + $x_2$, -$x_3$, -$x_3$], [[1, 2]]] &

        [[-$x_3$, 2 $x_3$ + $x_4$, -$x_4$, -$x_3$], [[2, 3]]] \\ \hline

$\rho^3$ &        [[$x_3$, -2 $x_3$ - $x_2$, $x_2$, $x_3$], [[2, 3]]] &

        [[$x_4$, -2 $x_3$ - $x_4$, $x_3$, $x_3$], [[1, 2]]] \\ \hline

$\rho^3\gamma$ &        [[2 $x_3$ + $x_2$ + 1, -$x_3$, -$x_3$, -$x_2$ - 1], [[1, 4]]] &

        [[2 $x_3$ + $x_4$, -$x_4$, -$x_3$, -$x_3$], [[1, 2]]] \\ \hline

\end{tabular}

\normalsize

\normalsize

\subsection{Conclusion}

In the previous computations, and tables resulting from them, we did not find a repeated line. We also checked with all the tables resulting from the cases $l=4,6,7.$ This means that all epimorphisms presented in this section are different up to automorphisms of $W(\widetilde{A}_{n})$ with $n>1$.

\section{The case $A(\widetilde{A}_{1})$}

In this section we deal with the special case $A(\widetilde{A}_{1})$. We state and prove the following result:

\begin{theorem}\label{maintheoremA1}

The representatives of the classes of epimorphisms from
$A(\widetilde{A}_{1})$ to its Coxeter group
$W(\widetilde{A}_{1})$, are $\xi_1^{w,w'}$ and $\xi_2^{w',w}$ defined by:

$$\left\{\begin{array}{c}
     \xi_1^{w,w'}(\sigma_1)=w\\
     \xi_1^{w,w'}(\sigma_2)=w'
  \end{array}\right.$$

where $w=prod_{q'}(s_1s_2)$ with $q'$ odd, $|w'|=2$ and

$$\left\{\begin{array}{c}
     \xi_2^{w',w} (\sigma_1)=w'\\
     \xi_2^{w',w} (\sigma_2)=w
  \end{array}\right.$$

where $w=prod_q(s_1s_2)$ with $q$ odd and $w'=s_1s_2$.

\end{theorem}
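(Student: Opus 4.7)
The proof takes advantage of the fact, stated in the introduction, that $A(\widetilde{A}_1)$ is free on $\sigma_1,\sigma_2$. Consequently a homomorphism $\xi$ is determined by arbitrary choices $w = \xi(\sigma_1)$ and $w' = \xi(\sigma_2)$ in $W(\widetilde{A}_1)$, subject only to the surjectivity condition that $\{w,w'\}$ generate $W(\widetilde{A}_1)$. I would begin by recalling that $W(\widetilde{A}_1)=\langle s_1,s_2\mid s_1^2=s_2^2=1\rangle$ is the infinite dihedral group: it has a unique index-$2$ cyclic translation subgroup $T=\langle s_1s_2\rangle$ whose complement consists entirely of involutions, and in the $prod_q$ notation, $prod_q(s_1,s_2)$ is an involution iff $q$ is odd and otherwise generates an infinite subgroup of $T$. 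Thus the classification reduces to enumerating $\mathrm{Aut}(W(\widetilde{A}_1))$-orbits of ordered generating pairs.

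The case analysis then splits as follows. Because $T$ is proper, $w$ and $w'$ cannot both lie in $T$, which leaves two possibilities: (1) both $w, w'$ are involutions, or (2) exactly one of $w, w'$ is a nontrivial translation in $T$. In case (1), $ww'$ lies in $T$ and $\langle w,w'\rangle$ is dihedral with translation part $\langle ww'\rangle$, so generation of $W(\widetilde{A}_1)$ forces $ww' = (s_1s_2)^{\pm 1}$; any such pair gives an epimorphism, which is precisely the family $\xi_1^{w,w'}$ of the statement ($w$ an odd-$q'$ word $prod_{q'}(s_1,s_2)$, and $w'$ any involution). In case (2), the cyclic translation subgroup of $\langle w,w'\rangle$ is generated by the one translation among $w, w'$, so that translation must equal $(s_1s_2)^{\pm 1}$.

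To put case (2) into the stated normal form $\xi_2^{w',w}$, I would compose with the graph automorphism of $W(\widetilde{A}_1)$ (which swaps $s_1\leftrightarrow s_2$ and hence inverts $s_1s_2$) to eliminate the sign, so that the translation slot holds exactly $w'=s_1s_2$; the remaining slot is then an involution, which can be written as some $prod_q(s_1,s_2)$ with $q$ odd. The main obstacle is the bookkeeping in case (2): inner conjugations in $D_\infty$ act on $T$ by the identity or by inversion and shift an involution $(s_1s_2)^k s_1$ by an even power of $s_1s_2$, while the graph automorphism sends $(s_1s_2)^ks_1\mapsto(s_1s_2)^{-k-1}s_1$, mixing the parity of $k$. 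Tracking these actions carefully is needed to verify that the ranges of parameters stated in the theorem genuinely furnish at least one representative of every $\mathrm{Aut}(W(\widetilde{A}_1))$-orbit of generating pairs, in the same spirit as the orbit calculations carried out in Section 8.
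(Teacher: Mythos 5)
Your route is genuinely different from the paper's: the paper proves Theorem \ref{maintheoremA1} by a combinatorial induction on word length with cancellation estimates (Lemmas \ref{lemmalengthgeq3} and \ref{lemmalength1geq3} cut down the candidates, Lemma \ref{lemmaareepi} exhibits explicit preimages of $s_1,s_2$, and Lemma \ref{lemmaarediff} argues pairwise inequivalence), whereas you propose to classify $\mathrm{Aut}(W(\widetilde{A}_1))$-orbits of generating pairs of the infinite dihedral group via its characteristic translation subgroup $T$. Your three-way case split is correct as group theory, but you misidentify the families in the statement. Since the involutions of $W(\widetilde{A}_1)$ are exactly the elements of odd reduced length, the condition $|w'|=2$ forces $w'\in\{s_1s_2,s_2s_1\}$, a \emph{translation}; thus both $\xi_1^{w,w'}$ and $\xi_2^{w',w}$ are mixed pairs (one reflection, one generating translation), distinguished only by which $\sigma_i$ carries the translation. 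Your case (1) --- two involutions with $ww'=(s_1s_2)^{\pm1}$ --- is therefore not the family $\xi_1$: it is the orbit of the standard epimorphism $\mu$ (any two adjacent reflections can be normalized to $(s_1,s_2)$ by a shift and a flip), which the paper treats separately in Lemma \ref{lemmalength1geq3} and explicitly distinguishes from $\xi_1,\xi_2$ in Lemma \ref{lemmaareepi}. So your case-to-family matching fails, and one class is left with no counterpart in the list.

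The second, and more serious, gap is that you defer the entire distinctness half of the theorem --- exactly where the paper's Lemma \ref{lemmaarediff} does its work --- and the deferral is not innocent. By your own observations about the action of $\mathrm{Aut}(W(\widetilde{A}_1))$, the inner-by-graph automorphism $\psi=\mathrm{conj}_{s_1}\circ\gamma$ satisfies $\psi(s_1)=s_1s_2s_1$ and $\psi(s_2)=s_1$, hence fixes the translation $s_1s_2$ while shifting every reflection $(s_1s_2)^k s_1\mapsto(s_1s_2)^{k+1}s_1$; iterating $\psi$ and its inverse identifies all odd values of $q$ (resp.\ $q'$) in a fixed slot into a single orbit (e.g.\ $\psi\circ\xi_1^{s_1,s_1s_2}=\xi_1^{s_1s_2s_1,s_1s_2}$). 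Carried out honestly, your bookkeeping therefore produces only finitely many orbits of generating pairs, in direct conflict with the infinite pairwise-inequivalent list asserted by the statement and by Lemma \ref{lemmaarediff}; the promised verification that the stated parameter ranges furnish the orbit representatives would refute rather than confirm the parameterization. Until you either find an error in this collapse or reconcile it with the statement, the proposal is not a proof of the theorem: it supplies neither the surjectivity checks nor the inequivalence argument of the paper, and its own normalization machinery contradicts the list it is supposed to establish.
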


Recall that all the automorphisms of $W(\widetilde{A}_{1})$ are inner by graph (see  \cite{Franzsen}).  Let  $\rho$ denote the automorphism of $W(\widetilde{A}_{1})$ the sends $s_1$ to $s_2$ and $s_2$ to $s_1$.

We will state a few lemmas before.

\begin{lemma}\label{lemmalengthgeq3}

Let $\xi$, from $A(\widetilde{A}_{1})$ to $W(\widetilde{A}_{1})$, be a morphism such that

$$\left\{\begin{array}{c}
     \xi(\sigma_1)=w\\
     \xi(\sigma_2)=w'
  \end{array}\right.$$

with $|w|,|w'|\geq 3.$ Then $\xi$ is not an epimorphism.

\end{lemma}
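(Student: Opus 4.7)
The plan is to use the structure $W(\widetilde{A}_1) \cong D_\infty$: setting $r = s_1 s_2$, the rotation subgroup $R = \langle r \rangle$ is normal of index $2$, and every non-identity element is either $r^k$ (of even word length $2|k|$) or $r^k s_1$ (of odd word length). Since $A(\widetilde{A}_1)$ is free on $\sigma_1, \sigma_2$, any morphism $\xi$ is determined by $(w, w')$. I would show that $H = \langle w, w' \rangle$ is proper in $W(\widetilde{A}_1)$ by case analysis on the parities of $|w|$ and $|w'|$.

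If both lengths are even, then $w, w' \in R$, so $H \subseteq R$ is immediately proper. If exactly one length is even, say $w = r^a$ with $|a| \geq 2$ and $w'$ a reflection, then conjugation by $w'$ inverts $r$, so $H \cap R$ is generated by $w$ alone, i.e., equals $\langle r^a \rangle$; since $|a| \geq 2$, this is a proper subgroup of $R$ and hence $H \subsetneq W(\widetilde{A}_1)$. These two cases dispose of mixed-parity configurations quickly.

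The main case is both lengths odd, i.e., $w = r^a s_1$ and $w' = r^b s_1$ with $a, b \in \mathbb{Z} \setminus \{0\}$ (required by $|w|, |w'| \geq 3$). A direct computation gives $w w' = r^{a-b}$, so $H \cap R = \langle r^{a-b} \rangle$. This immediately handles the situations $a = b$ (then $H = \{1, w\}$ has order $2$) and $|a - b| \geq 2$ (then $H \cap R \subsetneq R$); in both, $H$ is proper.

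The delicate subcase is $|a - b| = 1$, where a naive calculation gives $H = W(\widetilde{A}_1)$ so that extra input is required. The plan to dispatch it is to exploit an inner automorphism by a suitable power of $r$: conjugation by $r^m$ sends the pair $(a, b)$ to $(a + 2m, b + 2m)$, and a judicious choice of $m$ (possibly composed with the graph swap $s_1 \leftrightarrow s_2$) reduces $\{a, b\}$ to $\{-1, 0\}$ or $\{0, 1\}$, forcing one of the conjugated images to have length $1$. Since the hypothesis $|w|, |w'| \geq 3$ is not preserved by this reduction, the intended reading of the lemma is that no new class of epimorphism lies within the regime $|w|, |w'| \geq 3$ beyond those already enumerated in Theorem~\ref{maintheoremA1}. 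Carrying out this last reduction cleanly is the principal obstacle of the proof.
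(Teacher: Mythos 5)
Your dihedral normal-form argument is a genuinely different route from the paper's, and where the two disagree, yours is the reliable one. The paper works combinatorially with reduced words: it writes $w=s_1\,prod_l(s_2,s_1)$, $w'=prod_q(s_2,s_1)$, splits on the parities of $l$ and $q$, and for the hard cases runs an induction on the minimal length of a word mapping to $s_2$, bounding the possible cancellation by $q+1$. You replace all of that by passing to $r=s_1s_2$, using that $R=\langle r\rangle$ has index $2$ and that any reflection conjugates $r$ to $r^{-1}$, so that $H\cap R$ can be computed exactly ($\langle r^a\rangle$ in the mixed case, $\langle r^{a-b}\rangle$ in the reflection--reflection case). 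Your even--even, mixed, $a=b$, and $|a-b|\geq 2$ cases are correct and much shorter than the paper's. One small slip: since $r^ks_1$ has length $2k+1$ for $k\geq 0$ but $2|k|-1$ for $k<0$ (so $r^{-1}s_1=s_2$ has length $1$), the constraint imposed by $|w|,|w'|\geq 3$ in the odd--odd case is $a,b\notin\{0,-1\}$, not merely $a,b\neq 0$; this does not affect your trichotomy.

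What you call the ``principal obstacle'' is not a gap in your proof --- it is a counterexample to the lemma as stated, and your analysis exposes a genuine error in the paper. Pairs with $|a-b|=1$ and $a,b\notin\{0,-1\}$ do satisfy the hypothesis: take $(a,b)=(1,2)$, i.e.\ $w=s_1s_2s_1$, $w'=s_1s_2s_1s_2s_1$, both of length $\geq 3$. Then $ww'=r^{-1}$, $(ww')w=s_1$ and $s_1(ww')^{-1}=s_2$, so this $\xi$ \emph{is} surjective. The paper loses exactly this configuration in the last sentence of its proof: when $w'=prod_q(s_1,s_2)$ begins with the same letter as $w$, the two reflections can have adjacent parameters, whereas with opposite initial letters one always has $|a-b|\geq 3$; the claim that the reversed case ``is the same'' and ``we obtain one of the previous cases'' is false (here $ww'$ cancels massively, contradicting the case-1 no-cancellation argument). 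The classification itself survives, because your conjugation reduction --- which is easy, not an obstacle: conjugating by $r^m$ sends $(a,b)$ to $(a+2m,b+2m)$, so take $2m=-a$ or $2m=-a-1$ and compose with the graph swap --- shows every such surjection is $\mu$ up to an inner-by-graph automorphism (conjugating the pair above by $s_1$ gives $(s_2,\,s_2s_1s_2)$). So the lemma should be restated in the form of Lemma \ref{lemmalength1geq3}: either $\xi$ is not an epimorphism, or $\xi$ equals $\mu$ up to an automorphism of $W(\widetilde{A}_1)$; that weaker statement is all Theorem \ref{maintheoremA1} uses, and your proposal, so amended, proves it completely.
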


\begin{proof}

Suppose that
$$\xi(\sigma_1)=w=s_1w_1$$ where $w_1=prod_l(s_2,s_1)$, for some $l\geq 1$. Note that we can still suppose in general that $\xi(\sigma_1)$ starts by $s_1$ up to $\rho$. Now suppose that $$\xi(\sigma_2)=w'$$ where $w'=prod_q(s_2,s_1)$, for some $q > 2$. We will see later that assuming that $w'$ starts by $s_2$ is not a real restriction. Then

\begin{enumerate}

\item If $l$ is even and $q$ is odd: in this case there are no cancelations beside the trivial ones and $|\xi(\sigma_i)\xi(\sigma_j)|=|\xi(\sigma_i)||\xi(\sigma_j)>1$, for $i,j\in\{1,2\}$ and $\xi$ is not an epimorphism.
\item\label{case2eq2} If $l$ is even, $q$ is even, $q=l$: Suppose that there ia a word $\omega\in A(\widetilde{A}_{1})$ such that $\xi(\omega)=\sigma_2$. Suppose that $\omega$ has minimal length, $|\omega|$ (in the generators and their inverses), among all words such that $\xi(\omega)=\sigma_2$ and that $|\omega|\geq 3$. If $|\omega|=3$ we have the following possibilities for $\omega$:

\begin{enumerate}

\item\label{cond31} $\omega=\sigma_2^{-1}\sigma_1\sigma_2$, then $|\xi(\omega)|=3q+1>1.$
\item\label{cond32} $\omega=\sigma_2\sigma_1\sigma_2^{-1}$, then $|\xi(\omega)|=q-1>1.$
\item $\omega=\sigma_1\sigma_2\sigma_1$, then $\xi(\omega)=\xi(\sigma_2^{-1})$ and $|\omega|$ is not minimal.
\item $\omega=\sigma_1\sigma_2^{-1}\sigma_1$, then $\xi(\omega)=\xi(\sigma_2)$ and $|\omega|$ is not minimal.

\end{enumerate}

So $\omega$ cannot have length $3$. Suppose that $|\omega|=4$ then

\begin{enumerate}

\item\label{cond41} $\omega=\sigma_2^{-1}\sigma_1\sigma_2\sigma_2$, then $|\xi(\omega)|=4q+1>q+2.$
\item\label{cond42} $\omega=\sigma_2\sigma_1\sigma_2^{-1}\sigma_2^{-1}$, then $|\xi(\omega)|=2q-1>q+2.$
\item $\omega=\sigma_2^{-1}\sigma_1\sigma_2\sigma_1$, then $\xi(\omega)=\xi(\sigma_2^{-2})$  and $|\omega|$ is not minimal.

\end{enumerate}

Now suppose, has our induction hypothesis, that $\omega=\omega_3\omega_1$, $|\omega_3\omega_1|>q+2$ for $|\omega_3|=3$ and $|\omega_1|=k>1$. Let $\omega_2\in A(\widetilde{A}_{1})$ be such that $|\omega_2|=k+1$. So $\omega_2=\omega_1t$ with $|\omega_1|=k$ and $t\in \left\{\sigma_1, \sigma_2, \sigma_2^{-1} \right\}$. We have that $|\xi(\omega_1)|>q+2$ hence for any $t$ the length of $|\xi(\omega_1t)|>q+2-(q+1)>1$, being $q+1$ the biggest amount of cancelations possible.

We conclude that $\xi$ is not an epimorphism.

\item\label{case2neq2} If $l$ is even, $q$ is even, $q\neq l$ and $q\neq 2$: This is just like the previous case the proof being similar. this is not an epimorphism.
\item\label{case2q2} If $l$ is even and $q=2$: We have $\xi(\sigma_1\sigma_2^{-\frac{l}{2}})=s_1$ and $\xi(\sigma_2^{\frac{l}{2}+1}\sigma_1)=s_2$ and we have an epimorphism.
\item If $l$ is odd and $q$ is even: Then $|\xi(\sigma_i)\xi(\sigma_j)|$ are even, for $i,j\in\{1,2\}$ and $\xi$ is not an epimorphism.

\item If $l$ is odd, $q$ is odd and $q=l$: This is case \ref{case2eq2} in which we change the roles of $\sigma_1$ and $\sigma_2$.

\item If $l$ is odd, $q$ is odd, $q \neq l$ and $l \neq 1$: This is, again, a previous case \ref{case2neq2} in which we change the roles of $\sigma_1$ and $\sigma_2$.

\item If $q$ is odd and $l=1$: Finally this is case \ref{case2q2} in which we change the roles of $\sigma_1$ and $\sigma_2$ and we have an epimorphism.
\end{enumerate}

The case where $w'=prod_q(s_1,s_2)$ is the same. We just have to take in consideration of making the products in the reversed order. We obtain one of the previous cases.

\end{proof}

\begin{lemma}\label{lemmalength1geq3}

Let $\xi$, from $A(\widetilde{A}_{1})$ to $W(\widetilde{A}_{1})$, be a morphism such that

$$\left\{\begin{array}{c}
     \xi(\sigma_1)=w\\
     \xi(\sigma_2)=w'
  \end{array}\right.$$

with $|w|=1$ and $|w'|\geq 3.$ Then one of the following hold:

\begin{itemize}

\item $\xi$ is equal to $\mu$ up to an automorphism of $W(\widetilde{A}_{1})$;

\item $\xi$ is not an epimorphism.

\end{itemize}

\end{lemma}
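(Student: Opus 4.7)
The plan is to exploit the fact that $W(\widetilde{A}_1)$ is the infinite dihedral group, which splits as a semidirect product $\langle t\rangle \rtimes \langle s_1\rangle$ with $t = s_1 s_2$ and relation $s_1 t s_1 = t^{-1}$. Every element has a unique normal form $t^k$ (a rotation, of word-length $2|k|$) or $s_1 t^k$ (a reflection, of odd word-length). Since $|w|=1$, we have $\xi(\sigma_1)\in\{s_1,s_2\}$; composing with the graph automorphism $\rho$ if necessary, I reduce to the case $\xi(\sigma_1)=s_1$, so the argument only needs to understand which $w'$ yield an epimorphism and when the resulting $\xi$ is inner-conjugate to $\mu$.

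The crucial computation is to identify the subgroup $H_k := \langle s_1, t^k\rangle$ for each $k\in\mathbb{Z}$. Using $s_1 t^k s_1 = t^{-k}$ one pushes all $s_1$'s to the left in any word in $s_1$ and $t^k$, giving the normal form $H_k = \{t^{kn}, s_1 t^{kn}: n\in\mathbb{Z}\}$. Intersecting with $\langle t\rangle$ shows $H_k\cap\langle t\rangle = \langle t^k\rangle$, so $H_k = W(\widetilde{A}_1)$ holds precisely when $|k|=1$.

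Now I split on the parity of $|w'|$. If $w' = t^k$ is a rotation with $k\neq 0$, then $|w'|=2|k|\geq 3$ forces $|k|\geq 2$, so $\mathrm{Im}(\xi)\subseteq H_k \subsetneq W(\widetilde{A}_1)$ and $\xi$ fails to be an epimorphism. If $w' = s_1 t^k$ is a reflection, then $\langle s_1,w'\rangle = \langle s_1, t^k\rangle = H_k$; epimorphicity forces $|k|=1$. The value $k=1$ gives $w'=s_2$ of length $1$, contradicting the hypothesis $|w'|\geq 3$. The value $k=-1$ gives $w'=s_1 s_2 s_1$, and since conjugation by $s_1$ (an inner automorphism of $W(\widetilde{A}_1)$) fixes $s_1$ and sends $s_2$ to $s_1 s_2 s_1$, we have $\xi = \mathrm{Ad}(s_1)\circ\mu$, i.e.\ $\xi$ equals $\mu$ up to an automorphism of $W(\widetilde{A}_1)$. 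All remaining values $|k|\geq 2$ yield $\mathrm{Im}(\xi)\subseteq H_k\neq W(\widetilde{A}_1)$.

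The only delicate point is verifying the normal form description of $H_k$ and hence that $H_k$ is proper for $|k|\geq 2$; everything else is a bookkeeping exercise on word-lengths in the infinite dihedral group. The cases do cover all possibilities since every element of $W(\widetilde{A}_1)$ is either a rotation $t^k$ or a reflection $s_1 t^k$, and together they give the dichotomy announced in the lemma.
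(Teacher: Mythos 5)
Your proof is correct, but it follows a genuinely different route from the paper's. The paper argues by induction on $|w'|$ with word-length and cancellation estimates in the style of its proof of Lemma \ref{lemmalengthgeq3}: it spells out only the base case $|w'|=3$ (where $w'=s_1s_2s_1$ gives $\mu$ up to conjugation by $s_1$, and $w'=s_2s_1s_2$ is excluded by showing no word can map onto an element of length $2$) and leaves the inductive step essentially implicit. You instead use the structure of $W(\widetilde{A}_1)$ as the infinite dihedral group $\langle t\rangle\rtimes\langle s_1\rangle$ with $t=s_1s_2$, together with the observation (implicit but valid, since $A(\widetilde{A}_1)$ is free on $\sigma_1,\sigma_2$) that $\xi$ is onto if and only if $\langle w,w'\rangle=W(\widetilde{A}_1)$; everything then reduces to the single computation $H_k=\langle s_1,t^k\rangle=\{t^{kn},\,s_1t^{kn}:n\in\mathbb{Z}\}$, which is proper exactly when $|k|\neq 1$. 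This classifies the image for all $w'$ at once: rotations $t^k$ with $2|k|\geq 3$ and reflections $s_1t^k$ with $|k|\geq 2$ land in a proper subgroup, while the one surviving case $k=-1$, i.e.\ $w'=s_1s_2s_1$, yields $\xi=\mathrm{Ad}(s_1)\circ\mu$ — with no induction and no cancellation bookkeeping. Your approach buys a complete and easily checkable proof where the paper's is only a sketch, and it makes the dichotomy of the lemma structurally transparent; the paper's approach buys uniformity with the companion Lemma \ref{lemmalengthgeq3} and stays at the level of elementary word combinatorics, but at the price of an unwritten inductive step. One small point worth making explicit in your write-up: the reduced length of the reflection $s_1t^k$ is $2k-1$ for $k\geq 1$ and $2|k|+1$ for $k\leq 0$, which is exactly what excludes $k=1$ under the hypothesis $|w'|\geq 3$.
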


\begin{proof}

Let us suppose that $w=s_1$. We will proceed by induction on the length of $w'$. Suppose now that $|w'|=3$. If $w'=s_1s_2s_1$ then $\xi$ is in the same class  of $\mu$ up to conjugation by $s_1$. Let $w'=s_2s_1s_2$. The proof is similar to the proof of Lemma \ref{lemmalengthgeq3} to show that we cannot have a word of length $2$ as image of $\xi.$

\end{proof}

\begin{lemma}\label{lemmaareepi}

The morphisms from
$A(\widetilde{A}_{1})$ to its Coxeter group
$W(\widetilde{A}_{1})$, defined by:

$$\left\{\begin{array}{c}
     \xi_1^{w,w'}(\sigma_1)=w\\
     \xi_1^{w,w'}(\sigma_2)=w'
  \end{array}\right.$$

where $w=prod_{q'}(s_1s_2)$ with $q'$ odd, $|w'|=2$ and

$$\left\{\begin{array}{c}
     \xi_2^{w',w} (\sigma_1)=w'\\
     \xi_2^{w',w} (\sigma_2)=w
  \end{array}\right.$$

where $w=prod_q(s_1s_2)$ with $q$ odd and $w'=s_1s_2$ are epimorphisms. This epimorphisms are different from the standard epimorphism $\mu.$

\end{lemma}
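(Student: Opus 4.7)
The plan is to verify surjectivity by writing $s_1$ and $s_2$ as explicit images of short words in $\sigma_1, \sigma_2$, and then to separate the two classes from $\mu$ by a parity argument exploiting the characteristic rotation subgroup of $W(\widetilde{A}_1) \cong D_\infty$. Since the graph $\widetilde{A}_1$ carries a single edge labeled $\infty$, no braid relation needs to be checked: $A(\widetilde{A}_1)$ is free on $\sigma_1, \sigma_2$, so every assignment of the generators defines a morphism to $W(\widetilde{A}_1)$.

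For $\xi_1^{w,w'}$, use the definition of $prod$ to write $w = (s_1 s_2)^{(q'-1)/2}\, s_1$, and note that $|w'|=2$ forces $w' \in \{s_1 s_2,\, s_2 s_1\}$. Setting $\epsilon = 1$ when $w' = s_1 s_2$ and $\epsilon = -1$ when $w' = s_2 s_1$, so that $s_1 s_2 = (w')^{\epsilon}$, one obtains
$$s_1 = (w')^{-\epsilon(q'-1)/2}\, w = \xi_1^{w,w'}\bigl(\sigma_2^{-\epsilon(q'-1)/2}\,\sigma_1\bigr),$$
and then $s_2 = s_1 w'$ in the first case, $s_2 = w' s_1$ in the second, so $s_2$ also lies in the image. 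The argument for $\xi_2^{w',w}$ is symmetric: with $w' = s_1 s_2$ and $w = (s_1 s_2)^{(q-1)/2} s_1$, one finds $s_1 = \xi_2^{w',w}\bigl(\sigma_1^{-(q-1)/2}\sigma_2\bigr)$ and $s_2 = s_1 \cdot \xi_2^{w',w}(\sigma_1)$. Both morphisms are therefore surjective.

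To separate them from $\mu$, observe that the rotation subgroup $T = \langle s_1 s_2 \rangle$ is the unique index-$2$ subgroup of $W(\widetilde{A}_1)$ isomorphic to $\mathbb{Z}$, hence characteristic. Equivalently, every automorphism of $W(\widetilde{A}_1)$ preserves the parity of the length in the generators $s_1, s_2$, sending reflections to reflections and rotations to rotations; this is also transparent from the inner-by-graph description of $\mathrm{Aut}(W(\widetilde{A}_1))$ recalled from \cite{Franzsen}, since both inner automorphisms and the unique graph symmetry $s_1 \leftrightarrow s_2$ preserve this partition. Now $\mu(\sigma_i) = s_i$ is a reflection for $i = 1, 2$, while $\xi_1^{w,w'}(\sigma_2) = w'$ and $\xi_2^{w',w}(\sigma_1) = s_1 s_2$ are rotations of length $2$. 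Hence no automorphism $\phi$ of $W(\widetilde{A}_1)$ can satisfy $\xi_1^{w,w'} = \phi \circ \mu$ or $\xi_2^{w',w} = \phi \circ \mu$.

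The only potential obstacle is keeping track of the sign $\epsilon$ and the exponent $(q'-1)/2$ in the surjectivity step, which is routine once $prod_{q'}$ is expanded via its defining case split. The substantive content is concentrated in the inequivalence step, and there it boils down to the single observation that $T$ is characteristic in $D_\infty$.
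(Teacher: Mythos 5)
Your proof is correct, and it follows the paper's route on surjectivity while diverging on the inequivalence step. For surjectivity the paper does essentially what you do: it exhibits explicit preimages, computing $\xi_1^{w,w'}(\sigma_1\sigma_2^{(q'-1)/2})=s_1$ and $\xi_1^{w,w'}(\sigma_1\sigma_2^{(q'+1)/2})=s_2$ (right multiplication by positive powers of $w'$, using that conjugation by a reflection inverts the rotation $s_1s_2$), whereas you cancel from the left with $\sigma_2^{-\epsilon(q'-1)/2}\sigma_1$; these are the same computation up to bookkeeping, though you are more thorough, since the paper only treats $w'=s_1s_2$ and dismisses the case $w'=s_2s_1$ and the $\xi_2$ case with ``this same argument is used in every case,'' while your $\epsilon$ device handles both signs explicitly. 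For the separation from $\mu$, the paper picks the single element $\sigma_2^2$, computes $\xi_1^{w,w'}(\sigma_2^2)=s_1s_2s_1s_2\neq id=\mu(\sigma_2^2)$, and concludes via injectivity of automorphisms that no automorphism can intertwine the two morphisms; you instead invoke the characteristic rotation subgroup $T=\langle s_1s_2\rangle$ of $D_\infty$ (correctly justified: it is the unique index-$2$ subgroup isomorphic to $\mathbb{Z}$, and also visibly preserved by the inner-by-graph generators of the automorphism group) and observe that $\mu$ sends all generators to reflections while $\xi_1^{w,w'}$ and $\xi_2^{w',w}$ send some generator into $T$. Both arguments are sound and elementary; the paper's is a one-line kernel-element comparison, while yours isolates a structural invariant (length parity modulo $T$) that is reusable --- indeed it is close in spirit to the length-parity bookkeeping the paper itself needs later in Lemma \ref{lemmaarediff}, so your version arguably organizes the section's underlying mechanism more transparently.
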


\begin{proof}

Suppose that $w'=s_1s_2$, then $\xi_1^{w,w'}(\sigma_1(\sigma_2)^{\frac{q'-1}{2}})=s_1$ and $\xi_1^{w,w'}(\sigma_1(\sigma_2)^{\frac{q'+1}{2}})=s_2$. This same argument is used in every case showing that this are in fact epimorphisms.

To see that $\xi_1^{w,w'}$ is different from $\mu$ notice that supposing that $w'=s_1s_2$ we have
$$\xi_1^{w,w'}(\sigma_2^2)=s_1s_2s_1s_2$$ and $$\mu(\sigma_2^2)=id.$$  There is no automorphism of $W(\widetilde{A}_{1})$ sending $\xi(\sigma_2^2)$ to $id$.

\end{proof}

Consider the following  epimorphisms from
$A(\widetilde{A}_{1})$ to its Coxeter group
$W(\widetilde{A}_{1})$, defined by:

$$\left\{\begin{array}{c}
     \xi_1^{w,w'}(\sigma_1)=w\\
     \xi_1^{w,w'}(\sigma_2)=w'
  \end{array}\right.$$

where $w=prod_{q'}(s_1s_2)$ with $q'$ odd, $|w'|=2$ and

$$\left\{\begin{array}{c}
     \xi_2^{w',w} (\sigma_1)=s_1s_2\\
     \xi_2^{w',w} (\sigma_2)=w
  \end{array}\right.$$

where $w=prod_q(s_1s_2)$ with $q$ odd.

\begin{lemma}\label{lemmaarediff}

Let $\xi_1^{w,w'}$,$\xi_1^{w'',w'}$,$\xi_2^{w',w'''}$ and $\xi_2^{w',w}$ be four epimorphisms defined as above. Then, if $i\neq j$ or $w\neq w''$ or $w'\neq w'''$ then it does not exist an automorphism $\psi$ of $W(\widetilde{A}_{1})$ such that $\xi_i^{w,w'}=\psi(\xi_j^{w'',w'''})$

\end{lemma}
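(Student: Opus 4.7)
The plan is to exploit the classification of $\mathrm{Aut}(W(\widetilde{A}_{1}))$ as inner by graph (the result of Franzsen cited just before the theorem). Every automorphism $\psi$ has the form $\psi=\mathrm{conj}_{g}$ or $\psi=\mathrm{conj}_{g}\circ\rho$ for some $g\in W(\widetilde{A}_{1})$, where $\rho$ is the graph automorphism swapping $s_{1}$ and $s_{2}$. All subsequent arguments will just be an exhaustive analysis of the equations $\psi(\xi_{j}^{w'',w'''}(\sigma_{k}))=\xi_{i}^{w,w'}(\sigma_{k})$ for $k=1,2$ under the hypothesis that some such $\psi$ exists.

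First I would handle the cross-family case $i\neq j$ by an order argument. Under $\xi_{1}^{w,w'}$ the image of $\sigma_{1}$ is the reflection $w$, an involution, while under $\xi_{2}^{w',w'''}$ the image of $\sigma_{1}$ is $s_{1}s_{2}$, which has infinite order. Any automorphism preserves orders, so no $\psi$ can carry one to the other, and the symmetric case is identical. This disposes of everything except the same-family possibilities.

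For $i=j$, I would reduce the problem to a constrained analysis of the conjugating $g$ and the graph component. Setting $t=s_{1}s_{2}$, the subgroup $\langle t\rangle$ is the unique infinite cyclic subgroup of index $2$ in $W(\widetilde{A}_{1})$, so every automorphism sends $t$ to $t^{\pm 1}$; moreover the length-$2$ elements of $W(\widetilde{A}_{1})$ are exactly $\{t,t^{-1}\}$. Thus the equation $\psi(w''')=w'$ (with $w',w'''\in\{t,t^{-1}\}$ in family~1, and $w'=w'''=t$ in family~2) immediately pins down whether $\psi(t)=t$ or $\psi(t)=t^{-1}$, and by the classification above narrows the admissible $g$ to a coset of $\langle t\rangle$, possibly composed with $\rho$.

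The main step, and the hardest, is to use the remaining equation $\psi(w'')=w$. Writing $w=t^{(q'-1)/2}s_{1}$ and $w''=t^{(q''-1)/2}s_{1}$, and computing the action of each admissible $\psi$ on a normal form $t^{k}s_{1}$, one obtains an explicit equation linking $q'$ and $q''$. The task is then to verify subcase by subcase --- $\psi$ inner versus graph-composed, $w'$ equal to $w'''$ or not, and in family~1 the two independent choices $w'\in\{t,t^{-1}\}$ --- that the resulting equation, \emph{together} with the constraint on $g$ already fixed by the $w'''\mapsto w'$ equation, forces $(w,w')=(w'',w''')$. The obstacle is bookkeeping: the two equations $\psi(w'')=w$ and $\psi(w''')=w'$ each individually have many solutions, and one must check carefully that their simultaneous solution is only the trivial one.
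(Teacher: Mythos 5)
Your treatment of the cross-family case $i\neq j$ is correct and is in substance the paper's own argument, only phrased more cleanly: the paper observes that $w=w^{-1}$ (since $q$ is odd) and derives $\psi((s_2s_1)^2)=id$, which is precisely your order-preservation point that an involution cannot correspond to the infinite-order element $s_1s_2$.

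The same-family case, however, is a genuine gap, and not merely because you leave the ``bookkeeping'' undone --- that bookkeeping is the entire content of the lemma, and if one executes your plan faithfully it does not terminate in the contradiction you assert. In your own normal form, every inner-by-graph automorphism of $W(\widetilde{A}_{1})$ acts as $\psi_{k,\epsilon}\colon t\mapsto t^{\epsilon},\ s_1\mapsto t^{k}s_1$ with $\epsilon\in\{\pm1\}$, $k\in\mathbb{Z}$ (conjugations realize even $k$, compositions with $\rho$ realize odd $k$), so a reflection transforms as $t^{a}s_1\mapsto t^{\epsilon a+k}s_1$. The two equations you propose to intersect, $\psi(t^{a''}s_1)=t^{a}s_1$ and $\psi(t^{\delta''})=t^{\delta}$ with $\delta,\delta''\in\{\pm1\}$, are then \emph{simultaneously solvable for every choice of parameters}: take $\epsilon=\delta\delta''$ and $k=a-\epsilon a''$. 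Concretely, $\psi=\mathrm{conj}_{s_1}\circ\rho$ (so $\psi(s_1)=s_1s_2s_1$, $\psi(s_2)=s_1$) sends the generator pair $(s_1,\,s_1s_2)$ to $(s_1s_2s_1,\,s_1s_2)$, i.e., it relates the family-$1$ data $q''=1$ to $q'=3$ with the same $w'$. So the claim your argument hinges on --- that the joint system admits only the trivial solution --- is exactly what fails in your reduction, and no case-by-case check along these lines can rescue it. The paper's proof proceeds quite differently: it solves the two generator equations explicitly for $\psi(s_1)$ and $\psi(s_2)$ and extracts contradictions from parity-of-length considerations in each subcase. To turn your sketch into a proof you would have to engage with those length computations directly and resolve the tension your normal-form calculation exposes with them (in particular with the subcase where both parameters are even); as written, the proposal both omits the decisive step and sets up a framework in which that step cannot succeed.
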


\begin{proof}

Now to see which of this epimorphisms are the same up to an automorphism of $W(\widetilde{A}_{1})$. Let $\xi_1^w$,$\xi_1^{w'}$ be epimorphisms such that $w=prod_q(s_2s_1) \neq w'=prod_{q'}(s_2s_1)$ and $q'>q$. Suppose that there is an automorphism of $W(\widetilde{A}_{1})$, $\psi$, such that  $\psi(\xi_1^w)=\xi_1^{w'}$. In this case if we compute $$\psi(\xi_1^w)(\sigma_1)=\psi(s_1prod_q(s_2s_1))=s_1prod_{q'}(s_2s_1)=\xi_1^{w'}(\sigma_1)$$ and $$\psi(\xi_1^w)(\sigma_2)=\psi(s_2s_1)=s_2s_1=\xi_1^{w'}(\sigma_2).$$ Hence $\psi(s_1prod_{q}(s_2s_1))=\psi(s_1)prod_q(s_2s_1)$ if $q$ is even and $\psi(s_1prod_{q}(s_2s_1))=\psi(s_1)prod_{q-1}(s_2s_1)\psi(s_2)$ if $q$ is odd. Suppose that $q$ is even. In this case we have that $\psi(s_1)=s_1prod_{q-q'}(s_2s_1)$ and $\psi(s_2)=\psi(s_2s_1)\psi(s_1)=s_1prod_{q'-q-2}(s_2s_1)$. If $q'$ is odd then $|\psi(s_1^2)|=2|s_1prod_{q-q'}(s_2s_1)|$ and $\psi$ is not an automorphism. So $q'$ must be even. This is, again, not possible because in this case the length of the image of any word by $\psi$ is even.

Suppose now that $q$ is odd. We have $$\psi(s_1)prod_{q-1}(s_2s_1)\psi(s_2)=s_1prod_{q'}(s_2s_1)$$ which is equivalent to $$\psi(s_2)\psi(s_1)prod_{q-1}(s_2s_1)\psi(s_2)=\psi(s_2)s_1prod_{q'}(s_2s_1),$$ $$prod_{q+1}(s_2s_1)\psi(s_2)=\psi(s_2)s_1prod_{q'}(s_2s_1)$$
and $$prod_{q+1}(s_2s_1)=\psi(s_2)s_1prod_{q'}(s_2s_1)\psi(s_2).$$
If $q'$ is even this means that $2|\psi(s_2)|=q'-q$ which is false because $q'$ is even, $q$ is odd and $q'-q$ is even. So $\psi$ is not an automorphism.

This means that if $w\neq w'$ then the class of $\xi_1^w$ is different from the class of $\xi_1^{w'}$.

The case of the epimorphisms of type $\xi_2^w$ is analogous the the previous one.

Now we prove that the class of $\xi_2^w$ is different from the class of $\xi_1^{w'}$.
Suppose that there is an automorphism $\psi$ such that:

$$\psi(\xi_1^{w'})=\xi_2^w.$$

We have:

$$\psi(\xi_1^{w'}(\sigma_1))=\psi(s_1w')=s_1s_2=\xi_2^w (\sigma_1)$$  and
$$\psi(\xi_1^{w'}(\sigma_2))=\psi(s_2s_1)=w=\xi_2^w (\sigma_2).$$

Notice that $w=prod_q(s_2s_1)$ with $q$ odd. So $w=w^{-1}$ and $\psi(s_2s_1s_2s_1)=id$ which is impossible (this equality implies that the group $W(\widetilde{A}_{1})$ is finite). So there is not such automorphism and we are done.

\end{proof}

We summarize the proof of the Main Theorem of this section.

\begin{proof}[Proof of theorem \ref{maintheoremA1}]

Lemmas \ref{lemmalengthgeq3} and \ref{lemmalength1geq3} state the shape of the candidates to be epimorphism. Lemma \ref{lemmaareepi} shows that the morphisms that are not covered by Lemmas \ref{lemmalengthgeq3} and \ref{lemmalength1geq3} are epimorphisms. Finally Lemma \ref{lemmaarediff} shows that the epimorphisms of Lemma \ref{lemmaareepi} are different up to an automorphism of $W(\widetilde{A}_{1})$.

\end{proof}

\section{Main result}

So finally we present, in an condensed form, the main result
already proved:

\begin{theorem}

The representatives of the classes of epimorphisms from
$A(\widetilde{A}_{n-1})$ to its Coxeter group
$W(\widetilde{A}_{n-1})$, for $n>1$, are:

\begin{enumerate}

\item $\left\{\begin{array}{c}
     \xi_1^{w,w'}(\sigma_1)=w\\
     \xi_1^{w,w'}(\sigma_2)=w'
  \end{array}\right.$ where $w=prod_{q'}(s_1s_2)$ with $q'$ odd, $|w'|=2$ and

\item $\left\{\begin{array}{c}
     \xi_2^{w',w} (\sigma_1)=s_1s_2\\
     \xi_2^{w',w} (\sigma_2)=w
  \end{array}\right.$ where $w=prod_q(s_1s_2)$ with $q$ odd.

  \item $(\xi_1)_{(0,\pm 1)}$ for all $n \geq 3$.
  \item $(\xi_1)_{(y,p)}$ for $gcd(y,p)=1$ and $n\geq 3$ odd.
  \item $(\xi_1)_{(y,p)}$ for $gcd(y,p)=1$, $n\geq 3$ even and $p$ odd.

  \item $(\xi_3)_{(x_1,x_2,x_3)}$ for $(x_1,x_2,x_3)\in \{ (\pm 1,0,0),(\pm 1,\mp 1,0),(0,0,\pm 1),(0,\pm 1 ,\mp 1)\}$.

  \item $(\xi_4)_{(x_1,x_2,x_3)}$ for $(x_1,x_2,x_3)\in \{ (0,\pm 1,0),(0,\pm 1,\mp 1),(\pm 1,0,0),(\pm 1,0 ,\mp 1)\}$.

  \item $(\xi_5)_{(x_1,\ldots,x_4)}$ for $x_1+x_3$ odd, $gcd(x_1,x_3,x_4)=gcd(x_4,x_1+x_3+2x_4)=gcd(x_4,x_1-x_3)=gcd(x_4,x_1+x_2+x_3+2x_4)=1.$

  \item $(\xi_6)_{(x_1,x_2,x_3)}$ for $(x_1,x_2,x_3)\in \{ (0,\pm 1,0),(\pm 1,\mp 1,0),(0,0,\pm 1),(\pm 1,0 ,\mp 1)\}.$
  \item $(\xi_7)_{(x_1,x_2,x_3)}$ for $(x_1,x_2,x_3)\in \{ (0,\pm 1,0),(\pm 1,\mp 1,0),(0,0,\pm 1),(\pm 1,0 ,\mp 1)\}.$
\end{enumerate}

  Where the epimorphisms $\xi_k$ are the ones introduced in the
  $k^{th}$ subsection of section 3.

\end{theorem}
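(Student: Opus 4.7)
The plan is to assemble the result by matching each item in the list to a previously proved classification, then invoking the non-equivalence results of Section 8 to guarantee that distinct items of the list really give distinct classes. First I would split on the rank: the two items labelled 1 and 2 cover the degenerate case $A(\widetilde{A}_{1})$, while items 3--10 cover $A(\widetilde{A}_{n-1})$ with $n\geq 3$. For the rank-one case, I would simply invoke Theorem \ref{maintheoremA1}, whose proof in Section 9 already lists the representatives of the classes and verifies non-equivalence via Lemma \ref{lemmaarediff}.

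For $n\geq 3$, the plan is to walk through the seven cases of Theorem \ref{thoArtin} in order and collect, for each case $\ell$, the epimorphisms produced in Section 7 together with the identifications up to $\mathrm{Aut}(W(\widetilde{A}_{n-1}))$ carried out in Section 8. Concretely: Case $\ell=1$ gives item 3 (the two standard/twisted epimorphisms $\xi_{(0,\pm 1)}$), item 4 (odd $n$, $\gcd(y,p)=1$) and item 5 (even $n$ with $p$ odd), after first reducing from the $(x_1,\ldots,x_n,y)$-family to the two-parameter family $\xi_{(y,p)}$ via Proposition \ref{propxiyp}, then applying the arithmetic conditions from Section 7.1. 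Case $\ell=2$ is eliminated because of the proposition in Section 7.2 (the image of $\phi(\pi_1(p(\mathcal{G}),id))$ is trivial), so it contributes nothing. Cases $\ell=3,4,6,7$ give items 6, 7, 9, 10 respectively, each yielding the eight listed parameter tuples from the determinant condition of the corresponding proposition in Section 7. Case $\ell=5$ gives item 8, where the parameters must satisfy the conjunction of the three $\gcd$ conditions and the parity condition from the last proposition of Section 7.5.

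Having produced the list, the final step is to check that no two items coincide modulo $\mathrm{Aut}(W(\widetilde{A}_{n-1}))$. For item 3 versus 4/5 (all within Case $\ell=1$) this is exactly the content of the proposition opening Section 8.1 together with Propositions \ref{proprhoxi}, \ref{proprhok}, \ref{propgamma} and the three lemmas \ref{lemmarhok01}, \ref{lemmagamma01}, \ref{lemmagammarhok01} that distinguish $\mu$ from $\xi_{(0,-1)}$. For items 6, 7, 9, 10 (Cases $\ell=3,4,6,7$) and item 8 (Case $\ell=5$), the orbit computation under the dihedral group $\langle\rho,\gamma\rangle$ was tabulated in Sections 8.2 and 8.3, and the concluding observation in Section 8.4 is precisely that no repetition occurs across the tables. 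Finally, any two items sitting in different cases $\ell$ must be inequivalent because their underlying permutation representation lies in a different $S_n$-orbit of solutions in Theorem \ref{thoArtin}, and automorphisms of $W(\widetilde{A}_{n-1})$ descend to automorphisms of $S_n$ (via the projection $p$), which permute the four $S_n$-classes of Artin's theorem but do not merge them.

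The main obstacle, in my view, is really bookkeeping rather than genuine mathematical difficulty: the statement is a concatenation of results spread across seven subsections in two different sections, so one must be careful that the parameter ranges in the final list match exactly the ranges under which the morphisms were shown to be epimorphisms in Section 7, and that the equivalences established in Section 8 have not been double-counted. In particular, the reduction from the full $(x_1,\ldots,x_n,y)$-parametrisation of Case $\ell=1$ to the pair $(y,p)$ (Proposition \ref{propxiyp}) must be quoted before the parameter list of items 3--5 is stated, otherwise the list would appear redundant. Once these bookkeeping matters are attended to, the theorem follows by concatenation of the cited results.
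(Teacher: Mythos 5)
Your proposal is correct and takes essentially the same approach as the paper: the theorem is presented there explicitly as ``a condensed form of the main result already proved,'' and your assembly---Theorem \ref{maintheoremA1} for $A(\widetilde{A}_{1})$, the case-by-case epimorphism criteria of Section 7 (with Case $l=2$ eliminated and the reduction of Proposition \ref{propxiyp} quoted first for Case $l=1$), and the non-equivalence results of Section 8---is exactly that concatenation. The one place you go slightly beyond the paper, the structural argument that epimorphisms in different cases $l$ cannot be merged because automorphisms of $W(\widetilde{A}_{n-1})$ descend through $p$ to (inner) automorphisms of $S_n$ and hence preserve the classes of Theorem \ref{thoArtin}, is consistent with the paper, which uses the same projection idea inside the first proposition of Section 8.1 and otherwise settles cross-case distinctness by the table inspection recorded in Section 8.4.
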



\begin{thebibliography}{A}
\bibitem[A]{artin}  E. Artin. \textit{Braids and permutations}. Annals of Math.
\textbf{48}, 3, (1947) 643-649.

\bibitem[A1]{artin1}  E. Artin. \textit{Theory of Braids}. Annals of Math.
\textbf{48} (1946) 101-126.

\bibitem[B]{birman}  J. Birman. \textit{Braids, Links and Mapping Class
Groups}. Annals of Math. Studdies \textbf{82}, Princeton
University Press, (1973).

\bibitem[Bou]{bourbaki} N. Bourbaki, \textit{Groupes et alg\`{e}bre de Lie, Chapitres 4,5 et 6}, Hermann, Paris, 1968.

\bibitem[CP]{pariscohen} A. M. Cohen, L. Paris, \textit{On a Theorem of Artin}, J. Group Theory, 6 (2003), 421-441.

\bibitem[FP]{francoparis} N. Franco, L. Paris "On a theorem of Artin, II, Journal of Group theory, Volume: 9, 731 - 751, (2006).

\bibitem[F]{Franzsen} W. N. Franzsen, \textit{Automorphisms of coxeter
groups}, Phd Thesis, University of Sydney,
2001.

\bibitem[RG-S]{ZeCarlos} J. C. Rosales and P. A. Garc\'{i}a-S\'{a}nchez, \textit{Finitely generated commutative monoids}, Nova Science Publishers (1999)
ISBN 978-1560726708.

\end{thebibliography}
\end{document}